\definecolor{dmagenta}{rgb}{.4,.1,.5}
\definecolor{dblue}{rgb}{.0,.0,.5}
\definecolor{mblue}{rgb}{.0,.0,.8}
\definecolor{ddblue}{rgb}{.0,.0,.4}
\definecolor{dred}{rgb}{.6,.0,.0}
\definecolor{dgreen}{rgb}{.0,.5,.0}
\definecolor{Eeom}{rgb}{.0,.0,.5}
\newtheorem{lemma}{Lemma}[section]
\newtheorem{theorem}{Theorem}[section]
\newtheorem{proposition}{Proposition}[section]
\newtheorem{corollary}{Corollary}[section]
\theoremstyle{definition}
\newtheorem{definition}{Definition}[section]
\newtheorem{assumption}{Assumption}[section]
\newtheorem{example}{Example}[section]
\newtheorem{remark}{Remark}[section]
\numberwithin{equation}{section}
\crefname{section}{Section}{Sections}
\crefname{subsection}{Subsection}{Subsections}
\crefname{condition}{Condition}{Conditions}
\crefname{hypothesis}{Hypothesis}{Conditions}
\crefname{assumption}{Assumption}{Assumptions}
\crefname{lemma}{Lemma}{Lemmas}
\crefname{claim}{Claim}{Claims}
\Crefname{figure}{Figure}{Figures}
\DeclareRobustCommand\widecheck[1]{{\mathpalette\@widecheck{#1}}}
\def\@widecheck#1#2{%
    \setbox\z@\hbox{\m@th$#1#2$}%
    \setbox\tw@\hbox{\m@th$#1%
       \widehat{%
          \vrule\@width\z@\@height\ht\z@
          \vrule\@height\z@\@width\wd\z@}$}%
    \dp\tw@-\ht\z@
    \@tempdima\ht\z@ \advance\@tempdima2\ht\tw@ \divide\@tempdima\thr@@
    \setbox\tw@\hbox{%
       \raise\@tempdima\hbox{\scalebox{1}[-1]{\lower\@tempdima\box
\tw@}}}%
    {\ooalign{\box\tw@ \cr \box\z@}}}
\def\subsection{\@startsection{subsection}{0}%
\z@{\linespacing\@plus\linespacing}{\linespacing}%
{\bf}}
\DeclareMathOperator{\Prob}{\mathbb{P}} 
\newcommand{\D}{\mathrm{d}}          
\newcommand{\RR}{\mathbb{R}}         
\newcommand{\Rd}{{\mathbb{R}^d}}       
\newcommand{\NN}{\mathbb{N}}         
\newcommand{\Ind}{\mathds{1}}            
\newcommand{\grad}{\nabla}
\newcommand{\sB}{\mathscr{B}}    
\newcommand{\sU}{\mathscr{U}}
\newcommand{\cC}{\mathcal{C}}     
\newcommand{\cD}{\mathcal{D}}     
\newcommand{\cF}{\mathcal{F}}
\newcommand{\cB}{\mathcal{B}}
\newcommand{\cK}{\mathcal{K}}
\newcommand{\sM}{\mathscr{M}}     
\newcommand{\abs}[1]{\lvert#1\rvert}
\newcommand{\norm}[1]{\lVert#1\rVert}
\providecommand{\pro}[1]{(#1_t)_{t \geq 0}}
\providecommand{\semi}[1]{\{#1_t: t \geq 0\}}
\providecommand{\seq}[1]{(#1_n)_{n\in \mathbb{N}}}
\DeclareMathOperator{\Dom}{Dom}
\DeclareMathOperator{\Tr}{Tr}
\DeclareMathOperator{\diam}{diam}
\DeclareMathOperator{\inrad}{inrad}
\newcommand{\ex}{\mathbb{E}}
\newcommand{\Dst}{\mathrel{\stackrel{\makebox[0pt]{\mbox{\normalfont\tiny d}}}{=}}}
\DeclareMathOperator{\dist}{dist}
\DeclareMathOperator{\Psidel}{\Psi(-\Delta)}
\begin{document}

\title[Maximum principles]%
{\sc \textbf{Maximum principles and Aleksandrov-Bakelman-Pucci type estimates for non-local Schr\"odinger 
equations with exterior conditions}}

\author{Anup Biswas and J\'{o}zsef L\H{o}rinczi}

\address{Anup Biswas \\
Department of Mathematics, Indian Institute of Science Education and Research, Dr. Homi Bhabha Road,
Pune 411008, India, anup@iiserpune.ac.in}

\address{J\'ozsef L\H{o}rinczi \\
Department of Mathematical Sciences, Loughborough University, Loughborough LE11 3TU, United Kingdom,
J.Lorinczi@lboro.ac.uk}

\date{}


\begin{abstract}
We consider Dirichlet exterior value problems related to a class of non-local Schr\"odinger operators, whose
kinetic terms are given in terms of Bernstein functions of the Laplacian. We prove elliptic and parabolic
Aleksandrov-Bakelman-Pucci type estimates, and as an application obtain existence and uniqueness of weak solutions.
Next we prove a refined maximum principle in the sense of Berestycki-Nirenberg-Varadhan, and a converse. Also, we
prove a weak anti-maximum principle in the sense of Cl\'ement-Peletier, valid on compact subsets of the domain, and
a full anti-maximum principle by restricting to fractional Schr\"odinger operators. Furthermore, we show a maximum
principle for narrow domains, and a refined elliptic ABP-type estimate. Finally, we obtain Liouville-type theorems
for harmonic solutions and for a class of semi-linear equations. Our approach is probabilistic, making use of the
properties of subordinate Brownian motion.
\end{abstract}

\keywords{Non-local Schr\"odinger operator, Bernstein function, subordinate Brownian motion, Dirichlet exterior
condition problem, principal eigenvalue and eigenfunction, refined maximum principle, anti-maximum principle,
Aleksandrov-Bakelman-Pucci estimate, Liouville theorem}

\subjclass[2000]{35B50, 35S15, 47A75, 60G51, 60J75}

\maketitle

\section{\textbf{Introduction}}
The techniques developed around the broad concept of extremal behaviour of (sub-/super-) solutions of boundary value
problems proved to be very successful in the analysis of partial differential equations. Currently there are various
refinements and generalizations of maximum principles in place, which have a deep impact on proofs of existence,
uniqueness, regularity, and various qualitative properties of solutions. Recently, new efforts have been made to
extend these techniques to integro-differential (i.e., non-local) equations as well. Our goal in this paper is to
further contribute to a developing of maximum principles for non-local equations with exterior conditions. We will
be concerned with three aspects of such problems, Aleksandrov-Bakelman-Pucci type estimates, refined maximum/anti-maximum
principles, and Liouville-type theorems.

Consider the elliptic operator $L = \sum_{i,j=1}^d a_{ij}(x)\partial_{x_i}\partial_{x_j}$, with a positive-definite
symmetric matrix $A = (a_{ij}(x))_{1\leq i,j\leq d}$, a given function $f$, a bounded domain $\cD \subset \Rd$, and
the boundary value problem
\begin{equation*}
\left\{\begin{array}{ll}
-Lu \leq f, \quad \text{in}\, \; \cD \\
\quad \;\, u \leq 0,\, \quad \text{on}\,\; \partial \cD.
\end{array} \right.
\end{equation*}
A fundamental result, which is now known as the Aleksandrov-Bakelman-Pucci (ABP) estimate, states that a solution
$u \in \cC^2(\cD) \cap \cC(\bar\cD)$ satisfies
$$
\sup_{\cD} u \leq c \left\|\frac{f}{(\det A)^{1/d}}\right\|_{L^d(\cD)},
$$
with a suitable constant $c = c(d,\cD) > 0$. Various generalizations of such estimates have been obtained in the last
decades. Extensions for uniformly elliptic fully nonlinear equations have been derived in \cite{C89,C95,CC95,ACP,I11}.
Less regular solutions, such as $u \in W^{2,p}_{\rm loc}(\cD) \cap \cC(\bar \cD)$ with a suitable $p < d$, have been
considered in \cite{FS84,E93,C95}, and for related Bony-estimates we refer to \cite{L83}. For extensions to cases of
viscosity solutions we refer to \cite{CCKS,DFQ,KS}, and for unbounded domains see \cite{C95,CDLV,BCDV,V03}. Apart
from elliptic equations, ABP estimates have been obtained also for parabolic equations \cite{K76,T85,CKS}. For a
recent survey presenting applications and further references see \cite{C17}. Although these estimates formulate
naturally in the terminology of analysis, it is interesting to note that a probabilistic counterpart has been obtained
in \cite{K72,K74}. Indeed, Krylov showed that if for a diffusion given by
$$
\D{X}_t = b_t \D{t} + \sigma_t \D{B}_t,
$$
where $\pro B$ is standard Brownian motion, and the drift and diffusion coefficients are chosen in such a way that
the right hand side exists as a stochastic integral, and furthermore,
$$
|b_t| \leq C_1 (\det\sigma_t^\top\sigma_t)^{1/d} \quad \mbox{and} \quad \Tr \sigma_t^\top\sigma_t \leq C_2, \qquad t > 0,
\;\; \Prob_W-\mbox{a.s.},
$$
hold with some constants $C_1, C_2  >0$, then the following expectation with respect to Wiener measure $\Prob_W$ satisfies
$$
\ex^x_{\Prob_W} \left[\int_0^{\uptau_\cD} (\det\sigma_t^\top\sigma_t)^{1/d}|f(X_t)| \D{t}\right] \leq C\|f\|_{L^d(\cD)},
$$
for every $f \in L^d(\cD)$, where $\cD \subset \Rd$ is any bounded domain, $\uptau_\cD = \inf\{t > 0: \, X_t \not\in \cD\}$
is the first exit time of the process from $\cD$, and $C = C(d,C_1,\diam \cD)$ is a suitable constant.

An ABP-type estimate is a specific expression of more general maximum principles used in PDE theory and harmonic analysis.
Consider, more generally than above, the operator
$$
L = \sum_{i,j=1}^d a_{ij}(x)\partial_{x_i}\partial_{x_j} + \sum_{i=1}^d b_i(x)\partial_{x_i} + c(x)
$$
with uniform ellipticity condition and sufficient regularity of the coefficients ($a_{ij}$ are continuous, and $\abs{b},
\abs{c}$ are bounded). Recall that a classical version of the maximum principle for a bounded domain $\cD \subset \Rd$ says
that if
$$
Lu \geq 0 \; \mbox{in $\cD$} \quad \mbox{with} \quad c\leq 0\; \mbox{in $\cD$}, \quad \mbox{and}\quad \limsup_{x\to\partial\cD} u(x) \leq 0,
$$
then $u \leq 0$ in $\cD$. There are well-known conditions ensuring that the maximum principle holds for cases when $u \in
\cC^2$ or $u \in W^{2,d}_{\rm loc}$. Also, under conditions of sufficient regularity of $L$ and $\partial \cD$ it has been
established that the maximum principle holds exactly when the principal Dirichlet eigenvalue of $-L$ in $\cD$ is positive.
Using this relationship, Berestycki, Nirenberg and Varadhan defined a  generalized principal eigenvalue given by
$$
\lambda(L,\cD) = \sup \{\lambda: \, \mbox{there exists $w > 0$ in $\cD$ such that $Lw + \lambda w \leq 0$} \}
$$
and proposed in \cite{BNV} what is now called a refined maximum principle, valid for any bounded domain. The key step in
their construction for irregular boundaries was to prescribe weaker conditions, namely only for a sequence of points in
$\cD$ approaching the boundary for which a solution of the equation $(L-c)v = -1$ vanishes on $\partial\cD$. Denoting such
a sequence by $\seq x$ and $x_n \rightsquigarrow \partial \cD$ to mean that $\lim_{n\to\infty}v(x_n) = 0$, the refined
maximum principle says that if
$$
Lu \geq 0 \; \mbox{in $\cD$}, \;\; \mbox{$u$ is bounded from above}, \;\; \mbox{and} \;\; \limsup_{x_n\rightsquigarrow
\partial\cD} u(x_n) \leq 0,
$$
then $u \leq 0$ in $\cD$. Furthermore, the authors proved that the refined maximum principle holds for $L$ exactly when the
generalized principal eigenvalue $\lambda(L,\cD) > 0$. It is worthwhile to note that this construction has an intrinsic
probabilistic meaning. Indeed, $v(x)$ corresponds to the mean of the first exit time of the diffusion generated by $L-c$
starting from a point $x \in \cD$, and thus boundary conditions are set only on those points of $\partial \cD$ which can be
reached by an exit event through a sequence $\seq x$. For subsequent developments on the characterization of the generalized
principal eigenvalue and further generalizations we refer to \cite{NP,PT,BR,BCPR}, and for a book-length discussion of the
probabilistic aspects to \cite{P08}.

Another type of results are the anti-maximum principles related to a sign-reverting phenomenon, first observed by Cl\'ement
and Peletier \cite{CP}. An initial version of this has been established for the boundary value problem
$$
-\Delta u = \lambda u + f \; \mbox{in $\cD$} \quad \mbox{with} \quad u = 0 \; \mbox{on $\partial\cD$},
$$
where the boundary $\partial\cD$ is assumed to be smooth and $f \in L^p(\Rd)$, $p>1$. Choosing $f \geq 0$, strictly positive
on a non-zero measure subset of $\cD$, the maximum principle implies that $u > 0$ if $\lambda < \lambda_1$, where $\lambda_1$
is the principal Dirichlet eigenvalue of the Laplacian. However, the authors proved that positivity does not hold for
arbitrarily large $\lambda$ beyond the principal Dirichlet eigenvalue, and for $p > d$ there exists $\lambda(f) > \lambda_1$
such that $u < 0$ for all $\lambda \in (\lambda_1, \lambda(f))$. Subsequently, it has been shown that $p > d$ is a sharp
condition, and further related results have been obtained for more general cases, including classical Schr\"odinger operators;
we refer the reader to \cite{H81,B95,T96,S97,P99,AFT,CS01,GGP02}.

Although recently much research has been done on general non-local equations, results on maximum principles are scarce;
see, for instance, the open problems section in \cite{Mwiki}.
The first version of ABP was obtained in \cite{CS-09} for nonlinear stable-like operators, where the estimate only used
$L^\infty$ norm of $f$. Recently, a more quantitative version involving a combination of $L^d$ and $L^\infty$ norms of $f$
is proved in \cite{GS12} for a class of fractional operators comparable, in some sense, with the fractional Laplacian.
The authors replace the usual concept of convex envelope by another object and rely essentially on a use of the Riesz
potential to obtain their estimates, and also discuss the difficulties for which more general non-local operators cannot
be covered in their framework. Another feature is that \cite{CS-09, GS12} do not consider the zeroth order term in their
equations. The paper \cite{MS17} uses more general operators with a non-degenerate second order term and establishes ABP
estimates for a class of uniformly elliptic and parabolic non-local equations. Here one can see the non-local part as a
perturbation of the usual second order elliptic operator and thus it becomes challenging to obtain a similar estimate for
a purely non-local operator. Some further related works include \cite{GL,CLD}.

A third direction of development we consider are Liouville-type theorems. Recall that the classical Liouville theorem for
harmonic functions says that any non-negative solution of $\Delta u = 0$ in $\Rd$ is a constant. Liouville-type results
for the Laplacian have been extended by Gidas and Spruck \cite{GS} to non-negative solutions of the semi-linear elliptic
equation
$$
\Delta u + c\,u^p = 0 \;\; \mbox{in $\Rd$},
$$
showing that if $c > 0$ and $p \in (1, \frac{d+2}{d-2})$, then $u \equiv 0$; for further developments we refer to
\cite{BCN,CDC} and references therein. For non-local equations, \cite{MC06, FQ11} considered Liouville theorems for the
above problem with operators comparable to the fractional Laplacian. In \cite{QX16} non-existence of positive viscosity
solutions was similarly addressed for Lane-Emden systems involving fractional Laplacians. In \cite{FW16} a larger class
of non-local operators is considered for harmonic functions, however, with a polynomial decay of its jump measure at
infinity.

In the present paper we derive and prove results in the above three directions for non-local Schr\"odinger equations.
These problems involve non-local Schr\"odinger operators of the form
$$
H^{\cD, V} = \Psi(-\Delta) + V,
$$
restricted to suitable function spaces over bounded domains $\cD \subset \Rd$ and possibly with $V \equiv 0$. The kinetic
term in $H^{\cD, V}$ is given by a Bernstein function $\Psi$ of the Laplacian, and the potential term is given by a
multiplication operator $V$ (for details see Section 2 below). Such operators have been considered in
\cite{HIL12,HL12,KL15,KL17} in detail, and they have a number of applications in relativistic quantum theory, anomalous
transport and other fields. Another important aspect is that the operators $-\Psi(-\Delta)$ are infinitesimal generators
of a class of L\'evy processes, and therefore, such non-local equations have close ties with problems in probability and
stochastic control. An example is the fractional Laplacian $(-\Delta)^{\alpha/2}$, which is currently much investigated
in both analysis and probability. However, other choices of $\Psi$ relate with many other applications (for a catalogue of
Bernstein functions with detailed descriptions see \cite{SSV}), and we should emphasize that an operator $\Psi(-\Delta)$
different from the fractional Laplacian may involve in general very different properties. Also, a rapidly growing
literature on non-local equations reveals that such equations display a number of new properties and behaviours, which
differ substantially from their PDE analogues based on the classical (local) Laplacian.

In contrast with the existing literature, our approach to obtaining maximum principles is probabilistic, using a functional
integral representation of the solution semigroup related to the non-local equations we consider. This has the benefit
of being rather robust in tackling the difficulties arising from the non-locality of the operators close to the boundary
of the domain, allowing to obtain such results for a large class of non-local operators. As it will be seen below
(Proposition \ref{P1.2}), for the Dirichlet exterior condition problem
$$
H^{\cD, V}\varphi = f \;\; \mbox{in $\cD$} \quad \mbox{with} \quad \varphi = 0 \;\; \mbox{in $\Rd\setminus\cD$},
$$
we have the functional integral representation
$$
\varphi(x) = \ex^x\left[e^{-\int_0^{t\wedge\uptau_\cD} V(X_s)\,\D{s}} \varphi(X_{t\wedge\uptau_\cD})\right] +
\ex^x\left[\int_0^{t\wedge\uptau_\cD} e^{-\int_0^s V(X_{r}) \, \D{r}} f(X_s)\, \D{s}\right],
$$
where $\pro X$ is the jump L\'evy process generated by $-\Psi(-\Delta)$, $\uptau_\cD$ is its first exit time from $\cD$,
and the expectation is taken with respect to the probability measure of the process starting at $x \in \Rd$. Given the
specific form of the kinetic part of $H^{\cD, V}$, the jump process can be described in some detail (in fact, it is a
subordinate Brownian motion $X_t = B_{S_t^\Psi}$, i.e., Brownian motion sampled at random times given by a subordinator
$\pro {S^\Psi}$ uniquely determined by $\Psi$), which is sufficient for us to be able to control such expectations. This
will be explained in more detail in Section 2 below. To the best of our knowledge, developing such tools to prove maximum
principles for non-local equations has not been attempted in the literature before. We also note that although we focus
mainly on operators related to subordinate Brownian motion, going well beyond the fractional Laplacian and related
stable processes, our approach is more accommodating and works also for a larger class of operators related to more
general Markov processes (see Remark~\ref{R3.2} below). In particular, we are not aware of similar results covering, for
instance, operators such as $-\Delta + b(-\Delta)^{\alpha/2}$ or $(-\Delta + m^{2/\alpha})^{\alpha/2} - m$, $\alpha \in
(0,2)$, $b, m > 0$. Moreover, we emphasize that our ABP-type estimates do not involve a contact set like used in
\cite{CS-09, GS12}, while we use $L^p$ norm of the source term $f$.

Our main results are as follows. In Section 3 first we obtain elliptic (Theorem \ref{T1.6}) and parabolic (Theorem
\ref{parabo}) ABP-type estimates for a large class of equations related to $H^{\cD, V}$. As an application, in Theorem
\ref{T3.3} we prove existence and uniqueness of solutions for a Dirichlet exterior value problem for $H^{\cD, V}$. Next,
in Section 4, under a mild probabilistic condition on boundary regularity we derive and prove a stochastic representation
of the principal eigenfunction of the non-local Schr\"odinger operator (Theorem \ref{T1.1}). This will then allow us to
obtain a number of maximum principles for equations related to $H^{\cD, V}$. These maximum principles appear to be new
in the context of non-local operators. Theorem \ref{T1.2} gives a refined maximum principle, and Theorem \ref{conv} shows
a converse. Theorem \ref{T1.4} presents a weak anti-maximum principle, which we call `weak' due to the fact that it holds
for compact subsets of the domain. Since Hopf's lemma is available for fractional Laplacians, we can prove a full
anti-maximum principle for fractional Schr\"odinger operators in Theorem \ref{antimaxx}. Due to the special role of narrow
domains in the sufficiency of classical maximum principles, we show a maximum principle for $H^{\cD, V}$ for such domains
in Theorem \ref{narrow}. Making use of the stochastic representation in Theorem \ref{T1.1}, we also obtain a refined
elliptic ABP-type estimate (Theorem \ref{T4.7}). Finally, we present Liouville-type theorems in a novel approach, first
for harmonic functions with respect to $\Psi(-\Delta)$ in Theorem \ref{T4.8}, and next for a class of semi-linear non-local
equations in Theorem \ref{T4.9} by using recurrence properties of the related random process.

\section{\textbf{Non-local Schr\"odinger operators}}
In this section we briefly describe the operators involved in the non-local equations studied
in Sections 3-4. Recall that a Bernstein function is a non-negative completely monotone function, i.e., an element
of
$$
\mathcal B = \left\{f \in \cC^\infty((0,\infty)): \, f \geq 0 \;\; \mbox{and} \:\; (-1)^n\frac{d^n f}{dx^n} \leq 0,
\; \mbox{for all $n \in \mathbb N$}\right\}.
$$
In particular, Bernstein functions are increasing and concave. Below we will restrict to the subset
$$
{\mathcal B}_0 = \left\{f \in \mathcal B: \, \lim_{u\downarrow 0} f(u) = 0 \right\}.
$$
Let $\mathcal M$ be the set of Borel measures $\nu$ on $\RR \setminus \{0\}$ with the property that
$$
\nu((-\infty,0)) = 0 \quad \mbox{and} \quad \int_{\RR\setminus\{0\}} (y \wedge 1) \nu(dy) < \infty.
$$
Bernstein functions $\Psi \in {\mathcal B}_0$ can be represented in the form
$$
\Psi(u) = bu + \int_{(0,\infty)} (1 - e^{-yu}) \nu(\D{y})
$$
with $b \geq 0$, and the map $[0,\infty) \times \mathcal M \ni (b,\nu) \mapsto \Psi \in {\mathcal B}_0$ is
bijective.

\medskip
Below we will often use a class of Bernstein functions singled out by the following property.
\begin{assumption}
\label{WLSC}
The function $\Psi$ is said to satisfy a
\begin{enumerate}
\item[(i)]
weak lower scaling (WLSC) property with parameters ${\underline\mu} > 0$, $\underline{c} \in(0, 1]$ and
$\underline{\theta}\geq 0$, if
$$
\Psi(\gamma u) \;\geq\; \underline{c}\, \gamma^{\underline\mu} \Psi(u), \quad u>\underline{\theta}, \; \gamma\geq 1.
$$
\item[(ii)]
weak upper scaling (WUSC) property with parameters $\bar\mu > 0$,
$\bar{c} \in[1, \infty)$ and $\bar{\theta}\geq 0$, if
$$
\Psi(\gamma u) \;\leq\; \bar{c}\, \gamma^{\bar{\mu}} \Psi(u), \quad u>\bar{\theta}, \; \gamma\geq 1.
$$
\end{enumerate}
\end{assumption}

\begin{example}\label{Eg2.1}
Some important examples of $\Psi$ satisfying Assumptions \ref{WLSC}(i) and \ref{WLSC}(ii) include the following
cases with the given parameters, respectively:
\begin{itemize}
\item[(i)]
$\Psi(u)=u^{\alpha/2}, \, \alpha\in(0, 2]$, with ${\underline\mu} = \frac{\alpha}{2}$, $\underline{\theta}=0$,
and $\bar\mu = \frac{\alpha}{2}$, $\bar{\theta}=0$.
\item[(ii)]
$\Psi(u)=(u+m^{2/\alpha})^{\alpha/2}-m$, $m> 0$, $\alpha\in (0, 2)$, with ${\underline\mu} = \frac{\alpha}{2}$,
$\underline{\theta}=0$ and $\bar\mu = 1$, $\bar{\theta}=0$.
\item[(iii)]
$\Psi(u)=u^{\alpha/2} + u^{\beta/2}, \, \alpha, \beta \in(0, 2]$, with ${\underline\mu} = \frac{\alpha}{2}
\wedge \frac{\beta}{2}$, $\underline{\theta}=0$ and $\bar\mu = \frac{\alpha}{2} \vee \frac{\beta}{2}$,
$\bar{\theta}=0$.
\item[(iv)]
$\Psi(u)=u^{\alpha/2}(\log(1+u))^{-\beta/2}$, $\alpha \in (0,2]$, $\beta \in [0,\alpha)$ with
${\underline\mu}=\frac{\alpha-\beta}{2}$, $\underline{\theta}=0$ and $\bar\mu=\frac{\alpha}{2}$, $\bar{\theta}=0$.
\item[(v)]
$\Psi(u)=u^{\alpha/2}(\log(1+u))^{\beta/2}$, $\alpha \in (0,2)$, $\beta \in (0, 2-\alpha)$, with
${\underline\mu}=\frac{\alpha}{2}$, $\underline{\theta}=0$ and $\bar\mu=\frac{\alpha+\beta}{2}$, $\bar{\theta}=0$.
\end{itemize}
\end{example}
\begin{remark}
It is known \cite[Lem.~11]{BGR14b} that $\Psi$ has the WLSC property with parameters ${\underline\mu}$,
$\underline{c}$ and $\underline\theta$ if and only if $\Psi(u) u^{-{\underline\mu}}$ is comparable to a
non-decreasing function in $(\underline\theta, \infty)$. Also, $\Psi$ has the WUSC property with
parameters ${\bar\mu}$, $\bar{c}$ and $\bar\theta$ if and only if $\Psi(u) u^{-{\bar\mu}}$ is comparable
to a non-increasing function in $(\bar\theta, \infty)$. These scaling properties are also related to the
Matuszewska indices, for a discussion see \cite[Rem.~2]{BGR14b}.
\end{remark}

\medskip
Next consider the operator
\begin{equation}
\label{BernLapl}
H = H_0 +  V := \Psidel + V,
\end{equation}
which we call a non-local Schr\"odinger operator with kinetic term $H_0 = \Psidel$ and potential $V$, where $\Psi
\in \cB_0$. The operator $H_0$ can be defined through functional calculus by using the spectral decomposition of
the Laplacian. It is a pseudo-differential operator with Fourier multiplier
$$
\widehat{H_0 f}(y) = \Psi(|y|^2)\widehat f(y), \quad y \in \Rd, \; f \in  \Dom(H_0),
$$
and domain $\Dom(H_0)=\big\{f \in L^2(\Rd): \Psi(|\cdot|^2) \widehat f \in L^2(\Rd) \big\}$. By general arguments
it can be seen that $H_0$ is a positive, self-adjoint operator with core $C_{\rm c}^\infty(\Rd)$.

For simplicity, we choose the potential $V \in L^\infty(\Rd)$, so the non-local Schr\"odinger operator $H$ can be
defined as a self-adjoint operator in terms of perturbation theory. However, we note that this restriction is not
necessary, and we could use Kato-class potentials also allowing local singularities. For more details we refer to
\cite{BL} and references therein.

In what follows, we will use a stochastic representation of the semigroup $\{e^{-tH}: \, t \geq 0\}$. This is
obtained by using the fact that Bernstein functions are related to subordinators. Recall that a one-dimensional L\'evy
process $\pro S$ on a probability space $(\Omega_S, {\mathcal F}_S, \mathbb P_S)$ is called a subordinator whenever
it satisfies $S_s \leq S_t$ for $s \leq t$, $\mathbb P_S$-almost surely. A basic fact is that the Laplace transform
of a subordinator is given by a Bernstein function, i.e.,
\begin{equation*}
\ex_{\mathbb P_S} [e^{-uS_t}] = e^{-t\Psi(u)}, \quad t \geq 0,
\end{equation*}
holds, where $\Psi \in {\mathcal B}_0$. Moreover, there is a bijection between the set of subordinators on a given
probability space and Bernstein functions in $\cB_0$. In our notation below, we will write $\pro {S^\Psi}$ for the
unique subordinator associated with Bernstein function $\Psi$. In Example~\ref{Eg2.1} above (i) corresponds to an
$\alpha/2$-stable subordinator, (ii) to a relativistic $\alpha/2$-stable subordinator, (iii) to sums of independent
subordinators of different indices, etc. For a detailed discussion of Bernstein functions and subordinators we
refer to \cite{SSV}.

Let $\pro B$ be $\Rd$-valued Brownian motion on Wiener space $(\Omega_W,{\mathcal F}_W, \mathbb P_W)$, with variance
$\ex_{\Prob_W} [B_t^2] = 2t$, $t\geq 0$. Also, let $\pro {S^\Psi}$ be an independent subordinator. The random process
$$
\Omega_W \times \Omega_S \ni (\omega,\varpi) \mapsto B_{S_t(\varpi)}(\omega) \in \Rd
$$
is called subordinate Brownian motion under $\pro {S^\Psi}$. Every subordinate Brownian motion is a L\'evy process,
satisfying the strong Markov property, and apart from the trivial case generated by $\Psi(u) = u$ they have paths
with jump discontinuities. For simplicity, we will denote a subordinate Brownian motion by $\pro X$, its probability
measure for the process starting at $x \in \Rd$ by $\mathbb P^x$, and expectation with respect to this measure by
$\ex^x$.

The relationship between the operator $H$ given by \eqref{BernLapl} and these processes is expressed by a Feynman-Kac
type formula obtained in \cite{HIL12}. This relies on the fact that the infinitesimal generator of $\pro X$ obtained by
subordinating Brownian motion with a subordinator of Laplace exponent $\Psi$, is the operator $-H_0 = -\Psidel$. Under
perturbation by $V$ we then have the formula
$$
e^{-tH} f(x) = \ex^x [e^{-\int_0^t V(X_s) \D s} f(X_t)], \quad t \geq 0, \, x \in \Rd, \, f \in L^2(\Rd).
$$
Also, subordination gives the expression
\begin{equation*}
\Prob (X_t \in E) = \int_0^\infty \Prob_W(B_s \in E)\Prob_S(S_t \in \D s),
\end{equation*}
for every measurable set $E$. For further details on non-local Schr\"odinger operators and related jump processes we
refer to \cite{HIL12,HL12,KL15,KL17} and references therein.

It is straightforward to see that $\Psi \in \cB_0$ satisfying Assumption~\ref{WLSC}(i) also satisfies the
Hartman-Wintner condition
\begin{equation}
\label{HW}
\lim_{\abs{u}\to\infty} \, \frac{\Psi(u^2)}{\log\abs{u}}=\infty.
\end{equation}
It is known that under this condition the subordinate Brownian motion $\pro X$ has a bounded continuous transition
probability density $q_t(x,y)$, see \cite{KS13}. It follows also that $q_t(x, y)=q_t(x-y)$ and $q_t(\cdot)$ is radially
decreasing.

We close this section by presenting the following estimate, which will be useful below.
\begin{lemma}\label{L2.1}
Let Assumption~\ref{WLSC}(i) hold and $\Psi \in \cB_0$ be strictly increasing. Then there exist positive constants $\kappa_1,
\kappa_2$ such that
\begin{equation}\label{EL2.1A}
q_t(x)\leq \kappa_1 t^{-\frac{d}{2{\underline\mu}}}, \quad  x \in \Rd, \; t\in (0, \kappa_2].
\end{equation}
\end{lemma}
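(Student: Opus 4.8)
The plan is to reduce everything to the value $q_t(0)$ and then estimate it through the subordination structure. Since $q_t(\cdot)$ is radially decreasing (as recalled right after \eqref{HW}), $q_t(x)\le q_t(0)$ for every $x$, so it suffices to bound $q_t(0)$. Because $\ex_{\Prob_W}[B_t^2]=2t$, the density of $B_s$ at the origin equals $c_d\,s^{-d/2}$ with $c_d=(4\pi)^{-d/2}$, so the subordination formula $\Prob(X_t\in E)=\int_0^\infty\Prob_W(B_s\in E)\,\Prob_S(S_t^\Psi\in\D s)$ gives $q_t(0)=c_d\,\ex_{\Prob_S}\bigl[(S_t^\Psi)^{-d/2}\bigr]$. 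Inserting the elementary identity $a^{-d/2}=\Gamma(d/2)^{-1}\int_0^\infty u^{d/2-1}e^{-au}\,\D u$ ($a>0$) together with the defining Laplace transform $\ex_{\Prob_S}[e^{-uS_t^\Psi}]=e^{-t\Psi(u)}$, and using Tonelli (all integrands are non-negative), I obtain
$$
q_t(0)\;=\;\frac{c_d}{\Gamma(d/2)}\int_0^\infty u^{\frac{d}{2}-1}\,e^{-t\Psi(u)}\,\D u .
$$
(The same formula follows by Fourier inversion of $\widehat{q_t}(\xi)=e^{-t\Psi(\abs{\xi}^2)}$ in polar coordinates, with convergence ensured by \eqref{HW}.) Thus the lemma reduces to showing this integral is $\order\bigl(t^{-d/(2\underline\mu)}\bigr)$ as $t\downarrow 0$.

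For that I would extract a polynomial minorant of $\Psi$ at infinity from Assumption~\ref{WLSC}(i): fix any $u_0>\underline\theta$, note $\Psi(u_0)>0$ (this is where strict monotonicity of $\Psi\in\cB_0$ enters), and apply the WLSC inequality with $\gamma=u/u_0\ge1$ to get $\Psi(u)\ge C_0\,u^{\underline\mu}$ for $u\ge u_0$, where $C_0=\underline c\,\Psi(u_0)u_0^{-\underline\mu}>0$. Split the integral at $u_0$. Over $(0,u_0]$ one uses $e^{-t\Psi(u)}\le1$, so that part is at most the $t$-independent constant $\int_0^{u_0}u^{d/2-1}\D u$. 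Over $(u_0,\infty)$ one bounds $e^{-t\Psi(u)}\le e^{-tC_0 u^{\underline\mu}}$, enlarges the domain to $(0,\infty)$, and substitutes $v=tC_0u^{\underline\mu}$ to get a constant multiple of $(tC_0)^{-d/(2\underline\mu)}\,\Gamma\bigl(\tfrac{d}{2\underline\mu}\bigr)$, the Gamma value being finite since $d/(2\underline\mu)>0$. Hence $q_t(0)\le c_1+c_2\,t^{-d/(2\underline\mu)}$ with $c_1,c_2>0$ depending only on $d,\underline\mu,\underline c,u_0$; taking $\kappa_2=1$ (any positive value works) and using $t^{-d/(2\underline\mu)}\ge1$ for $t\le\kappa_2$ to absorb $c_1$ yields \eqref{EL2.1A} with $\kappa_1=c_1+c_2$.

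There is no genuine obstacle here — the whole content is the scaling substitution above. The only point needing care is that WLSC furnishes the polynomial lower bound $\Psi(u)\gtrsim u^{\underline\mu}$ only for $u$ beyond the threshold $\underline\theta$, not on all of $(0,\infty)$; this is harmless precisely because the remaining low-frequency contribution is only a bounded quantity, which is dominated by the blow-up rate $t^{-d/(2\underline\mu)}$ in the small-$t$ regime where the estimate is asserted.
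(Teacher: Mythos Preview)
Your proof is correct and takes a genuinely different route from the paper. The paper does not compute $q_t(0)$ directly; instead it invokes an external heat-kernel bound (Proposition~19 in \cite{BGR14b}) of the form $q_t(x)\le C\bigl(\Phi^{-1}(1/t)\bigr)^d$ with $\Phi(u)=\Psi(u^2)$, and then uses the WLSC inequality to estimate the inverse function, obtaining $\Phi^{-1}(\gamma)\lesssim \gamma^{1/(2\underline\mu)}$ for large $\gamma$ and hence $q_t(x)\lesssim t^{-d/(2\underline\mu)}$ for small $t$. Your argument is more self-contained: you bypass the cited result entirely by writing $q_t(0)$ as a one-dimensional integral via subordination (equivalently, Fourier inversion in polar coordinates), extracting the polynomial minorant $\Psi(u)\gtrsim u^{\underline\mu}$ for $u\ge u_0$ directly from WLSC, and doing an explicit Gamma-function substitution. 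What the paper's route buys is brevity once the black-box estimate is granted; what yours buys is transparency and independence from the reference, and it makes the role of the threshold $\underline\theta$ (handled by your split at $u_0$) completely explicit.
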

\begin{proof}
Let $\Phi(u)=\Psi(u^2)$. By our assumptions on $\Psi$ it follows that $\Phi$ strictly increasing and
\begin{equation}\label{EL2.1B}
\Phi(\gamma u)\geq \underline{c}\,\gamma^{2{\underline\mu}}\, \Phi(u), \quad \gamma\geq 1, \; u\geq \sqrt{\underline \theta}.
\end{equation}
Thus by \cite[Prop.~19]{BGR14b} there exists a constant $C = C(d, {\underline\mu})$ such that
\begin{equation}\label{EL2.1C}
q_t(x)\leq C \left(\Phi^{-1}\left(\frac{1}{t}\right)\right)^d, \;\; t>0, \quad \text{and}\quad
t\Phi(\sqrt{\underline\theta})<\frac{1}{\pi^2}.
\end{equation}
We may assume $\underline\theta>0$ with no loss of generality. Thus from \eqref{EL2.1B} it is seen that
$$
\Phi(\gamma^{\frac{1}{2{\underline\mu}}} \sqrt{\underline\theta})
\geq \underline{c}\, \gamma\, \Phi(\sqrt{\underline\theta}),\quad  \gamma\geq 1,
$$
implying
$$
\Phi^{-1}(\underline{c}\, \gamma\, \Phi(\sqrt{\underline\theta}))\leq \gamma^{\frac{1}{2{\underline\mu}}} \sqrt{\underline\theta},
\quad \gamma\geq 1.
$$
Write $\kappa_3=\underline{c}\Phi(\sqrt{\underline\theta})$. Then for every $\gamma\geq \kappa_3$ we obtain
$$
\Phi^{-1}(\gamma)=\Phi^{-1}\left(\frac{\gamma}{\kappa_3}\kappa_3\right)
\leq \sqrt{\underline\theta}\, \kappa_3^{-\frac{1}{2{\underline\mu}}}\gamma^{\frac{1}{2{\underline\mu}}}.
$$
Hence \eqref{EL2.1A} follows from \eqref{EL2.1C} by choosing
$$
\kappa_2 = \frac{1}{\kappa_3} \wedge \frac{1}{2\pi^2 \Phi(\sqrt{\underline\theta})} \quad \text{and}\quad
\kappa_1= C\, \left(\sqrt{\underline\theta}\, \kappa_3^{-\frac{1}{2{\underline\mu}}}\right)^d\,.
$$
\end{proof}

\medskip
\section{\textbf{Aleksandrov-Bakelman-Pucci estimates}}

\subsection{Elliptic and parabolic ABP-type estimates}
In this section we derive Aleksandrov-Bakelman-Pucci (ABP) estimates of elliptic and parabolic types using a
probabilistic approach. First we consider the elliptic case.

We use the notation
$$
\uptau_\cD=\inf\{t>0\; :\; X_t\notin \cD\}
$$
for the first exit time of $\pro X$ from a domain $\cD$. A standing assumption in this paper is the following.
\begin{assumption}\label{As3.1}
$\cD \subset \Rd$ is a bounded domain and all points of $\partial\cD$ are regular, i.e., for every
$z\in\partial\cD$ we have $\Prob^z(\uptau_\cD=0)=1$.
\end{assumption}
\noindent
It  follows from \cite[Lem.~2.9]{BGR15} that every domain $\cD$ with the
exterior cone condition has a regular boundary in the above sense, provided $\Psi$ is unbounded.

We denote the diameter of the domain $\cD$ by $\diam \cD$. In case $\diam \cD < \infty$, it is known that
$\sup_{x\in\cD}\ex^x[\uptau_\cD]<\infty$. Below we will need the following lemma.
\begin{lemma}\label{L3.1}
Let $\cD \subset \Rd$ be a domain such that $\diam \cD < \infty$. For every $k\in\NN$ we have
\begin{equation}\label{EL3.1A}
\sup_{x\in\cD}\ex^x[\uptau_\cD^k]\;\leq\; k! \left(\sup_{x\in\cD}\ex^x[\uptau_\cD]\right)^k.
\end{equation}
Moreover, there exists a constant $c_k = c_k(d)$ such that
$$
\sup_{x\in\cD}\ex^x[\uptau_\cD^k]\;\leq\;  \frac{c_k}{(\Psi([\diam \cD]^{-2}))^k}.
$$
\end{lemma}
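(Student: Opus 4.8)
The plan is to establish \eqref{EL3.1A} first, by induction on $k$ using the Markov property, and then to deduce the second estimate by combining \eqref{EL3.1A} with a bound $M_1:=\sup_{x\in\cD}\ex^x[\uptau_\cD]\le c(d)/\Psi([\diam\cD]^{-2})$; the lemma then follows with $c_k(d)=k!\,c(d)^k$, recalling that $M_1<\infty$ was already noted above.

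For \eqref{EL3.1A}, abbreviate $\uptau=\uptau_\cD$ and $M_j=\sup_{x\in\cD}\ex^x[\uptau^j]$. I would start from the pathwise identity $\uptau^k=k\int_0^{\uptau}(\uptau-s)^{k-1}\,\D s$ and observe that on $\{\uptau>s\}$ the remaining exit time $\uptau-s$ equals $\uptau_\cD\circ\theta_s$, i.e. the exit time of the process restarted at $X_s\in\cD$. Applying the Markov property at the deterministic time $s$ (note $\{\uptau>s\}\in\cF_s$) together with Tonelli's theorem gives
$$\ex^x[\uptau^k]\;=\;k\int_0^\infty\ex^x\bigl[\Ind_{\{\uptau>s\}}\,\ex^{X_s}[\uptau^{k-1}]\bigr]\,\D s\;\leq\;k\,M_{k-1}\int_0^\infty\Prob^x(\uptau>s)\,\D s\;=\;k\,M_{k-1}\,\ex^x[\uptau].$$
Taking the supremum over $x\in\cD$ yields $M_k\le k\,M_1\,M_{k-1}$, and induction on $k$ (trivial base case) gives $M_k\le k!\,M_1^k$, which is \eqref{EL3.1A}.

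To bound $M_1$, set $R=\diam\cD$ and $p(t)=\sup_{x\in\cD}\Prob^x(\uptau_\cD>t)$. The same application of the Markov property at a deterministic time shows that $p$ is submultiplicative, $p(s+t)\le p(s)p(t)$; hence for any $t_0$ with $p(t_0)<1$,
$$M_1\;\leq\;\sup_{x\in\cD}\int_0^\infty\Prob^x(\uptau_\cD>s)\,\D s\;\leq\;t_0\sum_{n\geq0}p(t_0)^n\;=\;\frac{t_0}{1-p(t_0)}.$$
To produce such a $t_0$ of the right size, I would use that on $\{\uptau_\cD>t\}$ the path has remained in $\cD\subseteq\bar B(x,R)$, so by translation invariance and the radial monotonicity of $q_t$,
$$p(t)\;\leq\;\sup_{x\in\cD}\Prob^x\bigl(X_t\in\bar B(x,R)\bigr)\;=\;\Prob^0(\abs{X_t}\le R)\;\leq\;\abs{B(0,R)}\,q_t(0).$$
Exactly as in the proof of Lemma~\ref{L2.1}, \cite[Prop.~19]{BGR14b} bounds $q_t(0)\le C(d,\underline\mu)\bigl(\Psi^{-1}(1/t)\bigr)^{d/2}$, so $p(t)\le C'(d)R^d\bigl(\Psi^{-1}(1/t)\bigr)^{d/2}$. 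Choosing $t_0$ so that the right-hand side equals $\tfrac12$ means $\Psi^{-1}(1/t_0)=c_2(d)R^{-2}$, i.e. $t_0=1/\Psi\bigl(c_2(d)R^{-2}\bigr)$. Since $\Psi\in\cB_0$ is concave with $\Psi(0{+})=0$, the ratio $u\mapsto\Psi(u)/u$ is non-increasing, which yields $\Psi\bigl(c_2(d)R^{-2}\bigr)\ge\bigl(c_2(d)\wedge1\bigr)\Psi(R^{-2})$ and hence $t_0\le c(d)/\Psi(R^{-2})$. With $p(t_0)=\tfrac12$ the displayed bound gives $M_1\le 2t_0\le c(d)/\Psi([\diam\cD]^{-2})$ (after relabelling the constant), completing the reduction and hence the proof.

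The only substantial ingredient is the heat-kernel estimate $q_t(0)\lesssim\bigl(\Psi^{-1}(1/t)\bigr)^{d/2}$ with constants controlled by $d$ (and the weak lower scaling parameters): this is what forces the scale $1/\Psi([\diam\cD]^{-2})$ to appear. Some care is needed for the range of $t$ not directly covered by the form of \cite[Prop.~19]{BGR14b} recalled in Lemma~\ref{L2.1} (namely large $t$ when $\underline\theta>0$), where one instead combines the scaling bound with $q_t(0)=(2\pi)^{-d}\int_{\Rd}e^{-t\Psi(\abs{\xi}^2)}\,\D\xi\to0$. Everything else — the induction for \eqref{EL3.1A}, the submultiplicativity of $p$, and the comparison of $\Psi$ at the scales $R^{-2}$ and $c_2(d)R^{-2}$ via the Bernstein property — is routine.
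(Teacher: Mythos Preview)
Your proof of \eqref{EL3.1A} is essentially identical to the paper's: both use the identity $\uptau^k = k\int_0^\infty (\uptau-s)^{k-1}\Ind_{\{\uptau>s\}}\,\D s$, apply the Markov property at time $s$ to identify $\uptau-s$ on $\{\uptau>s\}$ with the exit time of the shifted process, and iterate the resulting recursion $M_k \leq k\,M_1\,M_{k-1}$.

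For the second estimate, both you and the paper reduce to bounding $M_1$ and then invoke \eqref{EL3.1A}. The paper, however, simply cites \cite[Rem.~4.8]{SL} for the bound $M_1 \leq c(d)/\Psi([\diam\cD]^{-2})$ --- a Pruitt-type estimate valid for general subordinate Brownian motion with a dimension-only constant, requiring no scaling hypothesis on $\Psi$. Your self-contained route through the on-diagonal heat-kernel bound $q_t(0) \lesssim (\Psi^{-1}(1/t))^{d/2}$ together with the submultiplicativity of $p(t)$ is correct, but it imports the WLSC hypothesis (needed already for $q_t$ to exist and be bounded, and for the estimate from \cite{BGR14b}) and therefore yields constants depending on $(\underline\mu,\underline c,\underline\theta)$ rather than on $d$ alone; this falls slightly short of the stated $c_k = c_k(d)$. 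In practice this is harmless, since every application of the lemma in the paper is made under Assumption~\ref{WLSC}(i). What your approach buys is self-containedness; the paper's citation buys the sharper constant dependence and avoids the separate handling of large $t$ when $\underline\theta>0$ that you flag at the end.
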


\begin{proof}
For simplicity, denote in this proof $\uptau=\uptau_\cD$. Recall that $\pro X$ is a strong Markov process
with respect to its natural filtration $\pro\cF$. Using the strong Markov property, for every $x\in\cD$ and $k\geq 2$
we have
\begin{align*}
\ex^x[\uptau^k] &=\ex^x\left[\int_0^\infty k (\uptau-t)^{k-1} \Ind_{\{\uptau> t\}} \, \D{t}\right]
\\
&=\ex^x\left[\int_0^\infty k \ex^x\left [(\uptau-t)^{k-1} \Ind_{\{\uptau> t\}}\Big|
\mathcal{F}_{\uptau\wedge t}\right] \, \D{t}\right]
\\
&=\ex^x\left[\int_0^\infty k  \Ind_{\{\uptau> t\}} \ex^{X_t}[\uptau^{k-1}] \, \D{t}\right]
\\
&\leq k \sup_{x\in\cD} \ex^{x}[\uptau^{k-1}]\, \sup_{x\in\cD} \ex^{x}\left [\uptau\right].
\end{align*}
This implies
$$
\sup_{x\in\cD} \ex^x[\uptau^k]\leq k \sup_{x\in\cD} \ex^{x}[\uptau^{k-1}]\,
\sup_{x\in\cD} \ex^{x}[\uptau]\leq \cdots\leq k!\, \left(\sup_{x\in\cD}
\ex^{x}[\uptau]\right)^k.
$$
This proves the first part of the lemma. The remaining part of the claim follows from \eqref{L3.1} and
\cite[Rem.~4.8]{SL}.
\end{proof}

The following will be key objects in this paper. Consider the operator $H$ as given by \eqref{BernLapl}. When
applied in an equation for a domain $\cD$, we use the notation $H^{\cD, V}$. Also, note that in the second
definition below (parabolic case) we assume, more generally, that $V: \RR^+ \times \Rd \to \RR$.
\begin{definition}\label{D3.1}
A function $\varphi\in\cC(\Rd)$ is said to be an \emph{(elliptic) weak sub-solution} to
$$
H^{\cD, V} \varphi \leq f \quad \mbox{in}\,\; \cD,
$$
whenever for every $t>0$ and $x\in\cD$,
\begin{equation}\label{D3.1A}
\varphi(x) \leq \ex^x\left[e^{-\int_0^{t\wedge\uptau_\cD} V(X_s)\,\D{s}} \varphi(X_{t\wedge\uptau_\cD})\right] +
\ex^x\left[\int_0^{t\wedge\uptau_\cD} e^{-\int_0^s V(X_{r}) \, \D{r}} f(X_s)\, \D{s}\right]
\end{equation}
holds. A function $\varphi\in \cC([0, T]\times\Rd)$ is said to be a \emph{(parabolic) weak sub-solution} of
$$
-\partial_t\varphi + H^{\cD, V}\varphi\leq f \quad \mbox{in}\,\; Q_T=[0, T)\times\cD,
$$
whenever for every $t\in[0, T)$ and $x\in\cD,$ we have
\begin{align}\label{ED3.1B}
\varphi(t, x)
&\leq \ex^x\left[e^{-\int_0^{(T-t)\wedge\uptau_\cD} V(t+s, X_s)\,\D{s}}
\varphi(t+(T-t)\wedge\uptau_\cD, X_{(T-t)\wedge\uptau_\cD})\right]
\nonumber
\\
&\,\quad +
\ex^x\left[\int_0^{(T-t)\wedge\uptau_\cD} e^{-\int_0^s V(t+r, X_{r}) \, \D{r}} f(t+s,X_s)\, \D{s}\right].
\end{align}
\end{definition}

Now we are ready to prove our elliptic ABP-type estimate.
\begin{theorem}[\textbf{Elliptic ABP estimate}]
\label{T1.6}
Let $\Psi \in \cB_0$ be strictly increasing and satisfy Assumption~\ref{WLSC}(i). Furthermore, let $V\geq 0$, and
$\varphi$ be any bounded weak sub-solution of
$$
H^{\cD, V}\varphi \leq f \quad \mbox{in}\,\; \cD,
$$
with $f\in L^p(\cD)$, for some $p>\frac{d}{2{\underline\mu}}$. Then there exists a constant $C = C(p, d, \Psi)$ such that
\begin{equation}\label{ET1.6AA}
\sup_{\cD} \varphi^+ \leq \sup_{\cD^c} \varphi^+ +
C \left(1 + \frac{1}{\left(\Psi([\diam \cD]^{-2})\right)^\frac{k}{p'}}\right) \norm{f}_{p, \cD},
\end{equation}
where $k=\lceil \frac{p}{p-1}\rceil + 1$, $\frac{1}{p} + \frac{1}{p'}=1$, and $\norm{\cdot}_{p, \cD}$ denotes the $L^p$
norm on $\cD$.
\end{theorem}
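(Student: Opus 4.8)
The plan is to reduce the inequality to a single bound on the quantity
$$
u(x) \df \ex^x\Big[\int_0^{\uptau_\cD}\abs{f(X_s)}\,\D s\Big], \qquad x\in\cD,
$$
and then to control $u$ by a Krylov-type argument built from the on-diagonal density bound of \cref{L2.1} together with the exit-time moment bounds of \cref{L3.1}. The reduction uses the sign condition: since $V\ge 0$, both exponential weights in \eqref{D3.1A} lie in $(0,1]$, whence $e^{-\int_0^{t\wedge\uptau_\cD}V(X_s)\D s}\varphi(X_{t\wedge\uptau_\cD})\le\varphi^+(X_{t\wedge\uptau_\cD})$ and $e^{-\int_0^s V(X_r)\D r}f(X_s)\le\abs{f(X_s)}$ pointwise, so the weak sub-solution property gives
$$
\varphi(x) \;\le\; \ex^x\big[\varphi^+(X_{t\wedge\uptau_\cD})\big] \;+\; \ex^x\Big[\int_0^{t\wedge\uptau_\cD}\abs{f(X_s)}\,\D s\Big], \qquad x\in\cD,\ t>0.
$$
As $\diam\cD<\infty$ forces $\sup_{x\in\cD}\ex^x[\uptau_\cD]<\infty$, and hence $\uptau_\cD<\infty$ $\Prob^x$-a.s., I let $t\to\infty$: boundedness of $\varphi$ with dominated convergence (first term) and monotone convergence (second term) yields $\varphi(x)\le\ex^x[\varphi^+(X_{\uptau_\cD})]+u(x)$, and since $X_{\uptau_\cD}\in\cD^c$ a.s. by right-continuity of the paths, $\ex^x[\varphi^+(X_{\uptau_\cD})]\le\sup_{\cD^c}\varphi^+$. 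Replacing $f$ by $\abs{f}$ (which only makes $\varphi$ more of a sub-solution), I may assume $f\ge 0$, so all Fubini manipulations below are for non-negative integrands, and it remains to show $\sup_{x\in\cD}u(x)\le C\big(1+(\Psi([\diam\cD]^{-2}))^{-k/p'}\big)\norm{f}_{p,\cD}$; the theorem then follows by taking the supremum over $x\in\cD$ (the case $\sup_\cD\varphi\le0$ being trivial).

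The heart of the matter is a short-time estimate. Using Tonelli, the inclusion $\{s<\uptau_\cD\}\subseteq\{X_s\in\cD\}$, the fact that $X_s$ has density $q_s(x-\cdot)$ under $\Prob^x$, and H\"older's inequality,
$$
\ex^x\Big[\int_0^{\kappa_2\wedge\uptau_\cD}f(X_s)\,\D s\Big] \;\le\; \int_0^{\kappa_2}\!\int_\cD q_s(x-y)f(y)\,\D y\,\D s \;\le\; \Big(\int_0^{\kappa_2}\norm{q_s(x-\cdot)}_{L^{p'}(\cD)}\,\D s\Big)\norm{f}_{p,\cD},
$$
where $\kappa_1,\kappa_2$ are as in \cref{L2.1}. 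Since $\int_{\Rd}q_s=1$ and $q_s(\cdot)\le\kappa_1 s^{-d/(2{\underline\mu})}$ for $s\le\kappa_2$, one has $\norm{q_s(x-\cdot)}_{p'}^{p'}\le\norm{q_s}_\infty^{p'-1}\le\big(\kappa_1 s^{-d/(2{\underline\mu})}\big)^{p'-1}$, i.e. $\norm{q_s(x-\cdot)}_{p'}\le\kappa_1^{1/p}s^{-d/(2{\underline\mu}p)}$, which is integrable on $(0,\kappa_2]$ exactly because $p>\frac{d}{2{\underline\mu}}$ — this is where the scaling hypothesis enters. Hence $M\df\sup_{x\in\cD}\ex^x[\int_0^{\kappa_2\wedge\uptau_\cD}f(X_s)\,\D s]\le C_1\norm{f}_{p,\cD}$ with $C_1=C_1(p,d,\Psi)$.

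To globalize I iterate the Markov property at the deterministic times $n\kappa_2$: writing $\int_0^{\uptau_\cD}=\sum_{n\ge0}\Ind_{\{\uptau_\cD>n\kappa_2\}}\int_{n\kappa_2}^{(n+1)\kappa_2\wedge\uptau_\cD}$ and conditioning on $\cF_{n\kappa_2}$ gives $u(x)\le M\sum_{n\ge0}\Prob^x(\uptau_\cD>n\kappa_2)$. By Markov's inequality and the moment bound of \cref{L3.1}, $\Prob^x(\uptau_\cD>n\kappa_2)\le\min\{1,(a/n)^k\}$ for $n\ge1$, where $a\df c_k^{1/k}\big(\kappa_2\,\Psi([\diam\cD]^{-2})\big)^{-1}$ and $k=\lceil p'\rceil+1\ge2$; summing the series (split at $n\approx a$) gives $\sum_{n\ge0}\Prob^x(\uptau_\cD>n\kappa_2)\le C(1+a)$. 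Thus $\sup_\cD u\le C_1 C\big(1+c_k^{1/k}(\kappa_2\Psi([\diam\cD]^{-2}))^{-1}\big)\norm{f}_{p,\cD}$, and since $k/p'\ge1$ implies $1+t\le 2(1+t^{k/p'})$ for $t\ge0$, this is bounded by $C'\big(1+(\Psi([\diam\cD]^{-2}))^{-k/p'}\big)\norm{f}_{p,\cD}$ with $C'=C'(p,d,\Psi)$, as required.

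The step I expect to be the main obstacle is the short-time estimate: it is the only place where the scaling assumption $p>\frac{d}{2{\underline\mu}}$ is genuinely used, through the integrability of $s\mapsto s^{-d/(2{\underline\mu}p)}$ near $0$ powered by \cref{L2.1}, and one must be careful that the resulting constant depends only on $p,d,\Psi$ and not on the domain. The subsequent passage to a global-in-time bound is largely bookkeeping, the one point needing care being to keep the $\diam\cD$-dependence explicit, for which the moment estimates of \cref{L3.1} are tailor-made. (In fact this argument produces the bound with exponent $1$ in place of $k/p'$, which is only stronger, so the stated estimate follows a fortiori.)
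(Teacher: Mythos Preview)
Your proof is correct and shares the paper's overall architecture: the same reduction to bounding $\ex^x\big[\int_0^{\uptau_\cD}|f(X_s)|\,\D s\big]$, and the identical short-time estimate via \cref{L2.1} and H\"older, which is indeed the crux where $p>\frac{d}{2\underline\mu}$ enters.

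The one genuine difference is in the long-time part. The paper splits the time integral at $\kappa_2$ and, for $s\ge\kappa_2$, applies H\"older in $s$ to the pair $\Prob^x(\uptau_\cD>s)^{1/p'}\,\ex^x[|f(X_s)|^p]^{1/p}$; the second factor is controlled by a separate large-time density bound $\sup_{s\ge\kappa_2}q_s\le q_{\kappa_2}(0)$, and the first by Markov's inequality with the $k$-th moment from \cref{L3.1}, yielding $\int_{\kappa_2}^\infty s^{-k/p'}\,\D s$ and the exponent $k/p'$ in the final estimate. Your discrete Markov iteration at times $n\kappa_2$ sidesteps the large-time density bound entirely---you only ever use the short-time estimate $M$---and produces the exponent $1$ instead of $k/p'$, which as you observe is stronger. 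Both routes are short; yours is a touch more economical and sharper, the paper's is perhaps more transparent about how the density decay and the exit-time tails combine.
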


\begin{proof}
For simplicity of notation we extend $f$ by zero outside of $\cD$. It is readily seen from \eqref{D3.1A} that
\begin{align*}
\varphi(x) &\leq \ex^x\left[e^{-\int_0^{t\wedge\uptau_\cD} V(X_s)\,\D{s}} \varphi^+(X_{t\wedge\uptau_\cD})\right] +
\ex^x\left[\int_0^{t\wedge\uptau_\cD} e^{-\int_0^s V(X_{r}) \, \D{r}} f(X_s)\, \D{s}\right]
\\
&\leq \ex^x\left[\varphi^+(t\wedge\uptau_\cD)\right] +
\ex^x\left[\int_0^{t\wedge\uptau_\cD} \abs{f(X_s)}\, \D{s}\right],
\end{align*}
where $\varphi^+$ denotes the positive part of $\varphi$. Letting $t\to\infty$ and applying standard convergence
theorems, we get
\begin{equation}\label{ET1.6B}
\varphi(x)\leq \sup_{\cD^c}\varphi^+ + \ex^x\left[\int_0^{\uptau_\cD} \abs{f(X_s)}\, \D{s}\right].
\end{equation}
Thus to obtain \eqref{ET1.6AA} we only need to estimate the rightmost term in \eqref{ET1.6B}. Note that
by Lemma~\ref{L2.1} we have
$$
q_s(x, y)\leq \kappa_1 s^{-\frac{d}{2{\underline\mu}}}, \quad s\in (0, \kappa_2], \; x, y \in \Rd,
$$
for some constants $\kappa_1, \kappa_2$, where $q_s$ is the transition probability density of $\pro X$ at time $s$.
Since for $s\geq \kappa_2$ we have
$$
\sup_{x \in \Rd} q_s(x) =\frac{1}{(2\pi)^d}\int_{\Rd} e^{-i x\cdot y} e^{-s\Psi(\abs{y}^2)} \D y
\leq
\frac{1}{(2\pi)^d}\int_{\Rd}  e^{-s\Psi(\abs{y}^2)} \D y
\leq
\frac{1}{(2\pi)^d}\int_{\Rd}  e^{-\kappa_2\Psi(\abs{y}^2)} \D y,
$$
we obtain
\begin{equation}\label{AB100}
\sup_{s\geq \kappa_2}\sup_{x, y \in \Rd}q_s(x, y)\leq q_{\kappa_2}(0).
\end{equation}
Next note that
\begin{align*}
\ex^x\left[\int_0^{\uptau_\cD} \abs{f(X_s)}\, \D{s}\right]
&=
\ex^x\left[\int_0^{\infty}\Ind_{\{\uptau_\cD>s\}} \abs{f(X_s)}\, \D{s}\right]
\\
& \leq
\ex^x\left[\int_0^{\kappa_2} \abs{f(X_s)}\, \D{s}\right] +
\ex^x\left[\int_{\kappa_2}^{\infty}\Ind_{\{\uptau_\cD>s\}} \abs{f(X_s)}\, \D{s}\right].
\end{align*}
The first term above can be estimated as
\begin{align*}
\ex^x\left[\int_0^{\kappa_2} \abs{f(X_s)}\, \D{s}\right] &=\int_0^{\kappa_2} \int_{\Rd} \abs{f(y)} q_s(x, y)\D{y} \D{s}
\leq \norm{f}_{p} \int_0^{\kappa_2} \norm{q_s(x, \cdot)}_{p'} \D{s}
\\
&= \norm{f}_{p} \int_0^{\kappa_2} \left[\int_{\Rd} (q_s(x, y))^{p'} \D{y}\right]^{\nicefrac{1}{p'}} \D{s}
\\
&\leq \norm{f}_{p} \int_0^{\kappa_2} \left[(\kappa_1 s^{-\frac{d}{2{\underline\mu}}})^{p'-1} \int_{\Rd} q_s(x, y) \D{y}\right]^{\nicefrac{1}{p'}} \D{s}
\\
&= \kappa_1^{\nicefrac{1}{p}} \norm{f}_{p} \int_0^{\kappa_2} s^{-\frac{d}{2{\underline\mu} p}} \D{s}
= \frac{2{\underline\mu} p}{2{\underline\mu} p - d}\, \kappa_1^{\nicefrac{1}{p}}\, \kappa_2^{\frac{2{\underline\mu} p -d}{2{\underline\mu} p}} \norm{f}_{p} \,,
\end{align*}
where $p, p'$ are H\"older-conjugate exponents and $\norm{\cdot}_p$ denotes the $L^p$ norm over $\Rd$.
To deal with the second term choose $k\in\NN$ with $k>p'$. Then
\begin{align*}
\ex^x\left[\int_{\kappa_2}^{\infty}\Ind_{\{\uptau_\cD>s\}} \abs{f(X_s)}\, \D{s}\right]
& \leq
\int_{\kappa_2}^\infty (\Prob^x(\uptau_\cD> s))^{\frac{1}{p'}}\ex^x[\abs{f(X_s)}^p]^{\frac{1}{p}}\, \D{s}
\\
&\leq
(q_{\kappa_2}(0))^{\frac{1}{p}} \norm{f}_{p} \int_{\kappa_2}^\infty (\Prob^x(\uptau_\cD> s))^{\frac{1}{p'}}\, \D{s}
\\
&\leq
(q_{\kappa_2}(0))^{\frac{1}{p}} \norm{f}_{p} \int_{\kappa_2}^\infty s^{-\frac{k}{p'}} \ex^x[\uptau^k]^{\frac{1}{p'}}\, \D{s}
\\
& \leq
(q_{\kappa_2}(0))^{\frac{1}{p}} \norm{f}_{p}\, \frac{p'}{k-p'}\, \kappa_2^{\frac{p'-k}{p'}}
\frac{c_k}{\left(\Psi([\diam \cD]^{-2})\right)^\frac{k}{p'}},
\end{align*}
where in the last line we used Lemma~\ref{L3.1}. This completes the proof.
\end{proof}

\begin{remark}
It is worth pointing out that for the case of the classical Laplacian (i.e., when $\Psi(u)=u$ and ${\underline\mu}=1$), the ABP estimate
in Theorem~\ref{T1.6} is valid for $p>\frac{d}{2}$. This should be compared with \cite{C95}.
\end{remark}

A similar probabilistic approach can be used to obtain a parabolic ABP estimate. Define the parabolic domain to be
$Q_T = [0, T)\times \cD$.
\begin{theorem}[\textbf{Parabolic ABP estimate}]
\label{parabo}
Let $\Psi \in \cB_0$ be strictly increasing and satisfy Assumption~\ref{WLSC}(i). Also, let $V\geq 0$, and $\varphi$ be
a bounded parabolic weak sub-solution of
$$
-\partial_t\varphi + H^{\cD, V}\varphi\leq f \quad \mbox{in}\,\; Q_T,
$$
with $f\in L^p(Q_T)$, for some $p-1>\frac{d}{2{\underline\mu}}$. Then there exists a constant $C = C(p, d, \Psi)$ such that
\begin{equation}\label{ET3.2B}
\sup_{Q_T} \varphi^+
\leq
\left(\sup_{[0, T)\times\cD^c} \varphi^+ \vee \sup_{\{T\}\times\cD} \varphi^+\right)+
C \left[1 + \frac{1}{\left(\Psi([\diam \cD]^{-2})\right)^\frac{2}{p'}}\right] \norm{f}_{p, Q_T},
\end{equation}
where $p, p'$ are H\"older-conjugate exponents.
\end{theorem}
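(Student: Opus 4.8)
The plan is to run the probabilistic argument of Theorem~\ref{T1.6} with the time variable carried along, paying for the time integration by one \emph{extra} application of H\"older's inequality; this is exactly what forces the strengthened hypothesis $p-1>\frac{d}{2{\underline\mu}}$ and produces the exponent $\frac{2}{p'}$ in place of the elliptic $\frac{k}{p'}$. Fix $t\in[0,T)$ and $x\in\cD$, extend $f$ by zero outside $Q_T$, and abbreviate $g_s(\cdot)=\abs{f(t+s,\cdot)}$. Since $V\geq0$, all exponential weights in \eqref{ED3.1B} are bounded by $1$, so
\begin{equation*}
\varphi(t,x)\leq\ex^x\bigl[\varphi^+\bigl(t+(T-t)\wedge\uptau_\cD,\,X_{(T-t)\wedge\uptau_\cD}\bigr)\bigr]+\ex^x\Bigl[\int_0^{\uptau_\cD}g_s(X_s)\,\D s\Bigr].
\end{equation*}
For the first term I decompose according to $\{\uptau_\cD<T-t\}$, on which the stopped point lies in $\cD^c$ (recall $\partial\cD\subset\cD^c$, together with Assumption~\ref{As3.1}) at time coordinate in $[0,T)$, and $\{\uptau_\cD\geq T-t\}$, on which it lies in $\bar\cD$ at time coordinate $T$; by continuity of $\varphi$ this term is at most $\sup_{[0,T)\times\cD^c}\varphi^+\vee\sup_{\{T\}\times\cD}\varphi^+$. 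It therefore remains to estimate $I(x):=\ex^x\bigl[\int_0^\infty\Ind_{\{\uptau_\cD>s\}}g_s(X_s)\,\D s\bigr]$, where the harmless factor $\Ind_{\{s<T-t\}}$ has been dropped.

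As in Theorem~\ref{T1.6} I split $I(x)$ at the time $\kappa_2$ from Lemma~\ref{L2.1}. On $(0,\kappa_2]$ the elliptic bound $\ex^x[g_s(X_s)]\leq\norm{g_s}_p\norm{q_s(x,\cdot)}_{p'}\leq\kappa_1^{1/p}s^{-d/(2{\underline\mu}p)}\norm{g_s}_p$ is unchanged; a \emph{second} H\"older inequality in $s$ then gives
\begin{equation*}
\int_0^{\kappa_2}\ex^x[g_s(X_s)]\,\D s\leq\kappa_1^{1/p}\Bigl(\int_0^{\kappa_2}\norm{g_s}_p^p\,\D s\Bigr)^{1/p}\Bigl(\int_0^{\kappa_2}s^{-dp'/(2{\underline\mu}p)}\,\D s\Bigr)^{1/p'}\leq C_1\norm{f}_{p,Q_T},
\end{equation*}
the last $s$-integral being finite precisely because $\frac{dp'}{2{\underline\mu}p}=\frac{d}{2{\underline\mu}(p-1)}<1$, and $\int_0^{\kappa_2}\norm{g_s}_p^p\,\D s=\int_t^T\norm{f(r,\cdot)}_{L^p(\cD)}^p\,\D r\leq\norm{f}_{p,Q_T}^p$ after the zero-extension. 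On $[\kappa_2,\infty)$ I use H\"older in the probability space, the bound \eqref{AB100} (so that $\ex^x[g_s(X_s)^p]\leq q_{\kappa_2}(0)\norm{g_s}_p^p$), and once more H\"older in $s$:
\begin{equation*}
\int_{\kappa_2}^\infty\ex^x\bigl[\Ind_{\{\uptau_\cD>s\}}g_s(X_s)\bigr]\,\D s\leq(q_{\kappa_2}(0))^{1/p}\Bigl(\int_{\kappa_2}^\infty\norm{g_s}_p^p\,\D s\Bigr)^{1/p}\Bigl(\int_{\kappa_2}^\infty\Prob^x(\uptau_\cD>s)\,\D s\Bigr)^{1/p'}.
\end{equation*}
By Markov's inequality with the second moment, $\Prob^x(\uptau_\cD>s)\leq s^{-2}\ex^x[\uptau_\cD^2]$, and Lemma~\ref{L3.1} bounds $\int_{\kappa_2}^\infty\Prob^x(\uptau_\cD>s)\,\D s$ by $\kappa_2^{-1}c_2\,(\Psi([\diam\cD]^{-2}))^{-2}$, so this piece is at most $C_2\,(\Psi([\diam\cD]^{-2}))^{-2/p'}\norm{f}_{p,Q_T}$. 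Adding the two pieces to the boundary/terminal term and collecting constants into $C=C(p,d,\Psi)$ via $C_1+C_2y\leq C(1+y)$ yields \eqref{ET3.2B}.

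The two H\"older steps and the convergence of the elementary $s$-integrals are routine. The points that need care are: the bookkeeping of the time shift in passing from $\int_0^{\kappa_2}\norm{g_s}_p^p\,\D s$ to $\norm{f}_{p,Q_T}^p$; the boundary/terminal decomposition of the stopped term (one wants $X_{(T-t)\wedge\uptau_\cD}\in\bar\cD$ on $\{\uptau_\cD\geq T-t\}$, using a.s.\ continuity of $X$ at the deterministic time $T-t$); and locating the role of the hypothesis $p-1>\frac{d}{2{\underline\mu}}$ — it is exactly the condition for the near-diagonal $s$-integral to converge after the extra H\"older step, the same step that replaces the elliptic exponent $\frac{k}{p'}$ by $\frac{2}{p'}$, the $2$ being the minimal integer moment of $\uptau_\cD$ needed to make the tail integral converge.
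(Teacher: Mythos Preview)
Your proof is correct and follows essentially the same approach as the paper. The only cosmetic differences are that the paper applies H\"older jointly in space-time for the near-diagonal piece (yielding the identical integral $\int_0^{\kappa_2}s^{-d/(2{\underline\mu}(p-1))}\,\D s$ as your two-step H\"older) and organizes the argument by cases $T-t\leq\kappa_2$ versus $T-t>\kappa_2$ rather than extending $f$ by zero and integrating to $\infty$; your explicit justification of the boundary/terminal decomposition is in fact more detailed than the paper's.
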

\begin{proof}
From the representation \eqref{ED3.1B} it follows for every $t\in[0, T)$ that
\begin{align}\label{ET3.2A}
\varphi(t, x)
&\leq
\underbrace{\ex^x\left[e^{-\int_0^{(T-t)\wedge\uptau_\cD} V(t+s, X_s)\,\D{s}}
\varphi(t+(T-t)\wedge\uptau_\cD, X_{(T-t)\wedge\uptau_\cD})\right]}_{= \, (i)}
\nonumber
\\
&\,\quad +
\underbrace{\ex^x\left[\int_0^{(T-t)\wedge\uptau_\cD} e^{-\int_0^s V(t+r, X_{r}) \, \D{r}} f(t+s,X_s)\, \D{s}\right]}_
{= \, (ii)}
\end{align}
holds. It is straightforward to see that term (i) in \eqref{ET3.2B} comes from the first term in \eqref{ET3.2A},
as $V\geq 0$. Thus we estimate term (ii) in \eqref{ET3.2A}. We extend $f$ outside of $Q_T$ by $0$, and first suppose
that $T-t\leq \kappa_2$ where $\kappa_2$ is same as in Lemma~\ref{L2.1}. Then
\begin{align*}
(ii) \,
&\leq
\ex^x\left[\int_0^{T-t}  \abs{f(t+s,X_s)}\, \D{s}\right]
= \int_0^{T-t} \int_{\Rd} \abs{f(t+s, y)} q_s(x, y)\, \D{y}\, \D{s}
\\
&\leq
\norm{f}_{p, Q_T} \left[\int_0^{\kappa_2} \int_{\Rd} q^{p'}_s(x, y) \D{y}\, \D{s}\right]^{\frac{1}{p'}}
\leq \kappa_3  \norm{f}_{p, Q_T} \left[\int_0^{\kappa_2} s^{-\frac{d}{2{\underline\mu}(p-1)}} \D{s} \right]^{\frac{1}{p'}}
\\
&=  \kappa_3\,\frac{2{\underline\mu} (p-1)}{2{\underline\mu}(p-1)-d} \,  \norm{f}_{p, Q_T}\, \kappa_2^{\frac{2{\underline\mu}(p-1)-d}{2{\underline\mu} (p-1)}}\,.
\end{align*}
Next suppose $T-t>\kappa_2$. Splitting up the domain of integration, we observe that the rightmost term in
\eqref{ET3.2A} is dominated by
$$
\ex^x\left[\int_0^{\kappa_2}  \abs{f(t+s,X_s)}\, \D{s}\right] + \ex^x\left[\int_{\kappa_2}^{(T-t)\wedge\uptau_\cD}
\abs{f(t+s,X_s)}\, \D{s}\right],
$$
whose first term can be treated as above. Thus we estimate the second term and obtain
\begin{align*}
\ex^x\left[\int_{\kappa_2}^{(T-t)\wedge\uptau_\cD}  \abs{f(t+s,X_s)}\, \D{s}\right]
& =
\ex^x\left[\int_{\kappa_2}^{T-t}\Ind_{\{\uptau_\cD>s\}}  \abs{f(t+s,X_s)}\, \D{s}\right]
\\
&\leq
\left[\int_{\kappa_2}^{T-t}\Prob^x(\uptau_\cD>s)^{\frac{1}{p'}}  \ex^x[\abs{f(t+s,X_s)}^p]^{\frac{1}{p}} \D{s}\right]
\\
&\leq
(q_{\kappa_2}(0))^{\nicefrac{1}{p}} \left[\int_{\kappa_2}^{T-t}\Prob^x(\uptau_\cD>s)^{\frac{1}{p'}}
\norm{f(t+s,\cdot)}_{p, \cD}\,  \D{s}\right]
\\
&\leq
(q_{\kappa_2}(0))^{\nicefrac{1}{p}}\norm{f}_{p, Q_T} \left[\int_{\kappa_2}^{T-t}\Prob^x(\uptau_\cD>s)  \,
\D{s}\right]^{\frac{1}{p'}}
\\
& \leq
(q_{\kappa_2}(0))^{\nicefrac{1}{p}}\norm{f}_{p, Q_T} \left[\int_{\kappa_2}^{\infty}\frac{1}{s^2} \ex^x[\uptau_\cD^2]
\,  \D{s}\right]^{\frac{1}{p'}}
\\
&\leq
\kappa_4 \norm{f}_{p, Q_T} \frac{1}{\left(\Psi([\diam \cD]^{-2})\right)^\frac{2}{p'}},
\end{align*}
where $\kappa_4$ is a suitable constant, and where we used Lemma~\ref{L3.1} in the last step.
\end{proof}

\subsection{Existence and uniqueness of weak solutions}
In this section we use Theorem~\ref{T1.6} to show the existence and uniqueness of weak solutions of the
Dirichlet problem
\begin{equation}
\label{diri}
\left\{\begin{array}{ll}
H^{\cD, V}\varphi = f, \quad \text{in}\, \; \cD \\
\varphi = 0,\, \qquad\quad \; \text{in}\; \cD^c.
\end{array} \right.
\end{equation}

Define the operator
\begin{equation}
\label{semi}
T^{\cD, V}_t f(x)=\ex^x\left[e^{-\int_0^t V(X_s)\, \D{s}} f(X_t)\Ind_{\{t<\uptau_\cD\}}\right], \quad t>0, \, x \in \cD.
\end{equation}
It is shown in \cite[Lem.~3.1]{BL} that $\semi {T^{\cD, V}}$, $T^{\cD, V}_t: L^p(\cD)\to L^p(\cD)$, is a strongly
continuous semigroup, self-adjoint on $L^2(\cD)$, with infinitesimal generator $-H^{\cD, V}$. Moreover, if $V$ is
bounded and $\cD$ satisfies Assumption~\ref{As3.1}, then $T^{\cD, V}_t: \cC(\bar\cD)\to \cC_0(\bar\cD)$, $t > 0$,
and $\semi {T^{\cD, V}}$ is a strongly continuous semigroup.

\begin{proposition}\label{P1.2}
Let $\Psi \in \cB_0$ be strictly increasing and satisfy Assumption~\ref{WLSC}(i). Furthermore, let Assumption~\ref{As3.1}
hold and consider $V, f \in \cC(\bar\cD)$, $V\geq 0$. Then there exists a unique weak-solution $\varphi\in\cC(\Rd)$ of
\eqref{diri}, that is, for every $t\geq 0$ we have for all $x\in\cD$
\begin{equation}\label{EP1.2B}
\varphi(x) = \ex^x\left[e^{-\int_0^{t\wedge\uptau_\cD} V(X_s)\,\D{s}} \varphi(X_{t\wedge\uptau_\cD})\right] +
\ex^x\left[\int_0^{t\wedge\uptau_\cD} e^{-\int_0^s V(X_{r}) \, \D{r}} f(X_s)\, \D{s}\right].
\end{equation}
\end{proposition}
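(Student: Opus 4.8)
The plan is to produce the solution explicitly as the $0$-resolvent of the killed Feynman--Kac semigroup $\semi{T^{\cD,V}}$, verify it lies in $\cC(\Rd)$ and satisfies \eqref{EP1.2B}, and then deduce uniqueness from the elliptic ABP estimate. For existence, set, for $x\in\cD$,
$$
\varphi(x) \df \int_0^\infty T^{\cD,V}_s f(x)\,\D s = \ex^x\!\left[\int_0^{\uptau_\cD} e^{-\int_0^s V(X_r)\,\D r} f(X_s)\,\D s\right],
$$
the second equality by Tonelli, and extend $\varphi$ by $0$ on $\cD^c$. Since $V\geq 0$ one has $\abs{T^{\cD,V}_s f(x)}\leq \norm{f}_\infty\,\Prob^x(\uptau_\cD>s)$, and $\sup_{x\in\cD}\Prob^x(\uptau_\cD>s)$ decays exponentially in $s$: by the Markov property $\sup_x\Prob^x(\uptau_\cD>nm)\leq\big(\sup_x\Prob^x(\uptau_\cD>m)\big)^n$, and $\sup_x\Prob^x(\uptau_\cD>m)<1$ for $m$ large because $\sup_x\ex^x[\uptau_\cD]<\infty$. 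Hence the integral converges absolutely and uniformly on $\cD$, so $\norm{\varphi}_\infty<\infty$. Because $f\in\cC(\bar\cD)$ and $V$ is bounded, the cited property $T^{\cD,V}_s:\cC(\bar\cD)\to\cC_0(\bar\cD)$ for $s>0$ makes each $T^{\cD,V}_s f$ an element of the closed subspace $\cC_0(\bar\cD)\subset\cC(\bar\cD)$; uniform convergence of the integral then gives $\varphi|_{\bar\cD}\in\cC_0(\bar\cD)$, so $\varphi$ vanishes continuously on $\partial\cD$ and therefore $\varphi\in\cC(\Rd)$. Finally, for $x\in\cD^c$ either $x$ is an exterior point or $x\in\partial\cD$ and Assumption~\ref{As3.1} applies, so $\Prob^x(\uptau_\cD=0)=1$ and $\varphi(x)=0$, consistently with the extension.

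Next I would verify \eqref{EP1.2B}. Fix $t>0$ and $x\in\cD$ (the case $t=0$ being trivial). Splitting the time integral defining $\varphi(x)$ at $t\wedge\uptau_\cD$ produces the second term of \eqref{EP1.2B} plus the remainder $R\df\ex^x\big[\int_{t\wedge\uptau_\cD}^{\uptau_\cD} e^{-\int_0^s V(X_r)\,\D r} f(X_s)\,\D s\big]$. On $\{t\geq\uptau_\cD\}$ the range of integration is empty, so $R=\ex^x\big[\Ind_{\{t<\uptau_\cD\}}\int_{t}^{\uptau_\cD}(\cdots)\,\D s\big]$; on $\{t<\uptau_\cD\}$, the substitution $s=t+u$, additivity of $\int_0^s V(X_r)\,\D r$, and the relations $\uptau_\cD\circ\theta_t=\uptau_\cD-t$ and $X_{t+u}=X_u\circ\theta_t$ valid there, rewrite the inner integral as $e^{-\int_0^t V(X_r)\,\D r}\cdot\big(\int_0^{\uptau_\cD}e^{-\int_0^u V(X_v)\,\D v}f(X_u)\,\D u\big)\circ\theta_t$. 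Since $\{t<\uptau_\cD\}\in\cF_t$ and the prefactor is $\cF_t$-measurable, the strong Markov property at $t$ converts the shifted bracket into $\varphi(X_t)$, whence $R=\ex^x\big[\Ind_{\{t<\uptau_\cD\}}e^{-\int_0^{t\wedge\uptau_\cD} V(X_r)\,\D r}\varphi(X_{t\wedge\uptau_\cD})\big]$. On the complementary event $X_{t\wedge\uptau_\cD}=X_{\uptau_\cD}\in\cD^c$, where $\varphi=0$ --- and here it is crucial that $\varphi$ vanishes on the whole exterior rather than only on $\partial\cD$, since $\pro X$ may leave $\cD$ by a jump --- so adding this null contribution recovers the first term of \eqref{EP1.2B}, which proves \eqref{EP1.2B}.

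For uniqueness, if $\varphi_1,\varphi_2\in\cC(\Rd)$ both solve \eqref{diri}, then $w=\varphi_1-\varphi_2$ is bounded (continuous and supported in the compact set $\bar\cD$), vanishes on $\cD^c$, and, by linearity of the right-hand side, satisfies \eqref{EP1.2B} with $f\equiv 0$; hence $w$, and likewise $-w$, is a bounded weak sub-solution of $H^{\cD,V}u\leq 0$ in $\cD$ in the sense of \eqref{D3.1A}. Applying Theorem~\ref{T1.6} with $f\equiv 0$ (any admissible $p$) gives $\sup_\cD w^+\leq\sup_{\cD^c}w^+=0$ and $\sup_\cD(-w)^+\leq 0$, so $w\equiv 0$ on $\cD$, hence on $\Rd$. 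The step I expect to demand the most care is the verification of \eqref{EP1.2B}: the Markov-shift bookkeeping, and above all the boundary term, which goes through only because $\varphi$ is defined to vanish on all of $\cD^c$ --- reflecting the jumps of $\pro X$; the other place where the hypotheses genuinely enter is the continuity of $\varphi$ across $\partial\cD$, resting on Assumption~\ref{As3.1} via the $\cC_0(\bar\cD)$-mapping property of $\semi{T^{\cD,V}}$.
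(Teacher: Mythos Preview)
Your proof is correct and reaches the same destination as the paper, but by a more direct road. The paper invokes the Hille--Yosida theorem to produce, for each $\beta>0$, the resolvent $\varphi_\beta=(\beta I+H^{\cD,V})^{-1}f\in\Dom(H^{\cD,V})$; because $\varphi_\beta$ lies in the domain of the generator, one can differentiate $e^{-\beta t}T^{\cD,V}_t\varphi_\beta$ and integrate to obtain the analogue of \eqref{EP1.2B} for $\varphi_\beta$ with potential $V+\beta$. Letting $t\to\infty$ yields the closed formula $\varphi_\beta(x)=\ex^x\big[\int_0^{\uptau_\cD}e^{-\int_0^s(\beta+V)\,\D r}f(X_s)\,\D s\big]$, and then $\beta\downarrow 0$ gives the same $\varphi$ you write down directly; the paper then also appeals to the strong Markov property to recover \eqref{EP1.2B} for $\varphi$ itself. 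Your approach skips the $\beta$-regularisation entirely: you define $\varphi$ as the $0$-resolvent, justify convergence and $\cC_0(\bar\cD)$-membership via the exponential tail of $\Prob^x(\uptau_\cD>s)$ together with the $\cC(\bar\cD)\to\cC_0(\bar\cD)$ mapping property of $T^{\cD,V}_s$, and verify \eqref{EP1.2B} by an explicit Markov-shift computation. This is more elementary and self-contained; the paper's route has the minor advantage of making clear that the approximants $\varphi_\beta$ genuinely lie in $\Dom(H^{\cD,V})$, but for the proposition as stated this is not needed. Your treatment of uniqueness via Theorem~\ref{T1.6} is also more explicit than the paper's, which leaves uniqueness implicit. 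Your emphasis that $\varphi$ must vanish on all of $\cD^c$ (not merely on $\partial\cD$) because $\pro X$ exits by jumps is a point worth keeping.
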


\begin{proof}
Since $V$ is non-negative, $T^{\cD, V}$ is a contraction semigroup. Therefore, for every $\beta>0$ and a given
$f\in\cC(\bar\cD)$, by the Hille-Yosida theorem there exists $\varphi_\beta\in \cC_0(\bar\cD)$ satisfying
$$
(\beta I + H^{\cD, V}) \varphi_\beta = f.
$$
In fact, we have
$$
\varphi_\beta(x) = \int_0^\infty e^{-\beta t} T^{\cD, V}_t f(x)\, \D{t},
$$
and $\varphi_\beta\in \Dom(H^{\cD, V})$. Hence
\begin{align*}
\frac{\D}{\D{t}} \left[e^{-\beta t} T^{\cD, V}_t \varphi_{\beta}\right]
&=
-\beta e^{-\beta t}T^{\cD, V}_t \varphi_\beta + e^{-\beta t} T^{\cD, V}_t (-H^{\cD, V} \varphi_\beta)
\\
&= e^{-\beta t} T^{\cD, V}_t\left(-\beta I -H^{\cD, V}\right) \varphi_\beta
\\
&= - e^{-\beta t} T^{\cD, V}_t f.
\end{align*}
Thus for every $t\geq 0$,
\begin{align*}
\varphi_\beta(x)
&=
e^{-\beta t} T^{\cD, V}_t \varphi_\beta(x) + \int_0^t e^{-\beta s} T^{\cD, V}_s f(x)\, \D{s}
\\
&=
e^{-{\beta} t} T^{\cD, V}_t \varphi_\beta(x) + \int_0^t \ex^x\left[e^{-\int_0^s (\beta+V(X_{r})) \, \D{r}} f(X_s)
\Ind_{\{s<\uptau_\cD\}}\right]\, \D{s}
\\
&= \ex^x\left[e^{-\int_0^{t\wedge\uptau_\cD} (\beta+ V(X_s)\,\D{s})} \varphi_\beta(X_{t\wedge\uptau_\cD})\right]
+ \ex^x\left[\int_0^{t\wedge\uptau_\cD} e^{-\int_0^s (\beta+ V(X_{r})) \, \D{r}} f(X_s)\, \D{s}\right].
\end{align*}
Letting $t\to\infty$, we find
$$
\varphi_\beta(x)= \ex^x\left[\int_0^{\uptau_\cD} e^{-\int_0^s (\beta+ V(X_{r})) \, \D{r}} f(X_s)\, \D{s}\right].
$$
Since $\sup_x\ex^x[\uptau_\cD]<\infty$ and $V\geq 0$, we have that $\varphi_\beta(x)\to\varphi(x)$ as $\beta\to 0$,
where
$$
\varphi(x)= \ex^x\left[\int_0^{\uptau_\cD} e^{-\int_0^s V(X_{r}) \, \D{r}} f(X_s)\, \D{s}\right].
$$
Since $f$ is continuous, it follows that $\varphi\in\cC_0(\bar\cD)$. Thus \eqref{EP1.2B} follows from the strong
Markov property of $\pro X$.
\end{proof}

Now we are ready to prove existence and uniqueness of weak solutions.

\begin{theorem}[\textbf{Existence and uniqueness of weak solution}]
\label{T3.3}
Let Assumptions~\ref{WLSC}(i) and ~\ref{As3.1} hold and $\Psi\in\cB_0$ be strictly increasing. Suppose that $V\geq 0$
is continuous in a neighbourhood of $\cD$, and $f\in L^p(\cD)$ for some $p>\frac{d}{2{\underline\mu}}$. Then there exists a
unique weak-solution $\varphi\in\cC(\Rd)$ of
\eqref{diri}, i.e., for every $t\geq 0$ and $x\in\cD$ we have
\begin{equation}\label{ET3.3A}
\varphi(x) = \ex^x\left[e^{-\int_0^{t\wedge\uptau_\cD} V(X_s)\,\D{s}} \varphi(X_{t\wedge\uptau_\cD})\right] +
\ex^x\left[\int_0^{t\wedge\uptau_\cD} e^{-\int_0^s V(X_{r}) \, \D{r}} f(X_s)\, \D{s}\right].
\end{equation}
\end{theorem}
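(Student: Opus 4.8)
The plan is to construct the solution $\varphi$ as a uniform limit of the continuous-data solutions provided by Proposition~\ref{P1.2}, using the elliptic ABP estimate of Theorem~\ref{T1.6} both to control the approximation and to obtain uniqueness.

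\emph{Existence.} First I would pick a sequence $(f_n)_{n\in\NN}\subset\cC(\bar\cD)$ with $\norm{f_n-f}_{p,\cD}\to 0$, which is possible since $\cC(\bar\cD)$ is dense in $L^p(\cD)$. As $V\geq 0$ is continuous near $\cD$ and Assumption~\ref{As3.1} holds, Proposition~\ref{P1.2} yields for every $n$ a unique $\varphi_n\in\cC_0(\bar\cD)\subset\cC(\Rd)$, with $\varphi_n=0$ on $\cD^c$, satisfying \eqref{EP1.2B} with $f_n$ in place of $f$; each $\varphi_n$ is bounded. Subtracting the representations for $f_n$ and $f_m$ shows that $\varphi_n-\varphi_m$ is a bounded weak sub-solution and super-solution of $H^{\cD,V}(\varphi_n-\varphi_m)=f_n-f_m$ in $\cD$, vanishing on $\cD^c$. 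Applying Theorem~\ref{T1.6} to $\varphi_n-\varphi_m$ and to $\varphi_m-\varphi_n$ (legitimate as $p>\frac{d}{2{\underline\mu}}$) and using $\sup_{\cD^c}(\cdot)^+=0$, we obtain
\begin{equation*}
\sup_{\Rd}\abs{\varphi_n-\varphi_m}=\sup_{\cD}\abs{\varphi_n-\varphi_m}\leq C_{\Psi,\cD,p}\,\norm{f_n-f_m}_{p,\cD},
\end{equation*}
where $C_{\Psi,\cD,p}$ is the constant appearing on the right of \eqref{ET1.6AA}. Hence $(\varphi_n)$ is Cauchy in $\cC(\Rd)$ for the supremum norm and converges uniformly to some bounded $\varphi\in\cC_0(\bar\cD)\subset\cC(\Rd)$ with $\varphi=0$ on $\cD^c$.

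Next I would pass to the limit in \eqref{EP1.2B}. Fix $t\geq 0$ and $x\in\cD$; the left-hand side tends to $\varphi(x)$. Since $V\geq 0$ the exponential weights lie in $[0,1]$, so by the uniform convergence $\varphi_n\to\varphi$ and dominated convergence the first term on the right tends to $\ex^x[e^{-\int_0^{t\wedge\uptau_\cD}V(X_s)\D{s}}\varphi(X_{t\wedge\uptau_\cD})]$. For the second term, the estimate of the rightmost expectation in \eqref{ET1.6B} obtained within the proof of Theorem~\ref{T1.6}, applied to $f_n-f$ extended by zero, produces a constant $C'$ independent of $n$ and $x$ with $\ex^x[\int_0^{\uptau_\cD}\abs{f_n(X_s)-f(X_s)}\,\D{s}]\leq C'\norm{f_n-f}_{p,\cD}\to 0$; hence the second term tends to $\ex^x[\int_0^{t\wedge\uptau_\cD}e^{-\int_0^s V(X_r)\D{r}}f(X_s)\,\D{s}]$. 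Therefore $\varphi$ satisfies \eqref{ET3.3A}, and being in $\cC(\Rd)$ with $\varphi=0$ on $\cD^c$ it is a weak solution of \eqref{diri}.

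\emph{Uniqueness and main difficulty.} If $\varphi,\psi\in\cC(\Rd)$ are two weak solutions of \eqref{diri}, then $w=\varphi-\psi$ is bounded, vanishes on $\cD^c$, and subtracting the representations \eqref{ET3.3A} shows that $w$ is a weak sub-solution and super-solution of $H^{\cD,V}w=0$ in $\cD$; Theorem~\ref{T1.6} with $f\equiv 0$, applied to $w$ and to $-w$, gives $\sup_{\cD}w^+\leq\sup_{\cD^c}w^+=0$ and $\sup_{\cD}(-w)^+\leq 0$, so $w\equiv 0$. The only point that really needs care is that the ABP bound be quantitative and uniform in the data — and, for the source term, uniform in the starting point $x$ — which is exactly what the proof of Theorem~\ref{T1.6} provides; everything else is a routine passage to the limit.
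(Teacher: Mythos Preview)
Your proof is correct, and for uniqueness it agrees with the paper (both invoke Theorem~\ref{T1.6}). For existence, however, you take a somewhat different and in fact more economical route. You keep $V$ fixed and approximate only $f$ by continuous $f_n$, then obtain the Cauchy property of $(\varphi_n)$ in $\cC(\Rd)$ directly from the ABP estimate \eqref{ET1.6AA} applied to $\pm(\varphi_n-\varphi_m)$. The paper instead mollifies \emph{both} $V$ and $f$, and proves that $(\varphi_n)$ is Cauchy by a hands-on contraction argument: it fixes a time $t_0$ with $\sup_{x\in\cD}\Prob^x(\uptau_\cD>t_0)\leq\tfrac12$, plugs $t=t_0$ into the representation \eqref{ET3.3B}, and estimates the difference term by term, arriving at a bound of the form
\[
\sup_{\Rd}\abs{\varphi_n-\varphi_m}\;\lesssim\; (\norm{f_n}_{p,\cD}+\norm{\varphi_n}_\infty)\,\norm{V_n-V_m}_{\infty,\cD}+\norm{f_n-f_m}_{p,\cD}.
\]
Your argument is cleaner for the existence statement itself, since under the stated hypotheses $V$ is already in $\cC(\bar\cD)$ and Proposition~\ref{P1.2} applies directly. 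The paper's detour through a mollified $V_n$ buys something extra: the estimate above is precisely the continuous-dependence bound with respect to \emph{both} $V$ and $f$ that the paper records as a separate theorem immediately afterwards, which your approach does not yield without additional work.
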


\begin{proof}
The uniqueness part is obvious from Theorem~\ref{T1.6}, thus we only need to consider the existence part.
Let $V_n, f_n$ denote suitable mollified versions of $V$ and $f$, respectively, so that
$$
\sup_{\cD}\abs{V_n-V}\to 0 \quad \mbox{and} \quad \norm{f_n-f}_{p, \cD}\to 0, \quad \text{as}\;\; n\to\infty.
$$
By Proposition ~\ref{P1.2} there exists $\varphi_n\in\cC(\Rd)$ satisfying \eqref{diri},
that is,
\begin{equation}\label{ET3.3B}
\varphi_n(x) = \ex^x\left[e^{-\int_0^{t\wedge\uptau_\cD} V_n(X_s)\,\D{s}} \varphi_n(X_{t\wedge\uptau_\cD})\right] +
\ex^x\left[\int_0^{t\wedge\uptau_\cD} e^{-\int_0^s V_n(X_{r}) \, \D{r}} f_n(X_s)\, \D{s}\right],
\quad t\geq 0, \; x\in\cD.
\end{equation}
We claim that
\begin{equation}\label{ET3.3C}
\sup_{\Rd}\abs{\varphi_n-\varphi_m}\to 0, \quad \text{as}\; m,n\to\infty.
\end{equation}
It is readily seen from Theorem~\ref{T1.6} that $\sup_{n\in \mathbb N}\norm{\varphi_n}_\infty<\infty$, since
$V_n\geq 0$. Applying Lemma~\ref{L3.1}, we can find $t_0$ such that
$$
\sup_{x\in\cD}\Prob^x(\uptau_\cD>t_0)\leq \frac{1}{2}.
$$
Using the estimate $|e^{a}-e^{b}|\leq \abs{a-b}$, $a, b\leq 0$, we find for every $x\in\cD$
\begin{align*}
& \hspace{-2cm}
\left|\ex^x\left[e^{-\int_0^{t_0\wedge\uptau_\cD} V_n(X_s)\,\D{s}} \varphi_n(X_{t\wedge\uptau_\cD})\right]-
\ex^x\left[e^{-\int_0^{t_0\wedge\uptau_\cD} V_m(X_s)\,\D{s}} \varphi_m(X_{t\wedge\uptau_\cD})\right]\right|
\\
&\leq  \left|\ex^x\left[\left(e^{-\int_0^{t_0\wedge\uptau_\cD} V_n(X_s)\,\D{s}}-
e^{-\int_0^{t_0\wedge\uptau_\cD} V_m(X_s)\,\D{s}} \right)\varphi_n(X_{t\wedge\uptau_\cD})\right]\right|
\\
&\qquad + \ex^x\left[e^{-\int_0^{t_0\wedge\uptau_\cD} V_m(X_s)\,\D{s}}
\left|\varphi_n(X_{t_0\wedge\uptau_\cD})-\varphi_m(X_{t_0\wedge\uptau_\cD})\right| \Ind_{\{t_0<\uptau_\cD\}}\right]
\\
&\leq \norm{\varphi_n}_\infty\, (t_0\norm{V_n-V_m}_{\infty, \cD})
+ \sup_{\Rd} \abs{\varphi_n-\varphi_m} \, \sup_{x\in\cD}\Prob^x(\uptau_\cD>t_0)
\\
&\leq \norm{\varphi_n}_\infty\, (t_0\norm{V_n-V_m}_{\infty, \cD})
+ \frac{1}{2} \sup_{\Rd} \abs{\varphi_n-\varphi_m}.
\end{align*}

We estimate the difference for the second term in \eqref{ET3.3B} to get
\begin{align*}
&\hspace{-1cm}
\left| \ex^x\left[\int_0^{t_0\wedge\uptau_\cD} e^{-\int_0^s  V_n(X_{r}) \, \D{r}}  f_n(X_s)\, \D{s}\right]
- \ex^x\left[\int_0^{t_0\wedge\uptau_\cD} e^{-\int_0^s  V_m(X_{r}) \, \D{r}}  f_m(X_s)\, \D{s}\right]\right|
\\
&\leq \left| \ex^x\left[\int_0^{t_0\wedge\uptau_\cD}\left( e^{-\int_0^s  V_n(X_{r}) \, \D{r}}-
e^{-\int_0^s  V_m(X_{r})}\right)  f_n(X_s)\, \D{s}\right]\right|
\\
&\quad + \ex^x\left[\int_0^{t_0\wedge\uptau_\cD} e^{-\int_0^s  V_m(X_{r}) \, \D{r}}
\left| f_n(X_s)-f_m(X_s)\right|\, \D{s}\right]
\\
&\leq (t_0\norm{V_n-V_m}_{\infty, \cD})\, \ex^x\left[\int_0^{\uptau_\cD}  \abs{f_n(X_s)}\, \D{s}\right]
 + \ex^x\left[\int_0^{\uptau_\cD}   \left| f_n(X_s)-f_m(X_s)\right|\, \D{s}\right]
\\
&\leq \kappa_3 \big((t_0\norm{V_n-V_m}_{\infty, \cD})\, \norm{f_n}_{p,\cD} + \norm{f_n-f_m}_{p,\cD}\big),
\end{align*}
where the last bound follows from the proof of Theorem~\ref{T1.6}. Since $\varphi_n$ vanishes outside $\cD$,
combining the above estimates we obtain
\begin{equation}\label{ET3.3D}
\sup_{\Rd} \abs{\varphi_n-\varphi_m}\leq \kappa_4\, \big( (\norm{f_n}_{p,\cD}+\norm{\varphi_n}_\infty)
\norm{V_n-V_m}_{\infty, \cD} + \norm{f_n-f_m}_{p,\cD} \big)
\end{equation}
for some constant $\kappa_4$ dependent on $\Psi, \cD, p$. This in particular, implies \eqref{ET3.3C}.

Hence we can find a subsequence $\seq \varphi$, denoted in the same way, such that $\norm{\varphi_n-\varphi}_\infty\to 0$
as $n\to\infty$ and for some $\varphi\in\cC_0(\cD)$. It is also easy to see that we can pass to the limit in the first
term at the right hand side of \eqref{ET3.3C}, while to take limit in the rightmost term we may employ a similar argument
as above. Thus we obtain
\begin{equation*}
\varphi(x) = \ex^x\left[e^{-\int_0^{t\wedge\uptau_\cD} V(X_s)\,\D{s}} \varphi(X_{t\wedge\uptau_\cD})\right] +
\ex^x\left[\int_0^{t\wedge\uptau_\cD} e^{-\int_0^s V(X_{r}) \, \D{r}} f(X_s)\, \D{s}\right],
\end{equation*}
which gives \eqref{ET3.3A}.
\end{proof}

\begin{remark}[\textbf{Viscosity solutions}]
\label{R3.1}
The weak (semigroup) solution, as defined above, is related to the viscosity solution used for non-local equations.
 Let $H_0=\Psidel$.
Then for $f, V$  continuous,  our weak solution is actually a viscosity solution. This can be seen
as follows. Let $x\in\cD$ and $\sB_\delta(x)$ be the ball of radius $\delta$ around $x$ so that $\sB_\delta(x)
\subset\cD$. Also, denote by $\uptau_\delta$ the first exit time from $\sB_\delta(x)$. Then using the strong
Markov property of subordinate Brownian motion, it follows that for $t\geq 0$
\begin{equation}\label{ER1.1B}
\varphi(x) = \ex^x\left[e^{-\int_0^{t\wedge\uptau_\delta} V(X_s)\,\D{s}} \varphi(X_{t\wedge\uptau_\delta})\right]
+ \ex^x\left[\int_0^{t\wedge\uptau_\delta} e^{-\int_0^s V(X_{r}) \, \D{r}} f(X_s)\, \D{s}\right].
\end{equation}
Let $\psi \geq \varphi$ be a bounded continuous test function such that $\varphi-\psi$ has a global maximum $0$ at 
$x$ and $\psi\in \cC^2(\sB_{2\delta})$ for some $r$. We may modify $\psi$ to $\varphi$ outside $\sB_{2\delta}$. Thus 
by It\^o's formula it is readily seen that
\begin{align*}
\ex^x\left[\int_0^{t\wedge\uptau_\delta} e^{-\int_0^s V(X_{r})\, \D{r}} H^{\cD, V}\psi(X_s) \D{s}\right]
&= \psi(x) - \ex^x\left[e^{-\int_0^{t\wedge\uptau_\delta} V(X_s)\,\D{s}} \psi(X_{t\wedge\uptau_\delta})\right]
\\
&\leq \varphi(x) - \ex^x\left[e^{-\int_0^{t\wedge\uptau_\delta} V(X_s)\,\D{s}} \varphi(X_{t\wedge\uptau_\delta})\right]
\\
&= \ex^x\left[\int_0^{t\wedge\uptau_\delta} e^{-\int_0^s V(X_{r}) \, \D{r}} f(X_s)\, \D{s}\right],
\end{align*}
where the last line follows from \eqref{ER1.1B}. Finally, divide both sides by $t$ and let $t\to 0$, to conclude that
$$
H^{\cD, V}\psi(x) \leq f(x).
$$
Hence $\varphi$ is a viscosity sub-solution at $x$. Similarly, we can verify the viscosity super-solution property;
see, for instance, \cite{CS-09}.
For $f\in L^p$, our notion of weak solution can be related to the $L^p$-viscosity solution in \cite{CCKS}.
\end{remark}

\begin{remark}\label{R3.2}
The technique of Theorem \ref{T1.6} is not restricted to subordinate Brownian motion and can be used for a more
general class of Markov processes. For instance, we may consider the stable-like operator
$$
\mathcal{A}_\alpha \varphi(x)  =\mathrm{PV} \int_{\Rd}(\varphi(x)-\varphi(x+y)) \frac{C(x, y)}{\abs{y}^{d+\alpha}},
\quad 0<\alpha<2.
$$
Here $C(x, y)=C(x, -y)$ is assumed to be bounded from above and below. If $C$ is assumed to be H\"{o}lder continuous
in its first argument, then by \cite{CZ16} it is known that there exists a heat kernel associated with the above
generator, which gives rise to a strong Feller process and the corresponding transition density $p_C(t,x,y)$ has a
bound similar to the right hand side of \eqref{EL2.1A}, with ${\underline\mu}=\alpha/2$. A bound like \eqref{AB100} can be
obtained from the Chapman-Kolmogorov equality
$$
p_C(t+s, x, y) = \int_{\Rd}p_C(t,x,z)p_C(s, z, y) \, \D{z}\,.
$$
It can be seen from the above that $\sup_{t\geq 1} p_C(t, x, y)\leq \sup_{t\in[\frac{1}{2}, 1]} p_C(t, x, y)$.
On the other hand, an estimate similar to \eqref{EL3.1A} also holds (see \cite[Lem.~2.1]{ABC16}). For a
uniqueness-related discussion of the solutions we refer to \cite{MP14, CZ17}. Thus it is possible to obtain an
ABP-type estimate for Markov processes associated with the above generator. Due to a similar reason, the methodology
of Theorem~\ref{T1.6} is also applicable for diffusion processes.
\end{remark}

From the proof of Theorem~\ref{T3.3} (see \eqref{ET3.3D}) we also obtain the following stability result.
\begin{theorem}[\textbf{Continuous dependence}]
Let Assumptions~\ref{WLSC}(i) and ~\ref{As3.1} hold, and $\Psi\in\cB_0$ be strictly increasing. Suppose that $V_1, V_2
\geq 0$ are continuous on $\bar\cD$, and $f_1,f_2\in L^p(\cD)$ for some $p>\frac{d}{2{\underline\mu}}$. Denote by $\varphi_1,
\varphi_2 \in\cC(\Rd)$ the weak solutions of \eqref{diri} corresponding to the given coefficients, respectively.
Then there exists a constant $C_1> 0$, dependent on $\cD, \Psi, p$, such that
\begin{equation*}
\norm{\varphi_1-\varphi_2}_{\infty}\leq C_1\Bigl( (\norm{\varphi_1}_\infty+\norm{\varphi_2}_\infty +
\norm{f_1}_p+\norm{f_2}_p)\,\norm{V_1-V_2}_{\infty, \cD} + \norm{f_1-f_2}_p\Bigr).
\end{equation*}
\end{theorem}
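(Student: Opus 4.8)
The plan is to repeat, essentially verbatim, the chain of estimates leading to \eqref{ET3.3D} in the proof of Theorem~\ref{T3.3}, but now with the two genuine solutions $\varphi_1,\varphi_2$ in place of the two mollified approximants used there. By hypothesis each $\varphi_i\in\cC(\Rd)$ is the weak solution of \eqref{diri} with data $(V_i,f_i)$, hence satisfies the representation \eqref{ET3.3A} with $(V,f)$ replaced by $(V_i,f_i)$, for every $t\ge0$ and every $x\in\cD$; I fix one time $t=t_0$. Using Lemma~\ref{L3.1} (finiteness of $\sup_{x\in\cD}\ex^x[\uptau_\cD]$) together with Markov's inequality, I choose $t_0$ so that $\sup_{x\in\cD}\Prob^x(\uptau_\cD>t_0)\le\tfrac12$.

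I then subtract the two representations at $t=t_0$ and estimate the two resulting differences separately. For the ``hitting'' term I write the integrand difference as
\[
\bigl(e^{-\int_0^{t_0\wedge\uptau_\cD}V_1(X_s)\D s}-e^{-\int_0^{t_0\wedge\uptau_\cD}V_2(X_s)\D s}\bigr)\varphi_1(X_{t_0\wedge\uptau_\cD})+e^{-\int_0^{t_0\wedge\uptau_\cD}V_2(X_s)\D s}\bigl(\varphi_1-\varphi_2\bigr)(X_{t_0\wedge\uptau_\cD}),
\]
bounding the first summand by $\norm{\varphi_1}_\infty\,t_0\norm{V_1-V_2}_{\infty,\cD}$ via $|e^a-e^b|\le|a-b|$ for $a,b\le0$ and $|V_1-V_2|\le\norm{V_1-V_2}_{\infty,\cD}$ on $\cD$, and the second by $\norm{\varphi_1-\varphi_2}_\infty\,\Prob^x(\uptau_\cD>t_0)\le\tfrac12\norm{\varphi_1-\varphi_2}_\infty$; the point that makes this absorption work is that $\varphi_1-\varphi_2$ vanishes on $\cD^c$ (both solutions are $0$ there), so $(\varphi_1-\varphi_2)(X_{t_0\wedge\uptau_\cD})=0$ on $\{\uptau_\cD\le t_0\}$ and $\norm{\varphi_1-\varphi_2}_\infty=\sup_\cD|\varphi_1-\varphi_2|$. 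For the source term I split $e^{-\int_0^s V_1(X_r)\D r}f_1(X_s)-e^{-\int_0^s V_2(X_r)\D r}f_2(X_s)=(e^{-\int_0^s V_1(X_r)\D r}-e^{-\int_0^s V_2(X_r)\D r})f_1(X_s)+e^{-\int_0^s V_2(X_r)\D r}(f_1(X_s)-f_2(X_s))$ and bound by
\[
t_0\norm{V_1-V_2}_{\infty,\cD}\,\ex^x\!\Bigl[\int_0^{\uptau_\cD}|f_1(X_s)|\D s\Bigr]+\ex^x\!\Bigl[\int_0^{\uptau_\cD}|f_1(X_s)-f_2(X_s)|\D s\Bigr];
\]
by the estimate established inside the proof of Theorem~\ref{T1.6} (the density bound of Lemma~\ref{L2.1} and the moment bound of Lemma~\ref{L3.1}), for any $g\in L^p(\cD)$ with $p>\frac{d}{2{\underline\mu}}$ one has $\sup_{x\in\cD}\ex^x[\int_0^{\uptau_\cD}|g(X_s)|\D s]\le\kappa\norm{g}_{p,\cD}$ with $\kappa=\kappa(\Psi,\cD,p)$, so the source difference is at most $\kappa\bigl(t_0\norm{V_1-V_2}_{\infty,\cD}\norm{f_1}_{p,\cD}+\norm{f_1-f_2}_{p,\cD}\bigr)$.

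Collecting the pieces gives, for every $x\in\cD$,
\[
|\varphi_1(x)-\varphi_2(x)|\le\tfrac12\norm{\varphi_1-\varphi_2}_\infty+\bigl(t_0\norm{\varphi_1}_\infty+\kappa t_0\norm{f_1}_{p,\cD}\bigr)\norm{V_1-V_2}_{\infty,\cD}+\kappa\norm{f_1-f_2}_{p,\cD},
\]
and since the left-hand side vanishes off $\cD$ I take the supremum over $\Rd$ and absorb $\tfrac12\norm{\varphi_1-\varphi_2}_\infty$ into the left side; this is precisely \eqref{ET3.3D} for the pair $(\varphi_1,V_1,f_1)$, $(\varphi_2,V_2,f_2)$, and the symmetric form stated in the theorem follows a fortiori (or by repeating the two decompositions with $\varphi_2,f_2$ retained). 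I do not expect a genuine obstacle: the only load-bearing inputs are the exit-time moment bounds of Lemma~\ref{L3.1} --- which both furnish the choice of $t_0$ and carry the $\Psi([\diam\cD]^{-2})$-dependence hidden in $\kappa$ --- and the uniform-in-$x$ bound $\sup_{x\in\cD}\ex^x[\int_0^{\uptau_\cD}|g(X_s)|\D s]\le\kappa\norm{g}_{p,\cD}$ from the proof of Theorem~\ref{T1.6}; everything else is the elementary inequality $|e^a-e^b|\le|a-b|$ and the bookkeeping above.
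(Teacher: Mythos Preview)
Your proposal is correct and follows essentially the same approach as the paper: the paper simply points to \eqref{ET3.3D} in the proof of Theorem~\ref{T3.3}, and your argument reproduces exactly that derivation with $(\varphi_1,V_1,f_1)$ and $(\varphi_2,V_2,f_2)$ in place of the mollified approximants, including the choice of $t_0$ via Lemma~\ref{L3.1}, the $|e^a-e^b|\le|a-b|$ splitting, and the $L^p$ bound from the proof of Theorem~\ref{T1.6}. Your remark about symmetrizing to recover the stated form is also appropriate.
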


\bigskip

\section{\textbf{Maximum principles for non-local Schr\"odinger operators}}
\subsection{Stochastic representation of the principal eigenfunction}
The main result of this section is a stochastic representation for the principal eigenfunction (Theorem~\ref{T1.1}
below) of the non-local Schr\"{o}\-din\-ger operator with Dirichlet exterior condition, which solves
\begin{equation*}
\label{dirisch*}
\left\{\begin{array}{ll}
H^{\cD, V}\varphi = \lambda \varphi, \quad \text{in}\, \; \cD \\
\varphi = 0,\, \qquad\quad \;\;\;\, \text{in}\; \cD^c.
\end{array} \right.
\end{equation*}
We will use the following assumption in this section.
\begin{assumption}\label{As4.1}
There exists a family of decreasing bounded domains $\{\cD_n\}_{n\geq 1}$ such that $\cD_{n+1}\Subset\cD_{n}$ and
$\cap_{n\geq 1}\cD_n=\cD$. Moreover, all $\cD_n$ and $\cD$ have regular boundary in the sense of Assumption~\ref{As3.1}.
\end{assumption}
\noindent
In view of \cite[Lem.~2.9]{BGR15} we note that any domain $\cD$ which is convex or has a $\cC^2$ boundary, satisfies 
Assumption~\ref{As4.1}.

Recall the semigroup \eqref{semi}.
The results in the lemma below have been obtained in \cite[Lem.~3.1]{BL}, see a proof there.
\begin{lemma}\label{L1.1}
Let $\Psi \in \cB_0$ and $V\in L^\infty(\Rd)$. Also, let $\Psi$ satisfy the Hartman-Wintner condition \eqref{HW}.
The following properties hold.
\begin{itemize}
\item[(i)]
Every $T^{\cD, V}_t$, $t > 0$, is an integral operator with symmetric kernel $T^{\cD, V}(t,x,y)$, that is,
$$
T^{\cD, V}_t f(x) = \int_{\cD} T^{\cD, V}(t, x, y) f(y) \, \D{y}, \quad x \in \Rd\,,
$$
where
\begin{equation}
\label{EL1.2A}
T^{\cD, V}(t, x, y)= \ex^0_{\Prob_S} \left[p_{S^\Psi_t}(x-y) \ex^{x,y}_{0,S^\Psi_t}\left[e^{-\int_0^t V(B_{S^\Psi_s})ds}
\Ind_{\{\uptau_D > t\}}\right]\right],
\end{equation}
$p_t(x) = (4\pi t)^{-d/2}e^{-\frac{|x|^2}{4t}}$, and $\ex^{x,y}_{0,S^\Psi_t}$ denotes expectation with respect to the
Brownian bridge measure from $x$ at time 0 to $y$ at time $s$, evaluated at random time $s=S^\Psi_t$.
\item[(ii)]
$\{T^{\cD, V}_t: t\geq 0\}$ is a strongly continuous semigroup on $L^2(\cD)$, with generator $-H^{\cD, V}=-\Psidel - V$.
\item[(iii)]
 $T^{\cD, V}_t$ is a Hilbert-Schmidt
operator on $L^2(\cD)$, for every $t>0$.
\end{itemize}
\end{lemma}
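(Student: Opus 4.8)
The plan is to deduce all three assertions from the Feynman--Kac representation \eqref{semi} together with the subordination structure $X_t=B_{S^\Psi_t}$, with $\pro B$ and $\pro{S^\Psi}$ independent, and from the transition-density bounds recorded in Section~2.

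\emph{Part (i).} Starting from \eqref{semi}, I would condition on the subordinator path first: since $\pro B$ and $\pro{S^\Psi}$ are independent, conditionally on $\{S^\Psi_r:r\le t\}$ the map $r\mapsto X_r=B_{S^\Psi_r}$ is a deterministically time-changed Brownian motion, so
\[
T^{\cD,V}_t f(x)=\ex^0_{\Prob_S}\Bigl[\ex^x_{\Prob_W}\bigl[e^{-\int_0^t V(B_{S^\Psi_s})\,\D s}\,f(B_{S^\Psi_t})\,\Ind_{\{\uptau_\cD>t\}}\bigr]\Bigr].
\]
In the inner Wiener expectation, at the frozen value $s=S^\Psi_t$, I would disintegrate Brownian motion over its terminal position, $\ex^x_{\Prob_W}[\,\cdot\,]=\int_{\Rd}p_{s}(x-y)\,\ex^{x,y}_{0,s}[\,\cdot\,]\,\D y$, where $p_u(z)=(4\pi u)^{-d/2}e^{-|z|^2/4u}$ is the Gaussian density for the normalization $\ex_{\Prob_W}[B_t^2]=2t$ and $\ex^{x,y}_{0,s}$ the Brownian bridge expectation. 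Since $\Ind_{\{\uptau_\cD>t\}}$ forces $X_t=B_{S^\Psi_t}\in\cD$, the $y$-integral runs over $\cD$, and Fubini gives \eqref{EL1.2A}. For the symmetry $T^{\cD,V}(t,x,y)=T^{\cD,V}(t,y,x)$ I would work inside the $\Prob_S$-expectation at frozen $s=S^\Psi_t$ and use three facts: $p_s(x-y)=p_s(y-x)$; the time-reversal invariance of the Brownian bridge, i.e.\ $(\omega_r)_{0\le r\le s}$ under $\Prob^{x,y}_{0,s}$ has the same law as $(\omega_{s-r})_{0\le r\le s}$ under $\Prob^{y,x}_{0,s}$; and the fact that, after the corresponding reversal of the subordinator increments on $[0,t]$ (a law-preserving operation, since $\pro{S^\Psi}$ has stationary independent increments), both the event $\{\uptau_\cD>t\}$ and the integral $\int_0^t V(B_{S^\Psi_s})\,\D s$ are left unchanged.

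\emph{Parts (ii) and (iii).} The semigroup identity $T^{\cD,V}_{t+r}=T^{\cD,V}_t T^{\cD,V}_r$ follows from the strong Markov property of $\pro X$ at time $t$ on $\{t<\uptau_\cD\}$ combined with the multiplicative cocycle property of $s\mapsto e^{-\int_0^s V(X_r)\,\D r}$; self-adjointness on $L^2(\cD)$ is then immediate from the kernel symmetry in (i). For strong continuity I would use the pointwise domination $\abs{T^{\cD,V}_t f(x)}\le e^{t\norm{V}_\infty}\,\ex^x[\abs{f(X_t)}]$ together with the strong continuity of the free semigroup $e^{-tH_0}$ on $L^2(\Rd)$ (which holds because $H_0=\Psidel$ is self-adjoint), the Dirichlet restriction to $\cD$, and dominated convergence as $t\downarrow0$. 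The generator is then identified as the self-adjoint operator $-H^{\cD,V}=-\Psidel-V$ by the standard Feynman--Kac/bounded-perturbation theory for self-adjoint semigroups, or alternatively by differentiating \eqref{semi} at $t=0$ on the core $C_{\mathrm{c}}^\infty(\cD)$ via Dynkin's formula. For (iii), since $\abs{e^{-\int_0^t V(B_{S^\Psi_s})\,\D s}}\le e^{t\norm{V}_\infty}$ and the indicator is at most $1$, \eqref{EL1.2A} yields
\[
0\le T^{\cD,V}(t,x,y)\le e^{t\norm{V}_\infty}\,\ex^0_{\Prob_S}\bigl[p_{S^\Psi_t}(x-y)\bigr]=e^{t\norm{V}_\infty}\,q_t(x-y),
\]
and under the Hartman--Wintner condition \eqref{HW} the density $q_t$ is bounded, so
\[
\int_{\cD}\int_{\cD}T^{\cD,V}(t,x,y)^2\,\D x\,\D y\;\le\;e^{2t\norm{V}_\infty}\Bigl(\sup_{z}q_t(z)\Bigr)^2\abs{\cD}^2\;<\;\infty
\]
because $\cD$ is bounded; this is exactly the Hilbert--Schmidt property.

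\emph{Main obstacle.} The delicate point I expect is Part (i): making the double disintegration (first over the subordinator, then the Brownian bridge at the \emph{random} Brownian time $S^\Psi_t$) rigorous, and in particular tracking how the killing event $\{\uptau_\cD>t\}$ transforms --- here $\uptau_\cD$ is the exit time of the subordinate path $s\mapsto B_{S^\Psi_s}$, not of the underlying Brownian motion --- both under passage to the bridge and under the simultaneous time reversal of the bridge and of the subordinator increments used for symmetry. Everything else is routine once (i) and the kernel bound above are in hand.
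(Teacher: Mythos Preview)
The paper does not actually prove this lemma: immediately before the statement it says ``The results in the lemma below have been obtained in \cite[Lem.~3.1]{BL}, see a proof there,'' and no argument is given in the present paper. So there is nothing to compare your proposal against here beyond noting that the authors defer entirely to an external reference.

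That said, your sketch is the standard route and is essentially correct. The subordinator-first conditioning followed by the Brownian-bridge disintegration at the random time $S^\Psi_t$ is exactly how one derives \eqref{EL1.2A}; the symmetry argument via simultaneous time reversal of the bridge and of the subordinator increments on $[0,t]$ is the right mechanism (and your own ``Main obstacle'' paragraph correctly flags that the only care needed is checking that the killing event $\{\uptau_\cD>t\}$, which concerns the \emph{subordinate} path, is preserved under this joint reversal --- it is, since the reversed composite path is just $r\mapsto X_{t-r}$). The Hilbert--Schmidt bound via $T^{\cD,V}(t,x,y)\le e^{t\norm{V}_\infty}q_t(x-y)$ and boundedness of $q_t$ under \eqref{HW} on the bounded domain $\cD$ is precisely the argument one finds in the cited reference. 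Your treatment of (ii) is also standard; the only cosmetic point is that identifying the generator on all of its domain (not just the core) is cleanest via the bounded-perturbation/Feynman--Kac route you mention first, rather than Dynkin on $C^\infty_{\mathrm c}(\cD)$ alone.
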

\noindent
As well-known, see a discussion in \cite{BL}, the principal eigenfunction $\varphi^*$ of $H^{\cD, V}$ is strictly positive
and the corresponding eigenvalue $\lambda^*$ is simple. We note that the assumption of $V$ being bounded is not necessary,
and our conclusions can be extended to Kato-class as in the same reference. Moreover, under Assumption~\ref{As3.1} it follows
that $\varphi^* \in \cC_0(\cD)$. For a given family $\{\cD_n\}_{n\geq 1}$, as required in Assumption~\ref{As4.1}, we denote
the associated principal eigenpair by $(\lambda^*_n, \varphi^*_n)$.

\begin{lemma}\label{L1.2}
Assume that $\Psi \in \cB_0$ satisfies the Hartman-Wintner property \eqref{HW}. The following hold.
\begin{itemize}
\item[(i)]
For every $n\in\NN$ we have $\lambda^*>\lambda^*_n$. Moreover, $\lim_{n\to \infty}\lambda^*_n =\lambda^*$.
\item[(ii)]
Let $\widetilde{V}\geq V$ and suppose that for an open set $\sU\subset\cD$ we have $\widetilde{V}> V$ in $\sU$. Then
$ \lambda^*_{\widetilde{V}}> \lambda^*_V$, where $\lambda^*_V$ and $\lambda^*_{\widetilde V}$ denote the principal
eigenvalues corresponding to the potentials $V$ and $\widetilde{V}$, respectively.
\end{itemize}
\end{lemma}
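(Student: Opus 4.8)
The plan is to read both claims off the monotonicity of the killed Feynman--Kac semigroups $T^{\cD,V}_t$ from \eqref{semi}, together with the Krein--Rutman picture recalled after Lemma~\ref{L1.1}: each of the operators $T^{\cD,V}_t$, $T^{\cD_n,V}_t$, $T^{\cD,\widetilde V}_t$ is self-adjoint, Hilbert--Schmidt by Lemma~\ref{L1.1}(iii), and positivity-improving (since $\cD$ is connected), so that $e^{-\lambda^* t}$ is its simple top eigenvalue, equal to its operator norm, with the strictly positive eigenfunction $\varphi^*$; likewise write $(\lambda^*_n,\varphi^*_n)$, $(\lambda^*_V,\varphi^*_V)$, $(\lambda^*_{\widetilde V},\varphi^*_{\widetilde V})$. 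I will also use the variational description: for a bounded domain $\cG$, $\lambda^*(\cG)=\inf\{\langle H_0\phi,\phi\rangle+\int_\cG V\phi^2\}$, the infimum taken over $\phi$ in the form domain of $H_0$ with $\phi=0$ a.e.\ on $\cG^c$ and $\norm{\phi}_2=1$.

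For the monotonicity and strictness in part~(i): Assumption~\ref{As4.1} gives $\cD\Subset\cD_{n+1}\Subset\cD_n$, hence $\uptau_\cD\le\uptau_{\cD_{n+1}}\le\uptau_{\cD_n}$, and from \eqref{semi} it follows that $0\le T^{\cD,V}_t g\le T^{\cD_{n+1},V}_t g\le T^{\cD_n,V}_t g$ for every $g\ge 0$; monotonicity of the spectral radius on the positive cone then yields $\lambda^*\ge\lambda^*_{n+1}\ge\lambda^*_n$, so $(\lambda^*_n)$ is non-decreasing and $\lambda^*_n\le\lambda^*$. If $\lambda^*=\lambda^*_n$ for some $n$, then, extending $\varphi^*$ by zero, $\varphi^*\ge0$ lies in $L^2(\cD_n)$ and $\langle T^{\cD_n,V}_t\varphi^*,\varphi^*\rangle\ge\langle T^{\cD,V}_t\varphi^*,\varphi^*\rangle=e^{-\lambda^* t}\norm{\varphi^*}_2^2=\norm{T^{\cD_n,V}_t}\,\norm{\varphi^*}_2^2$, so equality forces $\varphi^*$ to be a top eigenfunction of $T^{\cD_n,V}_t$, i.e.\ $\varphi^*=c\varphi^*_n$ with $c>0$ by simplicity; this is impossible since $\varphi^*_n>0$ on the non-empty open set $\cD_n\setminus\bar\cD$ while $\varphi^*\equiv0$ there. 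Hence $\lambda^*_n<\lambda^*$ and $\lambda^*_n\uparrow\lambda_\infty$ for some $\lambda_\infty\le\lambda^*$.

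The remaining point, $\lambda_\infty=\lambda^*$, is the one requiring a genuine argument, because one must rule out loss of spectral mass as $\cD_n\downarrow\cD$. I would argue variationally: pick $\phi_n$ with $\phi_n=0$ a.e.\ on $\cD_n^c$, $\norm{\phi_n}_2=1$ and $\langle H_0\phi_n,\phi_n\rangle+\int V\phi_n^2\le\lambda^*_n+\tfrac1n$; then $\langle H_0\phi_n,\phi_n\rangle\le\lambda_\infty+\tfrac1n+\norm{V}_\infty$ is bounded, so $(\phi_n)$ is bounded in the form domain of $H_0$ and supported in the fixed bounded set $\bar\cD_1$. By the compact embedding of this form domain (over $\bar\cD_1$) into $L^2(\cD_1)$ --- valid since $\Psi$ is unbounded, equivalently since $T^{\cD_1,V}_t$ is compact --- a subsequence converges strongly in $L^2$ and weakly in the form domain to some $\phi$ with $\norm{\phi}_2=1$, and $\phi=0$ a.e.\ on $\cD^c$ because every $x\notin\cD$ has a neighbourhood disjoint from $\cD_m$ (hence from $\supp\phi_n$) for all large $m$, as $\cap_m\cD_m=\cD$. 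Weak lower semicontinuity of $\langle H_0\cdot,\cdot\rangle$ and $\int V\phi_n^2\to\int V\phi^2$ give $\lambda^*\le\langle H_0\phi,\phi\rangle+\int V\phi^2\le\liminf_n(\lambda^*_n+\tfrac1n)=\lambda_\infty$, whence $\lambda_\infty=\lambda^*$. (A probabilistic variant: normalize $\norm{\varphi^*_n}_2=1$; the bound $0\le T^{\cD_n,V}(t,x,y)\le T^{\cD_1,V}(t,x,y)$ with Lemma~\ref{L2.1} gives $\sup_n\norm{\varphi^*_n}_\infty<\infty$, and since $\supp\varphi^*_n\Subset\cD_1$, interior smoothing for $\Psidel$ gives equicontinuity of $\{\varphi^*_n\}$; a uniformly convergent subsequence has a nonzero limit $\psi\ge0$ vanishing on $\cD^c$, and because $\uptau_{\cD_n}\downarrow\uptau_\cD$ $\Prob^x$-a.s.\ --- as $\cap_n\cD_n=\cD$ and paths are right-continuous --- passing to the limit in $\varphi^*_n=e^{\lambda^*_n t}T^{\cD_n,V}_t\varphi^*_n$ yields $T^{\cD,V}_t\psi=e^{-\lambda_\infty t}\psi$, forcing $\lambda_\infty=\lambda^*$ since the only nonnegative eigenfunction of $T^{\cD,V}_t$ is the principal one.)

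For part~(ii): $\widetilde V\ge V$ gives $e^{-\int_0^t\widetilde V(X_s)\D s}\le e^{-\int_0^t V(X_s)\D s}$, hence $0\le T^{\cD,\widetilde V}_t g\le T^{\cD,V}_t g$ for $g\ge0$, and monotonicity of the spectral radius gives $\lambda^*_{\widetilde V}\ge\lambda^*_V$. For the strict inequality I use the variational formula with the competitor $\varphi^*_{\widetilde V}$ (normalized, $>0$ on $\cD$): since $\widetilde V-V\ge0$ on $\cD$ and $\widetilde V-V>0$ on the non-empty open set $\sU$,
\[
\lambda^*_{\widetilde V}=\langle H_0\varphi^*_{\widetilde V},\varphi^*_{\widetilde V}\rangle+\int_{\cD}\widetilde V\,(\varphi^*_{\widetilde V})^2
\ \ge\ \lambda^*_V+\int_{\sU}(\widetilde V-V)\,(\varphi^*_{\widetilde V})^2\ >\ \lambda^*_V ,
\]
the last integral being strictly positive because $\sU$ has positive measure and $\varphi^*_{\widetilde V}>0$ on it. (Probabilistically: for $x\in\cD$ the expected occupation time $\ex^x[\int_0^{\uptau_\cD}\Ind_{\sU}(X_s)\,\D s]>0$ by strict positivity of the killed transition kernel on $\cD\times\cD$, so $T^{\cD,\widetilde V}_t\varphi^*_V(x)<e^{-\lambda^*_V t}\varphi^*_V(x)$ for all $x\in\cD$, and pairing against $\varphi^*_{\widetilde V}>0$ gives $e^{-\lambda^*_{\widetilde V}t}<e^{-\lambda^*_V t}$.) The main obstacle is the convergence $\lambda^*_n\to\lambda^*$: everything else is soft (monotonicity of semigroups and simplicity of the principal eigenvalue), while this step needs the quantitative input of Lemma~\ref{L2.1} or, equivalently, the compact embedding of the form domain, to prevent the extremal function from escaping as $\cD_n$ shrinks to $\cD$.
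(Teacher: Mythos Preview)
Your proof is correct, and for the monotonicity and strictness parts (both halves of (i) before the limit, and all of (ii)) it is essentially the paper's argument: kernel/semigroup domination plus simplicity of the principal eigenvalue. For (ii) the paper compares the kernels $T^{\cD,V}(t,x,y)>T^{\cD,\widetilde V}(t,x,y)$ for $y\in\sU$ and then uses the Rayleigh quotient with $\varphi^*_{\widetilde V}$ exactly as you do.

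The genuine difference is in the convergence $\lambda^*_n\to\lambda^*$. Your primary route is variational: near-minimizers $\phi_n$ of the quadratic form, uniform form-domain bound, compact embedding into $L^2(\cD_1)$, weak lower semicontinuity, and the support argument $\phi=0$ a.e.\ on $\cD^c$. The paper instead works at the semigroup level: it normalizes $\norm{\varphi^*_n}_2=1$, passes to a weak $L^2$ limit $\varphi_0$, proves the \emph{kernel} convergence $T^{\cD_n,V}(t,x,\cdot)\to T^{\cD,V}(t,x,\cdot)$ in $L^2(\cD_1)$ for each fixed $x\in\cD$ (here it uses Assumption~\ref{As4.1} to get $\uptau_{\cD_n}\searrow\uptau_\cD$ a.s.\ and the Hartman--Wintner bound on $q_t$ to dominate), deduces pointwise and then $L^2$ convergence of $\varphi^*_n$, and passes to the limit in the eigenvalue relation. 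Your parenthetical ``probabilistic variant'' is close to this but not identical: you invoke interior smoothing and equicontinuity, whereas the paper avoids any regularity theory for $\Psidel$ by bootstrapping through the integral kernel. Your variational argument is cleaner functional analysis and needs only the compactness of $T^{\cD_1,V}_t$ (Lemma~\ref{L1.1}(iii)), not the explicit density bound of Lemma~\ref{L2.1} (which in any case requires WLSC, not assumed here). The paper's route, on the other hand, stays closer to the stochastic representation and yields the extra information that the eigenfunctions themselves converge, which is tacitly used later (e.g.\ in the proofs of Theorems~\ref{T1.1} and~\ref{T1.2}).
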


\begin{proof}
(i)\; First we prove domain monotonicity of the principal eigenvalue. Note that
$$
T_t^{\cD, V} \varphi^* = e^{-\lambda^* t} \varphi^*, \quad t\geq 0.
$$
The same relation holds for $\cD_n, \lambda^*_n, \varphi^*_n$. Due to self-adjointness of $T_t^{\cD, V}$, we have
$$
e^{-\lambda^* t}=\max \left\{\langle T_t^{\cD, V} \xi, \xi\rangle\; \colon\;  \norm{\xi}_2=1\right\}.
$$
Since $L^2(\cD)\subset L^2(\cD_n)$ (on extension by $0$ to the larger domain), it is obvious that $\lambda^*_n\leq
\lambda^*$. Suppose that $\lambda^*_n= \lambda^*$.
Using the expression \eqref{EL1.2A} from Lemma~\ref{L1.1}(i), we see that $T^{\cD_n, V}(t, x, y)\geq
T^{\cD, V}(t, x, y)$, for all $x, y \in \cD$. We normalize by $\norm{\varphi^*}_2 =1$ and extend $\varphi^*$ by
$0$ outside $\cD$. Note that if
$$
T^{\cD_n, V}_t \varphi^*>e^{-\lambda^* t}\varphi^* \quad \text{in a set of positive Lebesgue measure in}\; \cD,
$$
then $\langle T^{\cD_n, V}_t \varphi^*, \varphi^*\rangle > e^{-\lambda^* t}$, leading to a contradiction for
$\lambda^*_n= \lambda^*$. On the other hand,
$$
T^{\cD_n, V}_t \varphi^*=e^{-\lambda^* t}\varphi^* \quad \text{in}\; \cD
$$
implies
$$
\langle T^{\cD_n, V}_t \varphi^*, \varphi^*\rangle=e^{-\lambda^* t},
$$
and hence $\varphi^*$ is an eigenfunction corresponding to the eigenvalue $e^{-\lambda^* t}$. By uniqueness of the
principal eigenfunction it follows that $\varphi^*$ can only be a positive multiple of $\varphi^*_n$. However, this
is not possible as $\varphi^*_n$ is strictly positive in $\cD_n$. Thus $\lambda^*_n = \lambda^*$ is not possible
and hence $\lambda^*_n < \lambda^*$ holds.

Now we show that $\lim_{n\to \infty}\lambda^*_n =\lambda^*$. Suppose that $\lim_{n\to \infty}
\lambda^*_n =\lambda_0$. We extend $\varphi^*_n$ by $0$ in $\cD_1\setminus\cD_{n}$. Normalizing by $\norm{\varphi^*_n}_2
=1$, we may also assume that
$$
\varphi^*_n\rightharpoonup \varphi_0 \quad \text{in}\; L^2(\cD_1) \quad \text{as} \quad n\to \infty,
$$
for some $\varphi_0\in L^2(\cD_1)$. It is also easily seen that $\varphi_0$ is supported on $\bar\cD$. We show that
for every $x\in \cD$
\begin{equation}\label{EL1.2B}
T^{\cD_{n}, V}(t, x, \cdot)\to T^{\cD, V}(t, x, \cdot) \quad \text{in}\; L^2(\cD_1).
\end{equation}
Since $\uptau_{\cD_n}=\uptau_{\bar{\cD_n}}$ by Assumption~\ref{As4.1}, we obtain from \eqref{EL1.2A} that
$T^{\cD_{n}, V}(t, x, y)=0$ for every $y\in \cD^c_{n}$. Again, $\uptau_{\cD}=\uptau_{\bar\cD}$ $\Prob$-a.s. implies
that
$$
\uptau_{\cD_n}\searrow \uptau_\cD \quad \text{$\Prob$-a.s., \, as $n\to \infty$}.
$$
Also, note that for $y\in\cD$ we have $\{\uptau_\cD=t, X_t=y\}=\emptyset$. Thus for every fixed $y\in\cD$,
\begin{align*}
\ex^0_{\Prob_S}\left[p_{S^\Psi_t} (x-y) \ex^{x, y}_{0, S^\Psi_t} \left[e^{-\int_0^t V(B_s)\, \D{s}}
\Ind_{\{\uptau_{\cD_{n}}>t\}}\right]\right]
\to \ex^0_{\Prob_S}\left[p_{S^\Psi_t} (x-y) \ex^{x, y}_{0, S^\Psi_t} \left[e^{-\int_0^t V(B_s)\, \D{s}}
\Ind_{\{\uptau_\cD> t\}}\right]\right].
\end{align*}
For $t>0$ and $V$ is bounded, $T^{\cD_{n}, V}(t, x, y)$ is uniformly bounded by $\ex^0_{\Prob_S}[p_{S^\Psi_t}(x-y)]
e^{t\norm{V}_\infty}=q_t(x-y)e^{t\norm{V}_\infty}$, where $q$ is the transition probability density of subordinate
Brownian motion, which itself is bounded due to the Hartman-Wintner condition. Thus by dominated convergence (using
$\abs{\partial\cD}=0$), we obtain \eqref{EL1.2B} as $\cD$ is bounded. This implies
$$
\int_{\cD_1} T^{\cD_{n}, V}(t, x, y)\varphi^*_n(y)\, \D{y}\to \int_{\cD_1} T^{\cD, V}(t, x, y)\varphi_0(y)\, \D{y},
\quad x\in\cD.
$$
Since for every $x\in\cD$ we have
$$
\int_{\cD_1} T^{\cD, V}(t, x, y)\varphi_0(y)\,
\D{y}=\int_{\cD} T^{\cD, V}(t, x, y)\varphi_0(y)\, \D{y},
$$
we get $\varphi^*_n\to\varphi_0$ pointwise for $x\in\cD$. It is again direct to see that $\norm{\varphi^*_n}_\infty$ is
uniformly bounded above in $n$. Thus by the dominated convergence theorem we obtain $\norm{\varphi^*_n-\varphi_0}_{2, \cD}
\to 0$. This, in particular, implies $\varphi_0\gneq 0$. Hence we can take the limit in $T^{\cD, V}_t \varphi^*_n(x) =
e^{-\lambda^*_n t}\varphi^*_n$ to obtain
$$
T^{\cD, V}_t \varphi_0(x) = e^{-\lambda_0 t}\varphi_0(x),
$$
which also gives $\varphi_0 > 0$ in $\cD$. By uniqueness of a positive eigenfunction it then follows that $\lambda_0=\lambda^*$.
This completes the proof of part (i).

Next we prove (ii). Clearly, $\lambda^*_V \leq \lambda^*_{\widetilde{V}}$. Let $\varphi^*_{V}$ and $\varphi^*_{\widetilde V}$
denote the principal eigenfunctions corresponding to $\lambda^*_V$ and $\lambda^*_{\widetilde{V}}$, respectively. From
\eqref{EL1.2A} it is seen that for every $y\in \sU$
$$
T^{\cD, V}(t, x, y)> T^{\cD, \widetilde{V}}(t, x, y).
$$
This follows from the fact that the subordinator $\pro{S^\Psi}$ jumps at time $t$ with probability zero. Therefore,
on normalizing in $L^2$, we obtain
\begin{align*}
e^{-\lambda^*_{\widetilde{V}} t} & = \langle T^{\cD, \widetilde{V}}_t \varphi^*_{\widetilde{V}}, \varphi^*_{\widetilde{V}}\rangle
= \int_{\cD}\int_{\cD} T^{\cD, \widetilde{V}}(t, x, y)\varphi^*_{\widetilde{V}}(x) \varphi^*_{\widetilde{V}}(y)\, \D{y}\, \D{x} \\
& < \langle T^{\cD, {V}}_t \varphi^*_{\widetilde{V}}, \varphi^*_{\widetilde{V}}\rangle
\leq \max\left\{ \langle T^{\cD, V}_t \xi, \xi \rangle\; :\; \norm{\xi}_2=1\right\}
= e^{-\lambda^*_V t}.
\end{align*}
This gives $\lambda^*_{\widetilde V}> \lambda^*_V$.
\end{proof}

As an immediate consequence of Lemma ~\ref{L1.1} we have the following result.
\begin{corollary}
Let $\Psi \in \cB_0$ satisfy the Hartman-Wintner property \eqref{HW}, and let Assumption~\ref{As4.1} hold. Then
$$
\lambda^*
= -\lim_{t\to\infty} \frac{1}{t}\, \log\, \ex^x\left[e^{-\int_0^t V(X_s)\, \D{s}}\Ind_{\{\uptau_\cD> t\}}\right].
$$
\end{corollary}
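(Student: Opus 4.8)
The plan is to identify the expectation inside the limit with $T^{\cD,V}_t\mathbf{1}(x)$, where $\mathbf{1} = \Ind_\cD$ is the function that is identically $1$ on $\cD$ and $0$ outside: on the event $\{\uptau_\cD>t\}$ one has $X_t\in\cD$, so by \eqref{semi},
\[
\ex^x\!\left[e^{-\int_0^t V(X_s)\,\D{s}}\Ind_{\{\uptau_\cD> t\}}\right] = T^{\cD,V}_t\mathbf{1}(x), \qquad x\in\cD.
\]
It then suffices to establish a two-sided bound $c_1(x)\,e^{-\lambda^* t}\le T^{\cD,V}_t\mathbf{1}(x)\le c_2\,e^{-\lambda^* t}$ for all $t\ge 1$, with $c_2<\infty$ independent of $x$ and $0<c_1(x)<\infty$ for each $x\in\cD$; taking logarithms, dividing by $t$, and letting $t\to\infty$ yields $-\lambda^* = \lim_{t\to\infty}\frac1t\log T^{\cD,V}_t\mathbf{1}(x)$, which is the assertion. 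The two facts used are: (a) by Lemma~\ref{L1.1} and the simplicity/positivity of the principal eigenpair recalled after it, $\semi{T^{\cD,V}}$ (equivalently $\{e^{-tH^{\cD,V}}:t\ge 0\}$) is a positivity-preserving self-adjoint semigroup on $L^2(\cD)$ with $\norm{T^{\cD,V}_t}_{L^2\to L^2} = e^{-\lambda^* t}$, since $\lambda^* = \min\Spec H^{\cD,V}$, and $T^{\cD,V}_t\varphi^* = e^{-\lambda^* t}\varphi^*$; (b) for each $x$ the kernel $T^{\cD,V}(1,x,\cdot)$ lies in $L^2(\cD)$ with $L^2$-norm bounded uniformly in $x$, which follows from \eqref{EL1.2A}, the elementary bound $T^{\cD,V}(1,x,y)\le e^{\norm{V}_\infty}q_1(x-y)$, and the boundedness of $q_1$ furnished by the Hartman-Wintner condition \eqref{HW}.

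For the upper bound I would use the semigroup property and Lemma~\ref{L1.1}(i) to write, for $t\ge 1$,
\[
T^{\cD,V}_t\mathbf{1}(x) = \bigl\langle T^{\cD,V}(1,x,\cdot),\, T^{\cD,V}_{t-1}\mathbf{1}\bigr\rangle_{L^2(\cD)} \le \bnorm{T^{\cD,V}(1,x,\cdot)}_{2}\,\bnorm{T^{\cD,V}_{t-1}}_{L^2\to L^2}\,\norm{\mathbf{1}}_{2},
\]
and then invoke (b) together with $\norm{\mathbf{1}}_{L^2(\cD)}^2 = \abs{\cD}<\infty$ and $\norm{T^{\cD,V}_{t-1}}_{L^2\to L^2} = e^{-\lambda^*(t-1)}$; this gives $T^{\cD,V}_t\mathbf{1}(x)\le c_2\, e^{-\lambda^* t}$ with $c_2$ independent of $x$, hence $\limsup_{t\to\infty}\frac1t\log T^{\cD,V}_t\mathbf{1}(x)\le -\lambda^*$. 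For the lower bound, under Assumption~\ref{As3.1} one has $\varphi^*\in\cC_0(\cD)$, so $\varphi^*$ is bounded and $\mathbf{1}\ge \norm{\varphi^*}_\infty^{-1}\varphi^*$ on $\cD$; since $T^{\cD,V}_t$ is positivity-preserving and $T^{\cD,V}_t\varphi^* = e^{-\lambda^* t}\varphi^*$,
\[
T^{\cD,V}_t\mathbf{1}(x) \ge \norm{\varphi^*}_\infty^{-1}\,T^{\cD,V}_t\varphi^*(x) = \norm{\varphi^*}_\infty^{-1}\,\varphi^*(x)\,e^{-\lambda^* t},
\]
and $\varphi^*(x)>0$ for $x\in\cD$ makes this $c_1(x)\,e^{-\lambda^* t}$ with $c_1(x)>0$, so $\liminf_{t\to\infty}\frac1t\log T^{\cD,V}_t\mathbf{1}(x)\ge -\lambda^*$. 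Combining the two estimates proves the corollary.

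The argument is largely routine; the only point requiring a little care is the uniform-in-$x$ $L^2$-control of the kernel in the upper bound — one must not evaluate the semigroup at a point directly, but rather spend one unit of time to land in $L^2(\cD)$, which is exactly where Lemma~\ref{L1.1}(i) and the Hartman-Wintner bound on $q_1$ enter. An alternative is to expand $T^{\cD,V}(t,x,y)=\sum_n e^{-\lambda^*_n t}\varphi^*_n(x)\varphi^*_n(y)$ using the Hilbert-Schmidt property in Lemma~\ref{L1.1}(iii) and isolate the $n=1$ term, but this requires separately justifying pointwise convergence of the series and is no simpler.
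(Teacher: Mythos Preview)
Your proof is correct. The lower bound on $T^{\cD,V}_t\mathbf{1}(x)$ (yielding $\liminf \ge -\lambda^*$) is essentially identical to the paper's argument: both use $\varphi^* \le \norm{\varphi^*}_\infty$ inside the expectation, just phrased slightly differently.

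The upper bound, however, is obtained by a genuinely different route. The paper does not use the $L^2$ operator norm at all; instead it invokes the approximating domains $\cD_n \supset \cD$ from Assumption~\ref{As4.1} and Lemma~\ref{L1.2}(i). On $\cD_n$ the principal eigenfunction $\varphi^*_n$ is continuous and strictly positive on the compact set $\bar\cD$, so $\min_\cD \varphi^*_n > 0$, and one can estimate
\[
e^{-\lambda^*_n t}\varphi^*_n(x) \;\ge\; \min_\cD \varphi^*_n \cdot \ex^x\!\left[e^{-\int_0^t V(X_s)\,\D s}\Ind_{\{\uptau_\cD>t\}}\right],
\]
giving $\lambda^*_n \le -\limsup$, and then let $n\to\infty$ using $\lambda^*_n \to \lambda^*$. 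Your argument instead spends one unit of time to pass from pointwise evaluation to $L^2$, and then uses $\norm{T^{\cD,V}_{t-1}}_{L^2\to L^2} = e^{-\lambda^*(t-1)}$ directly. This is cleaner and in fact shows that Assumption~\ref{As4.1} is not really needed for the corollary itself: the Hartman-Wintner condition (to bound the kernel at time $1$) and self-adjointness suffice. The paper's approach, by contrast, keeps everything at the level of pointwise Feynman--Kac estimates and reuses the domain-approximation machinery that is needed anyway for Theorem~\ref{T1.1}, so it is more in keeping with the probabilistic development of the section.
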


\begin{proof}
Since $\varphi^*\in\cC_0(\cD)$, we have
$$
e^{-\lambda^* t} \varphi^*(x) = \ex^x\left[e^{-\int_0^t V(X_s)\, \D{s}} \varphi^*(X_t)\Ind_{\{\uptau_\cD> t\}}\right]
\leq \norm{\varphi^*}_\infty \ex^x\left[e^{-\int_0^t V(X_s)\, \D{s}}\Ind_{\{\uptau_\cD> t\}}\right].
$$
By taking logarithms on both sides and dividing by $t$, we obtain
\begin{equation}\label{EC4.1A}
\lambda^*\geq -\liminf_{t\to\infty} \frac{1}{t}\, \log\, \ex^x\left[e^{-\int_0^t V(X_s)\, \D{s}}\Ind_{\{\uptau_\cD> t\}}\right].
\end{equation}
On the other hand, from Lemma~\ref{L1.1} we have $\lambda^*_n<\lambda^*$ and $\varphi^*_n>0$ in $\cD_n$. Thus
\begin{align*}
e^{-\lambda^*_nt} \varphi^*(x) &= \ex^x\left[e^{-\int_0^t V(X_s)\, \D{s}} \varphi^*_n(X_t)\Ind_{\{\uptau_{\cD_n}> t\}}\right]
\\
& \geq  \ex^x\left[e^{-\int_0^t V(X_s)\, \D{s}} \varphi^*_n(X_t)\Ind_{\{\uptau_{\cD}> t\}}\right] \\
& \geq \min_{\cD}\varphi^*_n\, \ex^x\left[e^{-\int_0^t V(X_s)\, \D{s}} \Ind_{\{\uptau_{\cD}> t\}}\right].
\end{align*}
From here we find
$$
\lambda^*_n \leq
-\limsup_{t\to\infty} \frac{1}{t}\, \log\, \ex^x\left[e^{-\int_0^t V(X_s)\, \D{s}}\Ind_{\{\uptau_\cD> t\}}\right].
$$
Letting $n\to\infty$ and using Lemma ~\ref{L1.2}(i), we furthermore obtain
\begin{equation}\label{EC4.1B}
\lambda^*
\leq -\limsup_{t\to\infty} \frac{1}{t}\, \log\, \ex^x\left[e^{-\int_0^t V(X_s)\, \D{s}}\Ind_{\{\uptau_\cD> t\}}\right].
\end{equation}
Combining \eqref{EC4.1A} and \eqref{EC4.1B} gives the result.
\end{proof}

Our next main result is a stochastic representation of the principal eigenfunction. As above, we denote the principal
eigenpair corresponding to the Schr\"{o}dinger operator $H^{\cD, V}$ by $(\varphi^*, \lambda^*)$.

\begin{theorem}[\textbf{Stochastic representation of principal eigenfunction}]
\label{T1.1}
Let the conditions in Lemma~\ref{L1.1} hold together with Assumption~\ref{As4.1}. Consider any point $\hat{x}\in\cD$ and
denote by $\sB_r$ the ball of radius $r$ around it. Then we have
\begin{equation}\label{ET1.1A}
\varphi^*(x) =
\ex^x\left[e^{\int_0^{\Breve\uptau_r} (\lambda^*-V(X_s))\, \D{s}}
\varphi^*(X_{\Breve\uptau_r})\Ind_{\{\Breve\uptau_r<\uptau_\cD\}}\right], \quad x\in\cD\setminus \bar\sB_r,
\end{equation}
where $\Breve\uptau_r$ denotes the first hitting time of the ball $\sB_r$ by $\pro X$. In particular, we have for
$x\in\cD$ that
\begin{equation}\label{ET1.1B}
\varphi^*(x) = \varphi^*(\hat{x})\, \lim_{r\to 0} \ex^x\left[e^{\int_0^{\Breve\uptau_r} (\lambda^*-V(X_s))\, \D{s}}
\Ind_{\{\Breve\uptau_r<\uptau_\cD\}}\right]\,.
\end{equation}
\end{theorem}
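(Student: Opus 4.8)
The plan is to obtain \eqref{ET1.1A} by optional stopping of a martingale built from the Feynman--Kac identity $T^{\cD, V}_t\varphi^* = e^{-\lambda^* t}\varphi^*$, $t\ge 0$, and then to read off \eqref{ET1.1B} from it using continuity of $\varphi^*$. Recall that $\varphi^*$ is the strictly positive principal eigenfunction, extended by $0$ to $\cD^c$, so $\varphi^*\in\cC_0(\cD)$ and $0\le\varphi^*\le\norm{\varphi^*}_\infty$. First I would introduce
$$
M_t \df e^{\int_0^t(\lambda^*-V(X_s))\,\D s}\,\varphi^*(X_t)\,\Ind_{\{t<\uptau_\cD\}}, \qquad t\ge 0,
$$
and check that $\pro M$ is a non-negative cadlag $\pro\cF$-martingale under $\Prob^x$ for each $x\in\cD$: integrability and the value $\ex^x[M_t]=e^{\lambda^* t}T^{\cD, V}_t\varphi^*(x)=\varphi^*(x)$ are immediate from the semigroup relation, the identity $\ex^x[M_t\mid\cF_s]=M_s$ for $s<t$ follows from the Markov property of $\pro X$ together with the computation $\ex^y\big[e^{\int_0^{t-s}(\lambda^*-V(X_u))\,\D u}\varphi^*(X_{t-s})\Ind_{\{t-s<\uptau_\cD\}}\big]=e^{\lambda^*(t-s)}T^{\cD, V}_{t-s}\varphi^*(y)=\varphi^*(y)$ valid for $y\in\cD$, and $M_{\uptau_\cD}=0$ because of the indicator.

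Next I would fix $\hat x\in\cD$ and $r>0$ small enough that $\bar\sB_r\subset\cD$, set $\cD'\df\cD\setminus\bar\sB_r$, and take $\sigma\df\Breve\uptau_r\wedge\uptau_\cD$, which coincides $\Prob^x$-a.s.\ with the first exit time of $\pro X$ from $\cD'$. Optional stopping at the bounded stopping time $\sigma\wedge t$ gives $\ex^x[M_{\sigma\wedge t}]=\varphi^*(x)$ for all $t>0$ and $x\in\cD'$; splitting $M_{\sigma\wedge t}=M_\sigma\Ind_{\{\sigma\le t\}}+M_t\Ind_{\{\sigma>t\}}$ and noting that on $\{t<\sigma\}$ the path has stayed in $\cD'$ up to time $t$, one obtains
$$
\varphi^*(x)=\ex^x\big[M_\sigma\Ind_{\{\sigma\le t\}}\big]+e^{\lambda^* t}\,T^{\cD', V}_t\varphi^*(x), \qquad t>0 .
$$
Since $\cD'$ is a genuine subdomain of $\cD$ (it misses the ball $\sB_r$) with regular boundary, the domain-monotonicity argument of Lemma~\ref{L1.2}(i) — applied with $T^{\cD, V}(t,x,y)>T^{\cD', V}(t,x,y)$ for $x,y\in\cD'$, which holds because the subordinate Brownian bridge can make an excursion into $\sB_r$ and return while staying in $\cD$ — shows that $\lambda'\df\lambda^*(\cD', V)>\lambda^*$. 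Combining $\norm{T^{\cD', V}_s}_{L^2(\cD')\to L^2(\cD')}\le e^{-\lambda' s}$ with the kernel bound $T^{\cD', V}(1,x,y)\le e^{\norm{V}_\infty}\norm{q_1}_\infty$ coming from Lemma~\ref{L1.1}(i) and the Hartman--Wintner condition, one gets $0\le e^{\lambda^* t}T^{\cD', V}_t\varphi^*(x)\le C\,e^{-(\lambda'-\lambda^*)t}\norm{\varphi^*}_2\to 0$ as $t\to\infty$. Hence $\ex^x[M_\sigma\Ind_{\{\sigma\le t\}}]\to\varphi^*(x)$, and since $M_\sigma\ge 0$ monotone convergence yields $\ex^x[M_\sigma]=\varphi^*(x)$. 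Finally, as $\sigma=\Breve\uptau_r$ on $\{\Breve\uptau_r<\uptau_\cD\}$ while $M_\sigma=M_{\uptau_\cD}=0$ on the complement, $M_\sigma=e^{\int_0^{\Breve\uptau_r}(\lambda^*-V(X_s))\,\D s}\varphi^*(X_{\Breve\uptau_r})\Ind_{\{\Breve\uptau_r<\uptau_\cD\}}$, which is precisely \eqref{ET1.1A}.

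For \eqref{ET1.1B} with $x\in\cD$, $x\ne\hat x$, I would take $r$ so small that $x\notin\bar\sB_r$ and put $\eta_r\df\ex^x\big[e^{\int_0^{\Breve\uptau_r}(\lambda^*-V(X_s))\,\D s}\Ind_{\{\Breve\uptau_r<\uptau_\cD\}}\big]$. On $\{\Breve\uptau_r<\uptau_\cD\}$ one has $\abs{X_{\Breve\uptau_r}-\hat x}\le r$, so \eqref{ET1.1A} gives both $\abs{\varphi^*(x)-\varphi^*(\hat x)\eta_r}\le\omega(r)\,\eta_r$, with $\omega$ the modulus of continuity of $\varphi^*$ near $\hat x$, and — using $\varphi^*(X_{\Breve\uptau_r})\ge\varphi^*(\hat x)-\omega(r)\ge\tfrac12\varphi^*(\hat x)$ for small $r$ — the uniform bound $\eta_r\le 2\varphi^*(x)/\varphi^*(\hat x)$; hence $\eta_r$ stays bounded and $\varphi^*(\hat x)\eta_r\to\varphi^*(x)$ as $r\to 0$, which is \eqref{ET1.1B}. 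The case $x=\hat x$ is immediate, since then $\Breve\uptau_r=0$ and $\eta_r=1$.

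The step I expect to be the main obstacle is the passage $t\to\infty$ in the optional-stopping identity: a crude exponential bound on the weight $e^{\int_0^t(\lambda^*-V(X_s))\,\D s}$ does not suffice, because the associated Feynman--Kac ``lifetime'' over $\cD$ sits exactly at the critical value $\lambda^*(\cD, V)=\lambda^*$; one genuinely needs the strict domain monotonicity $\lambda^*(\cD\setminus\bar\sB_r, V)>\lambda^*(\cD, V)$ of the principal eigenvalue for the punctured domain in order to force the remainder term $e^{\lambda^* t}T^{\cD', V}_t\varphi^*(x)$ to vanish.
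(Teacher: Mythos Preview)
Your proof is correct and shares the same core idea with the paper's: the crux is that the ``remainder'' term on $\{\Breve\uptau_r>t\}$ must be shown to vanish as $t\to\infty$, and both arguments achieve this by exploiting that removing the ball $\sB_r$ strictly raises the principal eigenvalue above $\lambda^*$. The implementations differ, however. The paper does not work with $\cD'=\cD\setminus\bar\sB_r$ directly; instead it raises the potential on $\sB_r$ (invoking Lemma~\ref{L1.2}(ii) to get $\lambda^*_{\widetilde V}>\lambda^*$), then uses the outer approximating family $\{\cD_n\}$ from Assumption~\ref{As4.1} (via Lemma~\ref{L1.2}(i)) to obtain an eigenpair $(\widetilde\lambda^*_n,\widetilde\varphi^*_n)$ with $\widetilde\lambda^*_n>\lambda^*$ and $\widetilde\varphi^*_n$ bounded below on all of $\bar\cD$; this allows a \emph{pointwise} comparison $\varphi^*(X_t)\Ind_{\{t<\uptau_\cD\}}\le C\,\widetilde\varphi^*_n(X_t)\Ind_{\{t<\uptau_{\cD_n}\}}$ on $\{t\le\Breve\uptau_r\}$, reducing to the eigen-identity on $\cD_n$. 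You instead establish $\lambda'=\lambda^*(\cD',V)>\lambda^*$ by adapting the argument of Lemma~\ref{L1.2}(i) to the inclusion $\cD'\subset\cD$, and then kill the remainder via the $L^2$ operator-norm bound $\|T^{\cD',V}_s\|\le e^{-\lambda's}$ combined with the $L^2\to L^\infty$ kernel estimate. Your route is somewhat more direct and, interestingly, does not actually use the family $\{\cD_n\}$ of Assumption~\ref{As4.1}; it relies only on regularity of $\partial\sB_r$ (which follows from the exterior cone condition). The paper's route has the virtue of staying within the exact statements of Lemma~\ref{L1.2} as already proved, at the cost of the detour through potential perturbation and domain enlargement. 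Your derivation of \eqref{ET1.1B} from \eqref{ET1.1A} via the modulus of continuity and the uniform bound on $\eta_r$ matches the paper's one-line appeal to continuity of $\varphi^*$.
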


\begin{proof}
Fix $r$ small enough so that $\sB_r\Subset \cD$ holds. Recall that $\varphi^*$ is strictly positive  in $\cD$ and
continuous on $\bar\cD$. From the strong Markov property of $\pro X$ it is immediate that
\begin{equation}\label{ET1.1C}
\varphi^*(x) = \ex^x\left[e^{\int_0^{t\wedge \Breve\uptau_r} (\lambda^*-V(X_s))\, \D{s}}
\varphi^*(X_{t\wedge \Breve\uptau_r})\Ind_{\{t\wedge \Breve\uptau_r<\uptau_\cD\}}\right]
\end{equation}
Thus by letting $t\to\infty$ and applying Fatou's lemma in \eqref{ET1.1C}, we get
\begin{equation}\label{ET1.1D}
\varphi^*(x) \geq  \ex^x\left[e^{\int_0^{\Breve\uptau_r} (\lambda^*-V(X_s))\, \D{s}}
\varphi^*(X_{\Breve\uptau_r})\Ind_{\{\Breve\uptau_r<\uptau_\cD\}}\right].
\end{equation}
Note that \eqref{ET1.1D} also implies
$$
\limsup_{r\to 0}\, \ex^x\left[e^{\int_0^{\Breve\uptau_r} (\lambda^*-V(X_s))\, \D{s}}
\Ind_{\{\Breve\uptau_r<\uptau_\cD\}}\right]\leq \frac{\varphi^*(x)}{\varphi^*(\hat x)}.
$$
Define
\[
\widetilde{V}=\left\{\begin{array}{ll}
V & \text{for}\; x\in \sB_r \cap \cD,
\\
\norm{V}_\infty + 1 & \text{for}\; x\in \sB_r.
\end{array}
\right.
\]
Then by Lemma~\ref{L1.2}(ii) we have $\lambda^*_{\widetilde{V}} > \lambda^*$. Using the domain continuity property
from Lemma~\ref{L1.2}(i), we can find $n$ large enough such that $\widetilde\lambda^*_n>\lambda^*$, where $(\widetilde
\lambda^*_n, \widetilde\varphi^*_n)$ is the principal eigenpair in $\cD_n$ with potential $\widetilde{V}$. Also, note
that $\widetilde \varphi^*_n$ is strictly positive in $\cD_n$. Therefore, since $V=\widetilde V$ on $\sB_r^c$, we get
\begin{align*}
& \ex^x\left[e^{\int_0^{t} (\lambda^*-V(X_s))\, \D{s}} \varphi^*(X_{t})\Ind_{\{t<\uptau_\cD\}}
\Ind_{\{t\leq \Breve\uptau_r\}}\right]
\\
& =
e^{(\lambda^*-\widetilde\lambda^*_n) t}\ex^x\left[e^{\int_0^{t} (\widetilde\lambda^*_n-\widetilde{V}(X_s))\, \D{s}}
\varphi^*(X_{t})\Ind_{\{t<\uptau_\cD\}} \Ind_{\{t\leq \Breve\uptau_r\}}\right]
\\
&\leq
e^{(\lambda^*-\widetilde\lambda^*_n) t} \frac{\max_{\cD} \varphi^*}{\min_{\cD}\varphi^*_n}
\ex^x\left[e^{\int_0^{t} (\widetilde\lambda^*_n-\widetilde{V}(X_s))\, \D{s}} \widetilde\varphi^*_n(X_{t})
\Ind_{\{t<\uptau_\cD\}} \Ind_{\{t\leq \Breve\uptau_r\}}\right]
\\
&\leq
e^{(\lambda^*-\widetilde\lambda^*_n) t} \frac{\max_{\cD} \varphi^*}{\min_{\cD}\widetilde\varphi^*_n}
\ex^x\left[e^{\int_0^{t} (\widetilde\lambda^*_n-\widetilde{V}(X_s))\, \D{s}} \widetilde\varphi^*_n(X_{t})
\Ind_{\{t<\uptau_{\cD_n}\}}
\Ind_{\{t\leq \Breve\uptau_r\}}\right]
\\
&\leq e^{(\lambda^*-\widetilde\lambda^*_n) t} \frac{\max_{\cD} \varphi^*}{\min_{\cD}\widetilde\varphi^*_n}
\widetilde\varphi^*_n (x),
\end{align*}
where in the last line we used \eqref{ET1.1C} for the eigenpair $(\widetilde\lambda^*_n, \widetilde\varphi^*_n)$.
Hence by letting $t\to \infty$ in the above expression, we see that
$$
\ex^x\left[e^{\int_0^{t} (\lambda^*-V(X_s))\, \D{s}} \varphi^*(X_{t})\Ind_{\{t<\uptau_\cD\}}
\Ind_{\{t\leq \Breve\uptau_r\}}\right]\to 0.
$$
Next using \eqref{ET1.1D} and the monotone convergence theorem in \eqref{ET1.1C}, we find that
$$
\varphi^*(x) = \ex^x\left[e^{\int_0^{\Breve\uptau_r} (\lambda^*-V(X_s))\, \D{s}}
\varphi^*(X_{\Breve\uptau_r})\Ind_{\{\Breve\uptau_r<\uptau_\cD\}}\right],
$$
which proves \eqref{ET1.1A}. Equality \eqref{ET1.1B} follows by \eqref{ET1.1A} and the continuity of $\varphi^*$.
\end{proof}

\subsection{Maximum principles}

A first consequence of Theorem~\ref{T1.1} is a refined maximum principle in the sense of the classic result
\cite[Prop.~6.2]{BNV}. Recall from Definition~\ref{D3.1} that a continuous function $w$ is a weak super-solution of
\begin{equation}\label{E4.9}
H^{\cD, V} w\geq 0 \quad \text{in}\; \cD, \quad \mbox{and} \quad w=0\,\; \text{in}\; \cD^c,
\end{equation}
if
$$
T^{\cD, V}_t w(x)\leq w(x), \quad x\in\cD, \; t>0,
$$
holds. Also, the function $w$ is said to be a weak sub-solution of \eqref{E4.9} if $-w$ is a weak super-solution,
and $w$ is a weak solution if it is both a weak sub- and super-solution. Note that $0$ is always a weak solution to
the above problem. Recall the notation
$$
h \gneq 0 \quad \mbox{meaning} \quad h(x) \geq 0 \; \mbox{for all $x \in \cD$ and $h \not\equiv 0$.}
$$
\begin{theorem}[\textbf{Refined maximum principle}]
\label{T1.2}
Suppose that $\Psi \in \cB_0$ satisfies the Hartman-Wintner property \eqref{HW} and Assumption~\ref{As4.1} holds.
Let $w_1$ be a weak-supersolution and $w_2$ be a weak-subsolution of \eqref{E4.9}. Furthermore, assume that
$\lambda^*>0$. Then we have either $w_1= w_2$ or $w_1>w_2$ in $\cD$.
\end{theorem}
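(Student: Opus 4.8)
The plan is to reduce the statement to a single function and then to exploit that a positive generalized principal eigenvalue $\lambda^*$ makes the killed Feynman--Kac semigroup $\semi{T^{\cD, V}}$ exponentially transient. Set $w \df w_1 - w_2$. Since $w_1$ is a weak super-solution and $w_2$ a weak sub-solution of \eqref{E4.9}, both lie in $\cC(\Rd)$, vanish on $\cD^c$, and are bounded (as $\cD$ is bounded); by linearity of $T^{\cD, V}_t$,
$$
T^{\cD, V}_t w \;=\; T^{\cD, V}_t w_1 - T^{\cD, V}_t w_2 \;\leq\; w_1 - w_2 \;=\; w \qquad \text{in } \cD, \ t > 0 .
$$
Thus it suffices to show that a bounded $w \in \cC(\Rd)$ with $w = 0$ on $\cD^c$ and $T^{\cD, V}_t w \leq w$ in $\cD$ satisfies, when $\lambda^* > 0$, the alternative $w \equiv 0$ or $w > 0$ in $\cD$.

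First I would prove $w \geq 0$ in $\cD$. By hypothesis $w(x) \geq T^{\cD, V}_t w(x)$ for every $t > 0$ and $x \in \cD$, and I let $t \to \infty$. By the corollary preceding Theorem~\ref{T1.1},
$$
\lambda^* \;=\; -\lim_{t\to\infty}\frac{1}{t}\log \ex^x\!\left[e^{-\int_0^t V(X_s)\,\D s}\Ind_{\{\uptau_\cD > t\}}\right],
$$
so $\lambda^* > 0$ forces $\ex^x[e^{-\int_0^t V(X_s)\,\D s}\Ind_{\{\uptau_\cD > t\}}] \to 0$ as $t \to \infty$ for each fixed $x$. Hence, using $|w| \le \|w\|_\infty$ on $\cD$,
$$
\bigl|T^{\cD, V}_t w(x)\bigr| \;\leq\; \|w\|_\infty \, \ex^x\!\left[e^{-\int_0^t V(X_s)\,\D s}\Ind_{\{\uptau_\cD > t\}}\right] \;\xrightarrow[\;t\to\infty\;]{}\; 0 ,
$$
and passing to the limit in $w(x) \geq T^{\cD, V}_t w(x)$ gives $w(x) \geq 0$ for every $x \in \cD$, i.e. $w_1 \geq w_2$.

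To upgrade this to the dichotomy, suppose $w \not\equiv 0$, so $w \gneq 0$. By Lemma~\ref{L1.1}(i), for $x \in \cD$ and $t > 0$,
$$
w(x) \;\geq\; T^{\cD, V}_t w(x) \;=\; \int_\cD T^{\cD, V}(t, x, y)\, w(y)\, \D y .
$$
The kernel $T^{\cD, V}(t, x, y)$ is strictly positive for all $x, y \in \cD$ and $t > 0$: this follows from the representation \eqref{EL1.2A}, since $p_s(x-y) > 0$, the exponential weight is positive, and — $\cD$ being a connected domain — a Brownian bridge joining two points of $\cD$ stays inside $\cD$ with positive probability for a set of subordinator values $S^\Psi_t$ of positive probability; it is equally a consequence of the reachability contained in Theorem~\ref{T1.1}. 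Consequently $\int_\cD T^{\cD, V}(t, x, y)\, w(y)\, \D y > 0$, whence $w(x) > 0$ for every $x \in \cD$. Together with the trivial case $w \equiv 0$ this proves the theorem.

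The only genuinely delicate point I anticipate is the strict positivity (irreducibility) of $T^{\cD, V}(t, x, y)$ on $\cD$, which is where the fine structure of subordinate Brownian motion enters; everything else is elementary once the exponential decay furnished by $\lambda^* > 0$ is in hand. If one prefers to avoid invoking kernel positivity directly, the strong conclusion can instead be obtained by a Harnack-type localization on small balls $\sB_r \Subset \cD$, using that $\ex^x[e^{\int_0^{\Breve\uptau_r}(\lambda^*-V(X_s))\,\D s}\Ind_{\{\Breve\uptau_r < \uptau_\cD\}}] > 0$ for every $x \in \cD$, which is part of the content of Theorem~\ref{T1.1}.
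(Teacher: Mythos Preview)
Your proof is correct but follows a different route from the paper's. For the nonnegativity of $w$, the paper argues indirectly: it rules out $w\lneq 0$ via the variational inequality $1\leq\langle T^{\cD,V}_t w,w\rangle\leq e^{-\lambda^* t}$, and then shows that if $w(\hat x)>0$ at one point then $w>0$ everywhere by an optional-stopping argument at the hitting time $\breve\uptau_r$ of a small ball around $\hat x$, establishing the supermartingale property of $e^{-\int_0^t V(X_s)\,\D s}w(X_t)\Ind_{\{t<\uptau_\cD\}}$ and showing that the piece on $\{t\leq\breve\uptau_r\}$ vanishes as $t\to\infty$ by comparison with the eigenfunction $\varphi^*_n$ on a slightly larger domain. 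You instead get $w\geq 0$ in one stroke from the exponential decay $T^{\cD,V}_t w(x)\to 0$ supplied by the corollary on the log-asymptotics of the Feynman--Kac mass, and then upgrade to strict positivity by the irreducibility of the kernel $T^{\cD,V}(t,x,y)$ on the connected domain $\cD$. Your argument is shorter and more elementary; the paper's argument, on the other hand, sets up two tools---the supermartingale structure and the hitting-time representation $w(x)\geq\ex^x[e^{-\int_0^{\breve\uptau_r}V(X_s)\,\D s}w(X_{\breve\uptau_r})\Ind_{\{\breve\uptau_r<\uptau_\cD\}}]$---that are reused verbatim in Proposition~\ref{P1.1}, Theorem~\ref{T1.4}, and Theorem~\ref{narrow}, so in the context of the full paper the longer proof pays for itself. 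Your only soft spot is the justification of strict kernel positivity, which you flag yourself; the Brownian-bridge argument you sketch is standard for connected $\cD$ and suffices here.
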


\begin{proof}
Since $-w_2$ is a weak super-solution and the addition of two super-solutions is again a super-solution, it suffices
to show that if $w$ is a super-solution, then $w\geq 0$. First notice that $w\lneq 0$ is not possible, since otherwise
for $\norm{w}_{2, \cD}=1$ we would have
$$
1 \leq \langle T^{\cD, V}_t w, w\rangle \leq e^{-\lambda^* t}, \quad t>0,
$$
contradicting that $\lambda^*> 0$. Thus either $w=0$ or $w^+ > 0$ holds at some point in $\cD$. We show that if $w^+$
is positive at some point of $\cD$, then it is positive everywhere in $\cD$. Suppose that $w(\hat{x})>0$ for some
$\hat{x}\in \cD$. We show that
\begin{equation}\label{ET1.2A}
w(x) \geq \ex^x\left[e^{-\int_0^{\Breve\uptau_r} V(X_s)\, \D{s}}
w(X_{\Breve\uptau_r})\Ind_{\{\Breve\uptau_r<\uptau_\cD\}}\right], \quad x\in\cD\setminus \sB_r.
\end{equation}
This will imply that $w>0$ in $\cD$, proving the theorem. Thus it remains to show \eqref{ET1.2A}. Note that
$\left(e^{-\int_0^t V(X_s) \D{s}} w(X_t)\Ind_{\{t<\uptau_\cD\}}\right)_{t\geq 0}$ is a super-martingale
with respect to the natural filtration $\pro \cF$ of $\pro X$. Indeed, taking $s<t$ and using the Markov property
of $\pro X$, we see that
\begin{align*}
\ex\left[e^{-\int_0^t V(X_r) \D{r}} w(X_t)\Ind_{\{t<\uptau_\cD\}} \Big| \cF_s \right]
&=
\ex\left[e^{-\int_0^s V(X_r) \D{r}} e^{-\int_s^t V(X_r) \D{r}} w(X_t)\Ind_{\{s<\uptau_\cD \}}
\Ind_{\{t<\uptau_\cD\}} \Big| \cF_s \right]
\\
&=
e^{-\int_0^s V(X_r) \D{r}}\Ind_{\{s<\uptau_\cD \}} \ex^{X_s}\left[ e^{-\int_0^{t-s} V(X_r) \D{r}} w(X_{t-s})
\Ind_{\{t-s<\uptau_\cD\}}\right]
\\
&\leq e^{-\int_0^s V(X_r) \D{r}}\Ind_{\{s<\uptau_\cD \}} w(X_s),
\end{align*}
where the last inequality follows from the definition of super-solution. Thus by the optional sampling theorem we have
\begin{align}\label{ET1.2B}
w(x) & \geq \ex^x\left[e^{-\int_0^{t\wedge \Breve\uptau_r} V(X_s)\, \D{s}}
w(X_{t\wedge \Breve\uptau_r})\Ind_{\{t\wedge \Breve\uptau_r<\uptau_\cD\}}\right]\nonumber
\\
& = \ex^x\left[e^{-\int_0^{t} V(X_s)\, \D{s}} w(X_{t})\Ind_{\{t\leq \Breve\uptau_r \}}\Ind_{\{t<\uptau_\cD\}}\right]
+ \ex^x\left[e^{-\int_0^{\Breve\uptau_r} V(X_s)\, \D{s}} w(X_{\Breve\uptau_r})\Ind_{\{\Breve\uptau_r < t \}}
\Ind_{\{\Breve\uptau_r<\uptau_\cD\}}\right].
\end{align}
By Lemma~\ref{L1.1}(a) we can find $n$ large enough such that $\lambda^*_n>0$. Using the stochastic representation
\eqref{ET1.1C} of $\varphi^*_n$, we obtain
\begin{align*}
& \hspace{-1cm}
\ex^x\left[e^{-\int_0^{t} V(X_s)\, \D{s}} w(X_{t})\Ind_{\{t\leq \Breve\uptau_r \}}\Ind_{\{t<\uptau_\cD\}}\right]
\\
& \leq e^{-\lambda^*_n t} \frac{\max_\cD w}{\min_{\cD} \varphi^*_n} \,
\ex^x\left[e^{\int_0^{t} (\lambda^*_n-V(X_s))\, \D{s}} \varphi^*_n(X_{t})
\Ind_{\{t\leq \Breve\uptau_r \}}\Ind_{\{t<\uptau_\cD\}}\right]
\\
& \leq e^{-\lambda^*_n t} \frac{\max_\cD w}{\min_{\cD} \varphi^*_n} \,
\ex^x\left[e^{\int_0^{t} (\lambda^*_n-V(X_s))\, \D{s}} \varphi^*_n(X_{t})
\Ind_{\{t\leq \Breve\uptau_r \}}\Ind_{\{t<\uptau_{\cD_n}\}}\right]
\\
& \leq e^{-\lambda^*_n t} \frac{\max_\cD w}{\min_{\cD} \varphi^*_n}\varphi^*_n (x)\to 0 \quad \mbox{as $t\to\infty$,}
\end{align*}
where in the third line we used \eqref{ET1.1C}. Thus by letting $t\to\infty$ in \eqref{ET1.2B} and applying the
monotone convergence theorem, we obtain \eqref{ET1.2A}.
\end{proof}

A converse of Theorem~\ref{T1.2} also holds.
\begin{theorem}
\label{conv}
If for any super-solution $w\in\cC(\Rd)$ of
$$
H^{\cD, V} w\geq 0 \;\; \text{in}\; \; \cD, \quad \mbox{and} \quad w=0\, \;\;\text{in}\;\; \cD^c,
$$
we have $w\geq 0$, then $\lambda^*>0$.
\end{theorem}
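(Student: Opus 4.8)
The plan is to prove the contrapositive: assuming $\lambda^*\le 0$, I will exhibit a weak super-solution $w\in\cC(\Rd)$ of $H^{\cD,V}w\ge 0$ in $\cD$ with $w=0$ in $\cD^c$ that is \emph{not} nonnegative, which contradicts the hypothesis. The obvious candidate is $w=-\varphi^*$, the negative of the principal eigenfunction of $H^{\cD,V}$ associated with $\lambda^*$.

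First I would invoke the facts about $\varphi^*$ already recorded in the excerpt: under the standing assumptions (in particular Assumption~\ref{As3.1}, a fortiori Assumption~\ref{As4.1}) one has $\varphi^*>0$ in $\cD$ and $\varphi^*\in\cC_0(\cD)$, so its extension by zero is continuous on $\Rd$ and vanishes on $\cD^c$; and since $\semi{T^{\cD,V}}$ is the self-adjoint semigroup with generator $-H^{\cD,V}$ (Lemma~\ref{L1.1}(ii)) while $\varphi^*$ is its principal eigenfunction, $T^{\cD,V}_t\varphi^*=e^{-\lambda^* t}\varphi^*$ for every $t\ge 0$. Setting $w=-\varphi^*$, for $t>0$ and $x\in\cD$ I compute $T^{\cD,V}_t w(x)=-e^{-\lambda^* t}\varphi^*(x)$; since $\lambda^*\le 0$ forces $e^{-\lambda^* t}\ge 1$ and $\varphi^*(x)>0$, this gives $T^{\cD,V}_t w(x)\le -\varphi^*(x)=w(x)$. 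By Definition~\ref{D3.1} (and the reformulation preceding Theorem~\ref{T1.2}) this is precisely the statement that $w$ is a weak super-solution of \eqref{E4.9}. But $w=-\varphi^*<0$ throughout $\cD$, so the assumed conclusion ``every such super-solution is $\ge 0$'' fails; hence $\lambda^*\le 0$ is impossible and $\lambda^*>0$.

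I do not expect a genuine obstacle: once the eigenrelation $T^{\cD,V}_t\varphi^*=e^{-\lambda^* t}\varphi^*$ and the regularity $\varphi^*\in\cC_0(\cD)$ are in hand — both already available — the argument is a one-line computation. The only point deserving a moment of care is that $-\varphi^*$ genuinely belongs to the class of admissible test functions of Theorem~\ref{conv}, i.e.\ continuous on all of $\Rd$ and vanishing on $\cD^c$, which is exactly what the boundary-regularity assumption guarantees. If desired, one may additionally note that in the borderline case $\lambda^*=0$ the function $-\varphi^*$ is in fact a weak \emph{solution}, but this refinement is not needed for the contradiction.
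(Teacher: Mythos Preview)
Your proof is correct and is essentially identical to the paper's own argument: both prove the contrapositive by taking $w=-\varphi^*$, using the eigenrelation $T^{\cD,V}_t\varphi^*=e^{-\lambda^* t}\varphi^*$ together with $\lambda^*\le 0$ to verify the super-solution inequality $T^{\cD,V}_t w\le w$, and then noting that $w<0$ in $\cD$ contradicts the hypothesis.
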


\begin{proof}
Suppose, to the contrary, that $\lambda^*\leq 0$. Let $\varphi^*$ be the (strictly positive) principal eigenfunction.
Then we know that
$$
e^{\lambda^* t} T^{\cD, V}_t \varphi^*(x) =\varphi^*(x), \quad  t>0\,,
$$
and therefore
$$
T^{\cD, V}_t \varphi^*(x) \geq \varphi^*(x), \quad t>0\,,$$
implying
$$
T^{\cD, V}_t (-\varphi^*)(x) \leq (-\varphi^*)(x), \quad t>0\,.
$$
Thus $-\varphi^*$ is a weak super-solution. However, it is negative in $\cD$, which contradicts the assumption.
Hence $\lambda^*>0$.
\end{proof}

Next we derive a uniqueness result from the stochastic representation of $\varphi^*$.
\begin{proposition}\label{P1.1}
Let $\psi\in\cC(\Rd)$ be a positive weak super-solution of
$$
H^{\cD, V}\psi -\lambda \psi \geq 0, \quad \mbox{with} \quad \psi=0 \, \; \text{in}\; \cD^c
\quad \text{and}\quad \psi>0\, \;\text{in}\; \cD.
$$
If $\lambda\geq \lambda^*$, then $\psi= \kappa \varphi^*$ for some $\kappa>0$. In particular, $\lambda=\lambda^*$.
\end{proposition}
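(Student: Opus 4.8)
The plan is to first pin down $\lambda=\lambda^*$ by a spectral pairing, then run the argument of the proof of Theorem~\ref{T1.1} with the super-solution $\psi$ in the role of $\varphi^*$ to get a one-sided stochastic identity, and finally exploit a symmetry to conclude that $\psi/\varphi^*$ is constant. For the first step, unwinding Definition~\ref{D3.1} shows that the hypothesis on $\psi$ means $e^{\lambda t}T^{\cD,V}_t\psi(x)\leq\psi(x)$ for all $x\in\cD$, $t>0$ (note $\psi$ is bounded, and being continuous with $\psi=0$ in $\cD^c$ it also vanishes on $\partial\cD$). Pairing in $L^2(\cD)$ with $\varphi^*>0$ and using self-adjointness of $T^{\cD,V}_t$ together with $T^{\cD,V}_t\varphi^*=e^{-\lambda^* t}\varphi^*$ gives
\[
e^{(\lambda-\lambda^*)t}\,\langle\psi,\varphi^*\rangle=e^{\lambda t}\langle T^{\cD,V}_t\psi,\varphi^*\rangle\leq\langle\psi,\varphi^*\rangle ;
\]
since $\psi,\varphi^*>0$ in $\cD$ the pairing $\langle\psi,\varphi^*\rangle$ is positive, so $e^{(\lambda-\lambda^*)t}\leq1$ for all $t>0$, i.e.\ $\lambda\leq\lambda^*$, and with the hypothesis $\lambda\geq\lambda^*$ we get $\lambda=\lambda^*$.

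Next, fix $\hat x\in\cD$ and $r$ small enough that $\sB_r\Subset\cD$. Exactly as in the proof of Theorem~\ref{T1.2}, the super-solution property makes $\bigl(e^{\int_0^t(\lambda^*-V(X_s))\D s}\psi(X_t)\Ind_{\{t<\uptau_\cD\}}\bigr)_{t\geq0}$ a supermartingale for the natural filtration of $\pro X$, so optional sampling at $t\wedge\Breve\uptau_r$ gives, for $x\in\cD\setminus\bar\sB_r$,
\[
\psi(x)\geq\ex^x\!\left[e^{\int_0^{t}(\lambda^*-V(X_s))\D s}\psi(X_t)\Ind_{\{t\leq\Breve\uptau_r,\,t<\uptau_\cD\}}\right]+\ex^x\!\left[e^{\int_0^{\Breve\uptau_r}(\lambda^*-V(X_s))\D s}\psi(X_{\Breve\uptau_r})\Ind_{\{\Breve\uptau_r<t\wedge\uptau_\cD\}}\right].
\]
The first term is treated verbatim as in the proof of Theorem~\ref{T1.1}: enhance $V$ to $\widetilde V$ equal to $V$ on $\sB_r^c$ and to $\|V\|_\infty+1$ on $\sB_r$, so $\lambda^*_{\widetilde V}>\lambda^*$ by Lemma~\ref{L1.2}(ii); using the domain continuity in Lemma~\ref{L1.2}(i), pick $n$ with $\widetilde\lambda^*_n>\lambda^*$, where $(\widetilde\lambda^*_n,\widetilde\varphi^*_n)$ is the principal eigenpair of $H^{\cD_n,\widetilde V}$; since $V=\widetilde V$ on $\{s<\Breve\uptau_r\}$, rewrite the exponent through $\widetilde\lambda^*_n$, bound $\psi(X_t)\leq\frac{\max_{\bar\cD}\psi}{\min_{\bar\cD}\widetilde\varphi^*_n}\,\widetilde\varphi^*_n(X_t)$ on $\{t<\uptau_\cD\}$, enlarge $\uptau_\cD$ to $\uptau_{\cD_n}$, and use \eqref{ET1.1C} for $(\widetilde\lambda^*_n,\widetilde\varphi^*_n)$ to dominate the term by $e^{(\lambda^*-\widetilde\lambda^*_n)t}C\,\widetilde\varphi^*_n(x)\to0$ as $t\to\infty$. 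Letting $t\to\infty$ and applying monotone convergence in the second term yields
\[
\psi(x)\geq\ex^x\!\left[e^{\int_0^{\Breve\uptau_r}(\lambda^*-V(X_s))\D s}\psi(X_{\Breve\uptau_r})\Ind_{\{\Breve\uptau_r<\uptau_\cD\}}\right],\qquad x\in\cD\setminus\bar\sB_r .
\]

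Finally, let $r\to0$. On $\{\Breve\uptau_r<\uptau_\cD\}$ we have $X_{\Breve\uptau_r}\in\bar\sB_r$, hence $\psi(X_{\Breve\uptau_r})\geq\psi(\hat x)-\omega(r)$ with $\omega(r)\to0$ by continuity, while $\ex^x\bigl[e^{\int_0^{\Breve\uptau_r}(\lambda^*-V(X_s))\D s}\Ind_{\{\Breve\uptau_r<\uptau_\cD\}}\bigr]\to\varphi^*(x)/\varphi^*(\hat x)$ by \eqref{ET1.1B}. Thus $\psi(x)\geq\psi(\hat x)\,\varphi^*(x)/\varphi^*(\hat x)$ for all $x,\hat x\in\cD$, i.e.\ $\psi(x)/\varphi^*(x)\geq\psi(\hat x)/\varphi^*(\hat x)$; this inequality is symmetric in $x$ and $\hat x$, so $\psi/\varphi^*\equiv\kappa$ for some constant $\kappa>0$ on $\cD$, and since both functions vanish on $\cD^c$ we conclude $\psi=\kappa\varphi^*$ on $\Rd$; the identity $\lambda=\lambda^*$ was already obtained above. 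I expect the main obstacle to be the vanishing of the first optional-sampling term in the previous step — this is where the potential-enhancement device of Theorem~\ref{T1.1} is needed — together with the fact that this device only forces the enhanced principal eigenvalue above $\lambda^*$, which is why the preliminary reduction to $\lambda=\lambda^*$ must be carried out first.
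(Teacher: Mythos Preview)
Your proof is correct, but it differs from the paper's in two interesting ways.

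First, the paper does not pre-establish $\lambda=\lambda^*$. Instead, after the supermartingale/optional-sampling step it simply drops the nonnegative $\{t\le\Breve\uptau_r\}$ piece and applies Fatou's lemma as $t\to\infty$ to obtain
\[
\psi(x)\ \ge\ \ex^x\!\left[e^{\int_0^{\Breve\uptau_r}(\lambda-V(X_s))\,\D s}\psi(X_{\Breve\uptau_r})\Ind_{\{\Breve\uptau_r<\uptau_\cD\}}\right],
\]
and then uses $\lambda\ge\lambda^*$ to bound the exponential below by the one with $\lambda^*$ before invoking \eqref{ET1.1B}. So the potential-enhancement device of Theorem~\ref{T1.1} is not needed here at all; your spectral pairing and the whole $\widetilde V$-argument are correct but unnecessary detours (you identified this as the ``main obstacle,'' but Fatou plus $\psi\ge0$ sidesteps it entirely).

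Second, your endgame is cleaner than the paper's. From $\psi(x)/\varphi^*(x)\ge\psi(\hat x)/\varphi^*(\hat x)$ for all $x,\hat x\in\cD$ you immediately get constancy by symmetry. The paper instead fixes $\kappa=\psi(\hat x)/\varphi^*(\hat x)$ and argues by contradiction: if $\psi-\kappa\varphi^*>0$ on some ball $\overline{\sB_r(x_0)}$, it re-applies the stochastic inequality centered at $x_0$ together with the exact representation \eqref{ET1.1A} for $\varphi^*$ to force $0=\psi(\hat x)-\kappa\varphi^*(\hat x)>0$. Your symmetry observation makes this step a one-liner.
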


\begin{proof}
Fix any point $\hat{x}\in\cD$, and as before let $\breve\uptau_r$ denote the first hitting time of $\sB_r(\hat{x})$.
By definition, for $x\in\cD$ we have
$$
\psi(x)  \geq \ex^x\left[e^{\int_0^{t} (\lambda-V(X_s))\, \D{s}} \psi(X_{t})\Ind_{\{t<\uptau_\cD\}}\right],
$$
and thus by a super-martingale argument, as used in Theorem~\ref{T1.2}, we obtain for $x\in\bar\sB^c_r(\hat{x})$ that
$$
\psi(x)  \geq \ex^x\left[e^{\int_0^{t\wedge \Breve\uptau_r} (\lambda-V(X_s))\, \D{s}}
\psi(X_{t\wedge \Breve\uptau_r})\Ind_{\{t\wedge \Breve\uptau_r<\uptau_\cD\}}\right].
$$
Taking $t\to\infty$ and applying Fatou's lemma, we get
\begin{equation}\label{EP1.1A}
\psi(x)  \geq \ex^x\left[e^{\int_0^{\Breve\uptau_r} (\lambda-V(X_s))\, \D{s}}
\psi(X_{\Breve\uptau_r})\Ind_{\{\Breve\uptau_r<\uptau_\cD\}}\right], \quad x\in\bar\sB^c_r(\hat{x})\cap\cD\,.
\end{equation}
By letting $r\to 0$ in \eqref{EP1.1A} and making use of Theorem~\ref{T1.1} we have
$$
\psi(x)\geq \frac{\psi(\hat{x})}{\varphi^*(\hat{x})}\varphi^*(x), \quad x\in\cD.
$$
Clearly, this implies that if we choose $\kappa= \frac{\psi(\hat{x})}{\varphi^*(\hat{x})}$, then $\psi-
\kappa\varphi^*\geq 0$ in $\Rd$ and $\psi(\hat{x})=\kappa \varphi^*(\hat{x})$. Suppose that there exists
$x_0\in\cD$ such that for some $r>0$ we have $\psi(z)-\kappa\varphi^*(z)>0$ for $z\in\overline{\sB_r(x_0)}$.
We may choose $r$ small enough such that $\hat{x}\notin \overline {\sB_r(x_0)}\subset\cD$. Also, note that
\eqref{EP1.1A} stays valid if we change the reference point to $x_0$. Thus applying Theorem~\ref{T1.1} again,
we get
$$
0=\psi(\hat{x})-\kappa\varphi(\hat{x})\geq \ex^{\hat{x}}\left[e^{\int_0^{\Breve\uptau_r}(\lambda^*-V(X_s))\,\D{s}}
(\psi(X_{\Breve\uptau_r})-\kappa\varphi^*(X_{\Breve\uptau_r}))\Ind_{\{\Breve\uptau_r<\uptau_\cD\}}\right]\geq 0.
$$
Since $\Prob^{\hat{x}}(\Breve\uptau_r<\uptau_\cD)>0$ by \eqref{ET1.1A}, the above expression yields a
contradiction and thus no such $x_0$ exists. This proves $\psi=\kappa\varphi^*$.
\end{proof}

Now we propose a weak version of an anti-maximum principle. The difficulty in obtaining a full anti-maximum
principle is due to the lack of Hopf's lemma for a general class of operators.
Below we provide a technique which can be applied to a much larger class of operators than before.

\begin{theorem}[\textbf{Weak anti-maximum principle}]
\label{T1.4}
Suppose that the assertion of Theorem~\ref{T1.2} holds. Let $f\in \cC(\bar\cD)$ and $f\gneq 0$. Let $\cK \Subset
\cD$ be compact. Then there exists $\delta>0$ such that for every $\lambda^*<\lambda< \lambda^*+\delta$, any weak
solution of
$$
H^{\cD, V} \psi -\lambda \psi =f,
$$
satisfies $\psi<0$ in $\cK$.
\end{theorem}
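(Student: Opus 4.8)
The plan is to bypass any Hopf-type argument and instead exploit the spectral decomposition of the compact self-adjoint semigroup $\semi{T^{\cD,V}}$ supplied by Lemma~\ref{L1.1}. Since $T^{\cD,V}_1$ is Hilbert--Schmidt and $\lambda^*$ is simple, $H^{\cD,V}$ has discrete spectrum $\lambda^*<\lambda^{**}\leq\lambda^*_3\leq\cdots\to\infty$ on $L^2(\cD)$, with a genuine gap $\lambda^{**}-\lambda^*>0$. Write $L^2(\cD)=\RR\varphi^*\oplus W$ with $W=(\varphi^*)^{\perp}$; both summands are invariant under every $T^{\cD,V}_t$, and $\operatorname{spec}(H^{\cD,V}|_W)\subset[\lambda^{**},\infty)$, so $H^{\cD,V}|_W-\lambda$ is invertible with $\norm{(H^{\cD,V}|_W-\lambda)^{-1}}_{L^2\to L^2}\leq(\lambda^{**}-\lambda)^{-1}$ for every $\lambda<\lambda^{**}$. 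Given $f\in\cC(\bar\cD)$ with $f\gneq 0$, set $a=\langle f,\varphi^*\rangle/\norm{\varphi^*}_2^2$; since $\varphi^*>0$ in $\cD$ one has $a>0$, and $f=a\varphi^*+g$ with $g\in W\cap\cC(\bar\cD)$.

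First I would identify the weak solution. Repeating the computation of Proposition~\ref{P1.2} with $V$ replaced by the bounded potential $V-\lambda$ (note $T^{\cD,V-\lambda}_t=e^{\lambda t}T^{\cD,V}_t$) shows that any weak solution $\psi$ of the Dirichlet problem $H^{\cD,V}\psi-\lambda\psi=f$ in $\cD$, $\psi=0$ in $\cD^c$---continuous and supported in $\bar\cD$, hence in $L^2(\cD)$---satisfies
$$
\psi = e^{\lambda t}T^{\cD,V}_t\psi + \int_0^t e^{\lambda s}T^{\cD,V}_s f\,\D s, \qquad t>0.
$$
Projecting this identity onto $\varphi^*$ pins the $\varphi^*$-component of $\psi$ to $a/(\lambda^*-\lambda)$; projecting onto $W$ and letting $t\to\infty$---now legitimate because $\norm{e^{\lambda t}T^{\cD,V}_t|_W}_{L^2\to L^2}=e^{(\lambda-\lambda^{**})t}\to 0$ when $\lambda<\lambda^{**}$---gives $\psi_W\df\psi-\tfrac{a}{\lambda^*-\lambda}\varphi^*=\int_0^\infty e^{\lambda t}T^{\cD,V}_t g\,\D t=(H^{\cD,V}|_W-\lambda)^{-1}g$. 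In particular the weak solution is unique for $\lambda\in(\lambda^*,\lambda^{**})$ and $\psi=\tfrac{a}{\lambda^*-\lambda}\varphi^*+\psi_W$.

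The hard part is to bound $\norm{\psi_W}_\infty$ by a constant independent of $\lambda$ as $\lambda\downarrow\lambda^*$, which forces one to interlock the $L^2$ spectral-gap estimate with the $L^2\to L^\infty$ smoothing of $T^{\cD,V}_1$. Splitting the last integral at $t=1$ and using the semigroup property gives $\psi_W=\int_0^1 e^{\lambda t}T^{\cD,V}_t g\,\D t+e^{\lambda}T^{\cD,V}_1\psi_W$. The first summand has sup-norm at most $e^{\lambda}\norm g_\infty\sup_{t\leq 1}\norm{T^{\cD,V}_t}_{L^\infty\to L^\infty}\leq e^{\lambda(1+\norm V_\infty)}\norm g_\infty$. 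For the second, Lemma~\ref{L1.1}(i) and the pointwise bound $T^{\cD,V}(1,x,y)\leq e^{\norm V_\infty}q_1(x-y)$ give $\norm{T^{\cD,V}_1}_{L^2\to L^\infty}\leq e^{\norm V_\infty}\norm{q_1}_2<\infty$ (here $\norm{q_1}_2<\infty$ since $q_1$ is bounded by the Hartman--Wintner condition and integrates to $1$), so $\norm{e^{\lambda}T^{\cD,V}_1\psi_W}_\infty\leq e^{\lambda}\norm{T^{\cD,V}_1}_{L^2\to L^\infty}\norm{\psi_W}_2\leq e^{\lambda}\norm{T^{\cD,V}_1}_{L^2\to L^\infty}(\lambda^{**}-\lambda)^{-1}\norm g_2$. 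Restricting to $\lambda^*<\lambda\leq\lambda^*+\tfrac12(\lambda^{**}-\lambda^*)$ makes every factor uniformly bounded, so $\norm{\psi_W}_\infty\leq M$ for some $M=M(f,\cD,\Psi,V)$.

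Finally, with $\cK\Subset\cD$ fixed, put $m_\cK=\min_\cK\varphi^*>0$ (strict positivity and continuity of $\varphi^*$). For $x\in\cK$,
$$
\psi(x)=\frac{a}{\lambda^*-\lambda}\,\varphi^*(x)+\psi_W(x)\leq -\frac{a\,m_\cK}{\lambda-\lambda^*}+M,
$$
which is strictly negative once $\lambda-\lambda^*<a\,m_\cK/M$. Hence the claim holds with $\delta=\min\bigl\{\tfrac12(\lambda^{**}-\lambda^*),\,a\,m_\cK/M\bigr\}$ (and $\delta=\tfrac12(\lambda^{**}-\lambda^*)$ suffices trivially if $g\equiv 0$, i.e.\ $M=0$). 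The only genuine obstacle is the third step---the sup-norm bound on $\psi_W$ uniform in $\lambda$; the remaining steps are bookkeeping around the decomposition $\psi=\tfrac{a}{\lambda^*-\lambda}\varphi^*+\psi_W$ and the blow-up of the scalar $a/(\lambda^*-\lambda)$ as $\lambda\downarrow\lambda^*$.
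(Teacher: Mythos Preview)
Your argument is correct and takes a genuinely different route from the paper's. The paper proceeds by contradiction: assuming a sequence $\lambda_n\searrow\lambda^*$ with weak solutions $\psi_n$ that are non-negative somewhere in $\cK$, it separates the cases $\limsup_n\norm{\psi_n}_\infty<\infty$ and $=\infty$. In the first case a compactness argument produces a limit $\psi_0$ solving $(I-T^{\cD,V-\lambda^*}_t)\psi_0=g$ with $g\gneq 0$, and the Fredholm alternative forces $\langle g,\varphi^*\rangle=0$, a contradiction. In the second case the normalized $\tilde\psi_n$ converge to $\kappa\varphi^*$; the possibility $\kappa>0$ is ruled out by invoking the stochastic representation of Theorem~\ref{T1.1} together with Proposition~\ref{P1.1} to show $\psi_n$ would have to be a positive multiple of $\varphi^*$, forcing $f=0$.

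Your spectral-decomposition approach is more direct and constructive: you identify the weak solution explicitly as $\psi=\tfrac{a}{\lambda^*-\lambda}\varphi^*+\psi_W$ with $\psi_W=(H^{\cD,V}|_W-\lambda)^{-1}g$, and the crux is the uniform sup-norm bound on $\psi_W$ obtained by bootstrapping the $L^2$ spectral-gap estimate through the $L^2\to L^\infty$ smoothing of $T^{\cD,V}_1$. This yields an explicit $\delta$ in terms of $a$, $m_\cK$, $\lambda^{**}-\lambda^*$ and the smoothing constants, and it bypasses Theorem~\ref{T1.1} and Proposition~\ref{P1.1} entirely. The price is that you lean more heavily on self-adjointness and the simplicity of $\lambda^*$ to get a clean orthogonal splitting and the resolvent bound on $W$; the paper's argument, being softer, stays closer to the probabilistic theme and would adapt more readily to non-self-adjoint perturbations. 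One cosmetic point: your displayed bound $e^{\lambda(1+\norm{V}_\infty)}\norm{g}_\infty$ on the short-time integral should read $e^{\lambda+\norm{V^-}_\infty}\norm{g}_\infty$ (the $\norm{V}_\infty$ belongs in the exponent additively, not multiplicatively), but this is inconsequential.
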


\begin{proof}
We proceed by contradiction and start by assuming that no such $\delta$ exists. Hence there exist a sequence $\lambda_n
\searrow \lambda^*$ and corresponding weak solutions $\psi_n$, non-negative at a suitable point in $\cK$. By the definition
of a weak solution, for $x\in\cD$
\begin{equation}\label{ET1.4A}
\psi_n(x)= \ex^x\left[e^{\int_0^t (\lambda_n-V(X_{s}))\, \D{s}} \psi_n(X_t)\Ind_{\{t<\uptau_\cD\}}\right] +
\ex^x\left[\int_0^{t\wedge\uptau_\cD} e^{\int_0^s(\lambda_n-V(X_{r})) \, \D{r}} f(X_s)\right]
\end{equation}
holds. Note that $\liminf_{n\to\infty} \norm{\psi_n}_\infty>0$ or else, by taking the limit in \eqref{ET1.4A}, one
would obtain
$$
\ex^x\left[\int_0^{t\wedge\uptau_\cD}e^{\int_0^s(\lambda-V(X_{r}))\,\D{r}} f(X_s)\right]=0\,, \quad x\in\cD,\quad t>0,
$$
which is impossible as $f\gneq 0$. Now we split the proof in two cases.

\medskip
\noindent
{\bf Case 1:}\, First suppose $\limsup_{n\to\infty} \norm{\psi_n}_\infty<\infty$. Then we can extract a weakly
convergent subsequence, which we keep denoting in the same way, such that $\psi_n\rightharpoonup \psi_0 \in
L^2(\cD)$. Recall that for
\begin{equation}\label{ET4.4B}
T^{\cD, V-\lambda}(t, x, y) =
\ex^0_{\Prob_S}\left[p_{S^\Psi_t} (x-y) \ex^{x, y}_{0, S^\Psi_t} \left[e^{\int_0^t (\lambda-V(B_s))\, \D{s}}
\Ind_{\{\uptau_\cD>t\}}\right]\right],
\end{equation}
we have
$$
\ex^x\left[e^{\int_0^t (\lambda-V(X_{s}))\, \D{s}} \psi_n(X_t)\Ind_{\{t<\uptau_\cD\}}\right]=
\int_{\cD} T^{\cD, V-\lambda_n}(t, x, y)\psi_n(y)\,\D{y}.
$$
Since for every fixed $x\in\cD$ we have from \eqref{ET4.4B}
$$
T^{\cD, V-\lambda_n}(t, x, \cdot)\to T^{\cD, V-\lambda^*}(t, x, \cdot) \quad \text{in}\; L^2(\cD),
$$
it follows from \eqref{ET1.4A} that $\psi_n(x)\to\psi_0(x)$ as $n\to\infty$. From the assumption that
$\limsup_{n\to\infty} \norm{\psi_n}_\infty<\infty$ this furthermore gives $\norm{\psi_n-\psi_0}_{2, \cD}\to 0$,
and thus $\norm{\psi_n-\psi_0}_\infty\to 0$ as $n\to\infty$, using again \eqref{ET1.4A}. For fixed $t$, denote
$$
g(x)=\ex^x\left[\int_0^{t\wedge\uptau_\cD} e^{\int_0^s(\lambda^*-V(X_{r})) \, \D{r}}
f(X_s)\right]\in L^\infty(\cD).
$$
Taking the limit in \eqref{ET1.4A} we obtain
$$
\psi_0 (x) = T^{\cD, V-\lambda^*}_t \psi_0(x) + g(x), \quad x\in\cD\,.
$$
Note that $\psi_0\neq 0$ since $g\neq 0$. Hence we have a non-trivial solution for $(I-T^{\cD, V-\lambda^*}_t)\xi=g$
where $T^{\cD, V-\lambda^*}_t$ is a compact, self-adjoint operator. Thus by Fredholm alternative $g\in \mathrm{Ker}
(I-T^{\cD, V-\lambda^*}_t)^\perp$, implying $\langle g, \varphi^*\rangle =0$. However, this is not possible as
$g\gneq 0$ and $\varphi^*$ is also positive in $\cD$, which is a contradiction.

\medskip
\noindent
{\bf Case 2:}\, Next suppose $\limsup_{n\to\infty} \norm{\psi_n}_\infty=\infty$. Define $\tilde\psi_n =
\frac{1}{\norm{\psi_n}_\infty}\psi_n$. Repeating the argument of the previous case, we find $\psi_0\in
\cC(\bar\cD)$ satisfying
\begin{equation}
\psi_0 (x) = T^{\cD, V-\lambda^*}_t \psi_0,
\end{equation}
and $\norm{\tilde\psi_n-\psi_0}_\infty\to 0$ as $n\to\infty$. By the uniqueness of the principal eigenfunction we
have $\psi_0=\kappa\varphi^*$, for $\kappa\neq 0$. Note that $\kappa< 0$ is not possible, as this would imply for
large enough $n$ that $\psi_n<0$ on $\cK$, contradicting the assumption. In case that $\kappa>0$ we infer that
$\psi_n$ is strictly positive on $\cK$ for all large enough $n$. From \eqref{ET1.4A} we have
\begin{equation}\label{ET1.4B}
\psi_n(x)\geq  \ex^x\left[e^{\int_0^t (\lambda_n-V(X_{s}))\, \D{s}} \psi_n(X_t)\Ind_{\{t<\uptau_\cD\}}\right] .
\end{equation}
Choose a point $\hat{x}\in\cK$ and consider the potential
\[
\widetilde{V}=\left\{\begin{array}{ll}
V & \text{for}\; x\in \sB_r(\hat{x}) \cap \cD,
\\
\norm{V}_\infty +1 & \text{for}\; x\in \sB_r(\hat{x}).
\end{array}
\right.
\]
We can choose $r$ small enough such that $\psi_n$ is positive in $\sB_r$ for all sufficiently large $n$. By the
same argument as used in Theorem~\ref{T1.1}, we can find $\varepsilon>0$ such that $\lambda^*_{\varepsilon}>\lambda^*$,
where $\lambda^*_{\varepsilon}$ is the principal eigenvalue of $H^{\cD, \tilde V}$. Now choose $n$ large
such that $\lambda^*_{\varepsilon}>\lambda_n$. Then following the argument of Theorem~\ref{T1.1} it is seen that
$$
\ex^x\left[e^{\int_0^{t} (\lambda_n-V(X_s))\, \D{s}} \psi_n(X_{t})\Ind_{\{t<\uptau_\cD\}}
\Ind_{\{t\leq \Breve\uptau_r\}}\right]\to 0,
$$
as $t\to\infty$. Thus we obtain from \eqref{ET1.4B} (see also \eqref{EP1.1A}) that for $x\in \sB^c_r(\hat{x})$,
$$
\psi_n(x)\geq  \ex^x\left[e^{\int_0^{\breve\uptau_r} (\lambda_n-V(X_{s}))\, \D{s}}
\psi_n(X_{\breve\uptau_r})\Ind_{\{\breve\uptau_r<\uptau_\cD\}}\right]>0\,.
$$
Hence for all large enough $n$ we have $\psi_n>0$ in $\cD$ and it is a weak super-solution. Thus by Proposition~\ref{P1.1}
$\psi_n=\kappa_n\varphi^*$ for some $\kappa_n>0$ and $\lambda^*=\lambda_n$. This also implies $f=0$, which is a
contradiction. Hence no $\lambda_n$ with such property exists. This proves the existence of $\delta$ as stated in the
assertion.
\end{proof}

The above result can be upgraded to a full anti-maximum principle by restricting to fractional Schr\"{o}dinger operators,
for which a counterpart of Hopf's lemma is available. This is the content of the following result.

\begin{theorem}[\textbf{Anti-maximum principle for fractional Schr\"odinger operators}]
\label{antimaxx}
Let $\cD$ be a $\cC^{1,1}$ domain and $\alpha\in(0,2)$. Consider $f\in \cC(\bar\cD)$ and $f\gneq 0$. Furthermore,
assume that $V$ is H\"{o}lder continuous on $\cD$. Then there exists $\delta>0$ such that for every $\lambda^*<\lambda<
\lambda^*+\delta$, the weak solution of
$$
(-\Delta)^{\nicefrac{\alpha}{2}}\psi + V\psi - \lambda\psi =f\;\;\text{in}\;\; \cD,\quad \psi=0 \;\; \text{in}\;\;\cD^c,
$$
satisfies $\psi<0$ in $\cD$.
\end{theorem}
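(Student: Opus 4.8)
The plan is to exploit the spectral decomposition of the resolvent of $H := (-\Delta)^{\nicefrac{\alpha}{2}} + V$ (with Dirichlet exterior condition on the $\cC^{1,1}$ domain $\cD$) together with the sharp boundary behaviour of solutions of the fractional Dirichlet problem. Let $\lambda^* = \lambda_1 < \lambda_2 \le \lambda_3 \le \cdots$ be the Dirichlet eigenvalues, with $L^2(\cD)$-orthonormal eigenfunctions $\varphi^* = \varphi_1, \varphi_2, \dots$ (normalize $\norm{\varphi^*}_{2,\cD}=1$), and fix $\lambda \in (\lambda^*, \lambda_2)$, so that $\lambda \notin \Spec(H)$ and $H - \lambda$ is boundedly invertible on $L^2(\cD)$. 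The weak solution $\psi_\lambda$ exists and is unique: by the weak-solution identity (cf.\ \eqref{ET1.4A}) any two weak solutions $\psi,\psi'$ satisfy $\psi = T^{\cD, V-\lambda}_t\psi + \int_0^t T^{\cD, V-\lambda}_s f\,\D{s}$, so $\psi-\psi'$ is fixed by every $T^{\cD,V-\lambda}_t$, hence an $L^2(\cD)$-eigenfunction of $H$ at $\lambda\notin\Spec(H)$, and therefore vanishes; consequently $\psi_\lambda = (H-\lambda)^{-1}f$. Projecting onto $\varphi^*$ yields the decomposition
\begin{equation*}
\psi_\lambda = c_\lambda\, \varphi^* + R_\lambda, \qquad c_\lambda = \frac{\langle f, \varphi^*\rangle}{\lambda^* - \lambda}, \qquad R_\lambda = (H - \lambda)^{-1}\bigl(f - \langle f, \varphi^*\rangle\, \varphi^*\bigr),
\end{equation*}
with $\langle R_\lambda,\varphi^*\rangle = 0$. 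Since $f \gneq 0$ and $\varphi^* > 0$ in $\cD$ we have $\langle f, \varphi^*\rangle > 0$, hence $c_\lambda < 0$; crucially, $c_\lambda \to -\infty$ as $\lambda \downarrow \lambda^*$.

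The next step is to bound $R_\lambda$ uniformly for $\lambda \in (\lambda^*, (\lambda^*+\lambda_2)/2]$. On $\{\varphi^*\}^\perp$ one has $H \ge \lambda_2$, so $\norm{R_\lambda}_{2,\cD} \le (\lambda_2 - \lambda)^{-1}\norm{f - \langle f, \varphi^*\rangle\varphi^*}_{2,\cD}$ is bounded uniformly in $\lambda$; a standard bootstrap using the ultracontractivity of $\{T^{\cD, V}_t\}$ (Lemma~\ref{L2.1}, applicable since $\Psi(u) = u^{\alpha/2}$ is strictly increasing and satisfies Assumption~\ref{WLSC}(i)) promotes this to a uniform bound $\norm{R_\lambda}_{\infty,\cD} \le M$. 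Since $R_\lambda$ solves
\begin{equation*}
(-\Delta)^{\nicefrac{\alpha}{2}} R_\lambda = h_\lambda \;\;\text{in}\;\; \cD, \qquad R_\lambda = 0 \;\;\text{in}\;\; \cD^c, \qquad h_\lambda := (\lambda - V)R_\lambda + f - \langle f, \varphi^*\rangle\varphi^*,
\end{equation*}
with $\norm{h_\lambda}_{\infty,\cD}$ bounded uniformly in $\lambda$, the boundary regularity theory for the Dirichlet problem of the fractional Laplacian on $\cC^{1,1}$ domains gives a constant $C_1$, independent of $\lambda$, such that
\begin{equation*}
\abs{R_\lambda(x)} \le C_1\, \dist(x, \partial\cD)^{\nicefrac{\alpha}{2}}, \qquad x \in \cD
\end{equation*}
(away from $\partial\cD$ this is just $\norm{R_\lambda}_\infty \le M$ combined with $\dist(\cdot,\partial\cD)\le\diam\cD$; near $\partial\cD$ it is the fractional boundary estimate).

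On the other hand, $\varphi^*$ is the principal eigenfunction of $(-\Delta)^{\nicefrac{\alpha}{2}} + V$ on $\cD$ and solves $(-\Delta)^{\nicefrac{\alpha}{2}}\varphi^* = (\lambda^* - V)\varphi^*$ with bounded right-hand side; the fractional Hopf lemma (available since $\cD$ is $\cC^{1,1}$ and $\alpha\in(0,2)$) yields $c_0 > 0$ with $\varphi^*(x) \ge c_0\, \dist(x, \partial\cD)^{\nicefrac{\alpha}{2}}$ for $x \in \cD$. Combining the last three displays,
\begin{equation*}
\psi_\lambda(x) \le \bigl(c_\lambda c_0 + C_1\bigr)\, \dist(x, \partial\cD)^{\nicefrac{\alpha}{2}}, \qquad x \in \cD.
\end{equation*}
It then remains to choose $\delta$: since $c_\lambda = \langle f, \varphi^*\rangle/(\lambda^* - \lambda) \to -\infty$, pick $\delta \in (0, \lambda_2 - \lambda^*)$ small enough that $\lambda^* < \lambda < \lambda^* + \delta$ forces $c_\lambda c_0 + C_1 < 0$. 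For such $\lambda$ the prefactor is negative and $\dist(x, \partial\cD)^{\nicefrac{\alpha}{2}} > 0$ for every $x \in \cD$, so $\psi_\lambda < 0$ throughout $\cD$; incidentally this also recovers the interior statement of Theorem~\ref{T1.4}.

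The main obstacle is obtaining the estimate on $R_\lambda$ with a constant that does not blow up as $\lambda \downarrow \lambda^*$, and with the precise boundary rate $\dist(\cdot, \partial\cD)^{\nicefrac{\alpha}{2}}$ matching that of $\varphi^*$: the former is where the spectral gap $\lambda_2 - \lambda^* > 0$ and ultracontractivity are essential, and the latter is exactly what confines the argument to the fractional Laplacian, for which Hopf's lemma and $\cC^{1,1}$ boundary regularity are available and $\varphi^*$ decays precisely like $\dist(\cdot, \partial\cD)^{\nicefrac{\alpha}{2}}$.
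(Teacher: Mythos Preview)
Your proof is correct and takes a genuinely different route from the paper's. The paper argues by contradiction, in the spirit of its weak anti-maximum principle (Theorem~\ref{T1.4}): assuming no $\delta$ works, it takes $\lambda_n\searrow\lambda^*$ with solutions $\psi_n$ non-negative somewhere in $\cD$, normalizes $\tilde\psi_n=\psi_n/\norm{\psi_n}_\infty$, and extracts a limit $\psi_0=\kappa\varphi^*$. The cases $\kappa>0$ and $\norm{\psi_n}_\infty$ bounded are disposed of exactly as in Theorem~\ref{T1.4}; the new case is $\kappa<0$, where the paper invokes the Ros-Oton--Serra estimate \cite{RS-14} to upgrade the convergence to $\tilde\psi_n/\delta^{\alpha/2}\to\psi_0/\delta^{\alpha/2}$ uniformly on $\bar\cD$, then uses the fractional Hopf lemma \cite{GS-16} at the boundary accumulation point to reach a contradiction.

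Your argument is direct and quantitative: you isolate the singular part $c_\lambda\varphi^*$ of the resolvent via spectral projection, control the orthogonal remainder $R_\lambda$ uniformly in the weighted norm $\norm{R_\lambda/\delta^{\alpha/2}}_\infty$ (using the spectral gap for the $L^2$ bound, ultracontractivity for the $L^\infty$ bound, and \cite{RS-14} for the boundary rate), and then match this against the Hopf lower bound $\varphi^*\ge c_0\delta^{\alpha/2}$. The two approaches use the same fractional-specific ingredients (boundary regularity and Hopf), but yours avoids compactness entirely and in principle yields an explicit $\delta$ in terms of $\langle f,\varphi^*\rangle$, the spectral gap $\lambda_2-\lambda^*$, and the boundary constants. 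The paper's route, by contrast, recycles the machinery of Theorem~\ref{T1.4} and stays closer to the probabilistic framework of the article.
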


\begin{proof}
We proceed along the argument in Theorem~\ref{T1.4}. It is straightforward to see that the argument in Case 1 applies
in a similar way. For Case 2 we only need to consider the situation where $\psi_0<0$ in $\cD$. Recall that $\psi_0=
\kappa\varphi^*$ for some $\kappa<0$. Since $V$ and $f$ are continuous, we see that $\tilde\psi_n$ is a viscosity solution
(see Remark \ref{R3.1} above) to
$$
(-\Delta)^{\nicefrac{\alpha}{2}} \tilde\psi_n  + V\tilde\psi_n -\lambda_n \tilde\psi_n =\tilde{f}_n\; \text{in}\; \cD,
\quad \psi=0 \;\; \text{in}\; \cD^c,
$$
where $\tilde{f}_n=\frac{1}{\norm{\psi_n}} f$. Let $\delta(\cdot)$ be the distance function from the boundary of $\cD$.
Then we obtain from \cite[Theorem~1.2]{RS-14} that for a positive $\beta<\min\{\alpha, 1-\alpha\}$ the function
$\frac{\tilde\psi_n} {\delta^{\nicefrac{\alpha}{2}}}$ is in $\cC^\beta(\bar\cD)$, uniformly in $n$, and thus
\begin{equation}\label{101}
\sup_{x\in\bar\cD}\left|\frac{\tilde\psi_n(x)}{\delta^{\nicefrac{\alpha}{2}}(x)} -
\frac{\psi_0(x)}{\delta^{\nicefrac{\alpha}{2}}(x)}\right|\to 0,
\end{equation}
as $n\to \infty$, along a suitable subsequence. Since $\tilde\psi_n$ is non-negative at some point in $\cD$, we find a
sequence of points $\seq x\subset\cD$ such that $\tilde\psi_n(x_n)\geq 0$. Therefore, passing to the limit and assuming
$x_n\to x_0$, we obtain from \eqref{101}
\begin{equation}\label{102}
\frac{\psi_0(x_0)}{\delta^{\nicefrac{\alpha}{2}}(x_0)}\geq 0.
\end{equation}
Since $\psi_0<0$ in $\cD$, it is clear that $x_0\in\partial\cD$, hence $\psi_0$ attains its maximum at $x_0$. Since $V$
is H\"{o}lder continuous, the equation
$$
(-\Delta)^{\nicefrac{\alpha}{2}} \psi_0  + V \psi_0 =\lambda^* \psi_0 \;\; \text{in}\;\; \cD, \;\;\; \psi=0
\;\; \text{in}\; \cD^c,
$$
holds pointwise, see \cite{RS-14}. Then
$$
(-\Delta)^{\nicefrac{\alpha}{2}} \psi_0  + (V-\lambda^*)^+ \psi_0\leq 0, \quad \text{pointwise in}\; \cD.
$$
Thus by \cite[Lem.~1.2]{GS-16} we obtain
$$
\lim_{\cD\ni y\to x_0}\frac{\psi_0(y)}{\delta^{\nicefrac{\alpha}{2}}(y)}<0.
$$
This contradicts \eqref{102}, and the remaining part of the proof can be completed as in Theorem~\ref{T1.4}.
\end{proof}

The Feynman-Kac representation is also useful in obtaining a maximum principle for narrow domains for $\Psidel$.
This gives a counterpart to non-local operators of the known result for elliptic operators. A version of this
result has been established for classical solution of the fractional Laplacian \cite{FW14}. We are not aware of
any such results for this general class of operators.
\begin{theorem}[\textbf{Maximum principle in narrow domains}]
\label{narrow}
Suppose that $\cD$ is a convex (bounded or unbounded) domain of finite inradius, and let $\varphi$ be a weak
sub-solution of
$$
H^{\cD, V} \varphi \leq 0 \;\text{in}\; \cD, \quad \varphi\leq 0\;\; \text{in}\;\; \cD^c.
$$
There exists a constant $\theta > 0$, independent of $\Psi$ and  $\cD$, such that if
$$
\norm{V^-}_{\infty, \cD} -\inf_{\cD}V^+\, < \theta\, \Psi([\inrad \cD]^{-2}),
$$
then $\varphi \leq 0$ in $\cD$. Moreover, either $\varphi=0$ or $\varphi<0$ in $\cD$.
\end{theorem}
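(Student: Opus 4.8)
The plan is to combine the Feynman--Kac (semigroup) characterisation of a weak sub-solution in Definition~\ref{D3.1} with an exponential moment bound for the exit time $\uptau_\cD$ that depends on $\cD$ only through its inradius. Write $\rin=\inrad\cD$, $\beta=\norm{V^-}_{\infty,\cD}-\inf_\cD V^+$, and $M=\sup_\cD\varphi^+$ (we take $\varphi$ bounded above, so $M<\infty$; if $M=0$ there is nothing to prove for the first claim). Since $\varphi$ is a weak sub-solution of $H^{\cD,V}\varphi\le0$ with $\varphi\le0$ on $\cD^c$, and $X_{\uptau_\cD}\in\cD^c$ so that $\varphi^+(X_{\uptau_\cD})=0$, Definition~\ref{D3.1} gives for every $x\in\cD$ and $t>0$
\begin{equation*}
\varphi(x)\;\le\;\ex^x\!\left[e^{-\int_0^{t\wedge\uptau_\cD}V(X_s)\,\D s}\,\varphi^+(X_{t\wedge\uptau_\cD})\right]\;=\;\ex^x\!\left[e^{-\int_0^{t}V(X_s)\,\D s}\,\varphi^+(X_{t})\,\Ind_{\{t<\uptau_\cD\}}\right].
\end{equation*}
On $\{t<\uptau_\cD\}$ one has $X_s\in\cD$ for all $s\le t$, hence $-V(X_s)=V^-(X_s)-V^+(X_s)\le\beta$, so the exponential factor is at most $e^{\beta t}$; taking the supremum over $x\in\cD$ therefore yields $M\le e^{\beta t}M\sup_{x\in\cD}\Prob^x(\uptau_\cD>t)$ for all $t>0$. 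It thus suffices to produce a single $t>0$ with $e^{\beta t}\sup_{x\in\cD}\Prob^x(\uptau_\cD>t)<1$, which forces $M=0$.

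Here convexity enters. By Steinhagen's inequality a convex body of inradius $\rin$ is contained, after a rigid motion, in a slab $\{x:\abs{x_1}<\ell\}$ with $\ell\le c_d\,\rin$, where $c_d$ depends only on $d$. Hence $\uptau_\cD$ is dominated by the first exit time $\eta$ from $(-\ell,\ell)$ of the one-dimensional marginal $\langle X_\cdot,e_1\rangle$, which is itself a one-dimensional subordinate Brownian motion subordinated by $\pro{S^\Psi}$ and started inside $(-\ell,\ell)$. Applying Lemma~\ref{L3.1} to this process on $(-\ell,\ell)$ (of diameter $2\ell$) gives $\sup_{x\in\cD}\ex^x[\uptau_\cD^k]\le k!\,(c\,\Psi((2\ell)^{-2})^{-1})^{k}$ for all $k\in\NN$, with $c=c(d)$; summing the exponential series, $\sup_{x\in\cD}\ex^x[e^{s\uptau_\cD}]\le(1-sm)^{-1}$ for $0<s<1/m$, where $m:=c\,\Psi((2\ell)^{-2})^{-1}$, and Markov's inequality then gives $\sup_{x\in\cD}\Prob^x(\uptau_\cD>t)\le e^{-st}(1-sm)^{-1}$. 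The decisive point making the threshold independent of $\Psi$ is that, since $\Psi$ is increasing, $2\ell\le2c_d\rin$, and $\Psi$ is concave with $\lim_{u\downarrow0}\Psi(u)=0$ (so $\Psi(au)\ge a\Psi(u)$ for $0<a\le1$), one has $\Psi((2\ell)^{-2})\ge(4c_d^2)^{-1}\Psi(\rin^{-2})$, whence $m\le\kappa\,\Psi(\rin^{-2})^{-1}$ with $\kappa=4c_d^2c$ depending only on $d$. Setting $\theta:=1/\kappa$, the hypothesis $\beta<\theta\,\Psi(\rin^{-2})$ yields $\beta<1/m$, so one may fix $s$ with $\beta<s<1/m$; then $e^{\beta t}\sup_{x\in\cD}\Prob^x(\uptau_\cD>t)\le e^{(\beta-s)t}(1-sm)^{-1}\to0$ as $t\to\infty$, contradicting $M>0$. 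Hence $\varphi\le0$ in $\cD$, with $\theta=\theta(d)$.

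For the alternative ``$\varphi=0$ or $\varphi<0$ in $\cD$'', suppose $\varphi\not\equiv0$ on $\cD$, so $\varphi(x_0)<0$ for some $x_0\in\cD$, and suppose for contradiction $\varphi(\hat x)=0$ for some $\hat x\in\cD$. Evaluating Definition~\ref{D3.1} at $\hat x$: the random variable $e^{-\int_0^{t\wedge\uptau_\cD}V(X_s)\,\D s}\varphi(X_{t\wedge\uptau_\cD})$ is $\le0$ (as $\varphi\le0$ on $\Rd$) but has expectation $\ge\varphi(\hat x)=0$, hence vanishes $\Prob^{\hat x}$-a.s.; since the exponential factor is positive, $\varphi(X_{t\wedge\uptau_\cD})=0$ a.s., and in particular $\varphi(X_t)\Ind_{\{t<\uptau_\cD\}}=0$ a.s. Thus $\int_\cD\varphi(y)\,p^{\cD}_t(\hat x,y)\,\D y=0$, where $p^{\cD}_t$ is the transition density of $\pro X$ killed on exiting $\cD$, which is strictly positive on $\cD\times\cD$ by irreducibility of $\pro X$ on the connected domain $\cD$ (as in the proof of Theorem~\ref{T1.2}). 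Since $\varphi\le0$, this forces $\varphi\equiv0$ on $\cD$ by continuity, contradicting $\varphi(x_0)<0$; hence $\varphi<0$ in $\cD$ whenever $\varphi\not\equiv0$.

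The main obstacle I expect is the exit-time estimate in the second paragraph: recognising that convexity together with finiteness of the inradius is exactly what lets one trap $\uptau_\cD$ below the exit time of a one-dimensional interval of length $O(\rin)$ (via Steinhagen's bound on the minimal width), and then exploiting the concavity of the Bernstein function $\Psi$ to pull $\Psi((2\ell)^{-2})$ down to a $\Psi$-independent multiple of $\Psi([\inrad\cD]^{-2})$ --- this is what delivers a universal (dimension-only) constant $\theta$. The remaining pieces --- the factorial moment bound of Lemma~\ref{L3.1} turned into an exponential moment bound via Markov's inequality, and the irreducibility used for the strong maximum principle --- are routine.
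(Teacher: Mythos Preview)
Your argument is correct, but it is a genuinely different proof from the one in the paper. For the first claim, the paper does not go through Steinhagen's inequality and exit-time moment bounds at all: instead it takes a global maximizer $x^*$ of $\varphi^+$, sets $r=\dist(x^*,\partial\cD)$, and invokes \cite[Th.~3.2]{BL} directly to obtain $\norm{V^-}_{\infty,\cD}-\inf_\cD V^+\ge\theta\,\Psi(r^{-2})\ge\theta\,\Psi([\inrad\cD]^{-2})$ with a \emph{universal} constant $\theta\approx0.083$, which immediately contradicts the hypothesis. Your route --- trapping $\cD$ in a slab of width $O(\rin)$ via Steinhagen, dominating $\uptau_\cD$ by a one-dimensional exit time, summing the factorial bounds of Lemma~\ref{L3.1} into an exponential moment, and using concavity of $\Psi$ to absorb the slab constant --- is more self-contained (it does not leave the paper for the crucial inequality) and makes the role of convexity explicit, but it yields only $\theta=\theta(d)$ rather than a dimension-free constant. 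For the dichotomy $\varphi\equiv0$ or $\varphi<0$, the paper argues via the super-martingale property and the hitting time $\breve\uptau_r$ of a small ball around a point where $\varphi<0$ (as in \eqref{ET1.2A}), whereas you argue that at a putative zero $\hat x$ the sub-solution inequality forces $\varphi(X_t)\Ind_{\{t<\uptau_\cD\}}=0$ a.s., and then invoke strict positivity of the killed kernel; both are valid, and yours is arguably the more direct of the two.
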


\begin{proof}
By the definition of a sub-solution, we have for all $t\geq 0$ and $x\in\cD$ that
\begin{align*}
\varphi(x) &\leq \ex^x\left[ e^{-\int_0^{t\wedge \uptau_\cD} V(X_s)\, \D{s}} \varphi(X_{t\wedge\uptau_\cD})\right] \\
& \leq \ex^x\left[ e^{-\int_0^{t} V(X_s)\, \D{s}} \varphi(X_{t})\Ind_{\{t<\uptau_\cD\}}\right]
\leq \ex^x\left[ e^{-\int_0^{t} V(X_s)\, \D{s}} \varphi^+(X_{t})\Ind_{\{t<\uptau_\cD\}}\right].
\end{align*}
This representation was used as a key tool in \cite{BL}. Let $\varphi^+\neq 0$ and $x^*$ be a global maximizer of
$\varphi^+$. Write $r=\dist(x^*, \partial\cD)$. Then by \cite[Th.~3.2]{BL} there exists a universal constant
$\theta \approx 0.083$ satisfying
$$
\norm{V^-}_{\infty, \cD} -\inf_{\cD}V^+\, \geq \theta\, \Psi(r^{-2})\geq \theta\, \Psi([\inrad \cD]^{-2}).
$$
This leads to a contradiction to the assumption with the above choice of $\theta$. Hence $\varphi^+=0$.

To prove the second claim, assume that $\varphi(\hat{x})<0$, for some $\hat{x}\in\cD$. Choose $r$ small enough
such that $\sB_r(\hat x)\subset\cD$. Note that for $\psi=-\varphi$ we have
$$
\psi(x)\geq \ex^x\left[ e^{-\int_0^{t} V(X_s)\, \D{s}} \psi(X_{t})\Ind_{\{t<\uptau_\cD\}}\right].
$$
By a super-martingale argument, as used in Theorem~\ref{T1.2}, we obtain
$$
\psi(x)\geq \ex^x\left[ e^{-\int_0^{t\wedge\breve\uptau_r} V(X_s)\, \D{s}}
\psi(X_{t\wedge\breve\uptau_r})\Ind_{\{t\wedge\breve\uptau_r<\uptau_\cD\}}\right].
$$
Letting $t\to\infty$ and applying Fatou's lemma, we obtain for all $x\in \sB^c_r(\hat x)$
$$
\psi(x) \geq \ex^x\left[ e^{-\int_0^{\breve\uptau_r} V(X_s)\, \D{s}}
\psi(X_{\breve\uptau_r})\Ind_{\{\breve\uptau_r<\uptau_\cD\}}\right] >0\,.
$$
This proves the result.
\end{proof}

In the remaining part of this subsection we establish a refined version of the elliptic ABP estimate with the
help of Theorem~\ref{T1.1}. We begin with the following result, which might be known in some form but we provide
here a proof for self-containedness.
\begin{lemma}\label{L4.3}
Let $V$ be bounded. Then the map $\Lambda(s)=\lambda^*_{sV}$, where $\lambda^*_{sV}$ is the principal eigenvalue
with potential $sV$, is concave in $s$. Moreover, $\Lambda$ is Lipschitz continuous with constant $\norm{V}_{\infty, \cD}$.
\end{lemma}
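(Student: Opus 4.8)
The plan is to read off both properties from the variational description of the principal eigenvalue already used in the excerpt: for every $t>0$,
$$
e^{-\Lambda(s)t}=\max\bigl\{\langle T^{\cD,sV}_t\xi,\xi\rangle:\ \norm{\xi}_2=1\bigr\},
$$
see the proof of Lemma~\ref{L1.2}. Since the kernel $T^{\cD,sV}(t,x,y)$ in \eqref{EL1.2A} is nonnegative, $\langle T^{\cD,sV}_t\xi,\xi\rangle\le\langle T^{\cD,sV}_t\abs{\xi},\abs{\xi}\rangle$, so the maximum is unchanged if we restrict to $\xi\ge0$; moreover it is attained at $\varphi^*_{sV}>0$, so we may even restrict the supremum to $\xi$ that are strictly positive a.e.\ on $\cD$, for every value of $s$ simultaneously.

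For concavity, fix such a $\xi$. By the Feynman--Kac formula \eqref{semi} with $V$ replaced by $sV$,
$$
\langle T^{\cD,sV}_t\xi,\xi\rangle=\int_{\cD}\xi(x)\,\ex^x\!\Bigl[e^{-s\int_0^{t}V(X_r)\,\D r}\,\xi(X_t)\Ind_{\{t<\uptau_\cD\}}\Bigr]\D x.
$$
For each fixed $x$ the inner expectation equals $\int_{\RR}e^{-sz}\,\mu_x(\D z)$, where $\mu_x$ is the (compactly supported, since $V\in L^\infty$) nonnegative measure obtained by pushing the finite measure $\xi(X_t)\Ind_{\{t<\uptau_\cD\}}\,\D\Prob^x$ forward under $Z=\int_0^tV(X_r)\,\D r$; by H\"older's inequality $s\mapsto\int e^{-sz}\mu_x(\D z)$ is log-convex. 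Multiplying by $\xi(x)>0$ and integrating in $x$, and using once more that an integral of log-convex functions is log-convex (again H\"older), we get that $s\mapsto\langle T^{\cD,sV}_t\xi,\xi\rangle$ is log-convex, hence $s\mapsto\log\langle T^{\cD,sV}_t\xi,\xi\rangle$ is convex. Therefore $-t\,\Lambda(s)=\sup_{\xi}\log\langle T^{\cD,sV}_t\xi,\xi\rangle$ is convex as a supremum of convex functions, and dividing by $-t<0$ shows $\Lambda$ is concave.

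For the Lipschitz bound I would argue by monotonicity rather than spectral calculus. For $s\le s'$ one has, pointwise on $\cD$, $sV-(s'-s)\norm{V}_{\infty,\cD}\le s'V\le sV+(s'-s)\norm{V}_{\infty,\cD}$. Combining the elementary monotonicity of the principal eigenvalue in the potential (the inequality ``$\lambda^*_V\le\lambda^*_{\widetilde V}$ when $V\le\widetilde V$'' from the proof of Lemma~\ref{L1.2}(ii), immediate from the Feynman--Kac representation of $T^{\cD,V}_t$) with the identity $\lambda^*_{W+c}=\lambda^*_W+c$ for a constant $c$ (immediate from $T^{\cD,W+c}_t=e^{-ct}T^{\cD,W}_t$), we obtain
$$
\lambda^*_{sV}-(s'-s)\norm{V}_{\infty,\cD}\ \le\ \lambda^*_{s'V}\ \le\ \lambda^*_{sV}+(s'-s)\norm{V}_{\infty,\cD},
$$
that is, $\abs{\Lambda(s')-\Lambda(s)}\le\abs{s'-s}\,\norm{V}_{\infty,\cD}$.

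The only point needing care is the log-convexity bookkeeping in the concavity step — that the measures $\mu_x$ are genuinely nonnegative (this is exactly where $\xi\ge0$ enters) and that all Laplace transforms involved are finite so H\"older applies — both harmless since $V$ bounded makes $Z$ bounded and these transforms entire. As an alternative one may bypass this via the Rayleigh--Ritz principle: $\Lambda(s)$ is the infimum, over normalized $\xi$ in a suitable core, of $\langle\Psidel\,\xi,\xi\rangle+s\int_\cD V\abs{\xi}^2$, an infimum of affine functions of $s$, which simultaneously yields the Lipschitz constant since $\bigl|\int_\cD V\abs{\xi}^2\bigr|\le\norm{V}_{\infty,\cD}$ when $\norm{\xi}_2=1$; I include the semigroup argument because it relies only on facts already established in Sections~2--3.
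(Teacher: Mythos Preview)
Your proof is correct and follows essentially the same route as the paper: for concavity, both you and the paper apply H\"older's inequality to the Feynman--Kac/kernel representation to get log-convexity of $s\mapsto\langle T^{\cD,sV}_t\xi,\xi\rangle$ for nonnegative $\xi$, then take the supremum; for the Lipschitz bound, both arguments amount to the comparison $T^{\cD,V_1}_t f\le e^{t\norm{V_1-V_2}_{\infty}}T^{\cD,V_2}_t f$ for $f\ge0$, which is exactly your monotonicity-plus-shift step. The Rayleigh--Ritz alternative you mention at the end (infimum of affine functions of $s$) is a genuinely shorter route that the paper does not take, and it has the pleasant feature of giving concavity and the Lipschitz constant in one stroke.
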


\begin{proof}
Recall the semigroup operator $T^{\cD, V}_t $ and expression \eqref{EL1.2A}.
Note that for $s_\theta=\theta s_1 + (1-\theta) s_2$, $\theta\in[0, 1]$, we get by the H\"{o}lder inequality
$$
T^{\cD, s_\theta V}(t, x, y)\leq \left(T^{\cD, s_1 V}(t, x, y)\right)^\theta \,
\left(T^{\cD, s_2 V}(t, x, y)\right)^{1-\theta}.
$$
Thus
\begin{align*}
e^{-\Lambda(s_\theta) t}
& =
\sup \left\{ \int_{\cD}\int_{\cD} \psi(x) T^{\cD, s_\theta V}(t, x, y) \psi(y) \, \D{x}\D{y}
\; : \; \norm{\psi}_{2, \cD}=1,\,\psi\geq 0\right\}
\\
&\leq
\sup \Bigl\{ \Bigl(\int_{\cD}\int_{\cD} \psi(x) T^{\cD, s_1 V}(t, x, y) \psi(y) \, \D{x}\D{y}\Bigr)^\theta
\Bigl(\int_{\cD}\int_{\cD} \psi(x) T^{\cD, s_2 V}(t, x, y) \psi(y) \, \D{x}\D{y}\Bigr)^{1-\theta}\;
\\
&\, \hspace{1cm}  : \; \norm{\psi}_{2, \cD}=1,\,\psi\geq 0\Bigr\}
\\
&\leq e^{-\theta\Lambda(s_1) t} e^{-(1-\theta)\Lambda(s_2) t}.
\end{align*}
Hence we have $\Lambda(s_\theta)\geq \theta\Lambda(s_1) + (1-\theta)\Lambda(s_2)$. This proves concavity of
$\Lambda$. Since for $f\geq 0$,
$$
T^{\cD, V_1}_t f \leq e^{t \norm{V_1-V_2}_\infty} T^{\cD, V_2}_t f,
$$
the Lipschitz continuity of $\Lambda$ is straightforward.
\end{proof}

Now we are ready to prove a refined elliptic ABP-type estimate
\begin{theorem}[\textbf{Refined elliptic ABP estimate}]
\label{T4.7}
Let $\Psi \in \cB_0$ be strictly increasing and satisfy Assumptions~\ref{WLSC}(i) and ~\ref{As4.1}. Suppose that
$\lambda^*>0$. Let $\varrho^* =\inf\{s\geq 1:  \Lambda(s)\leq 0\}$ and $1<\varrho<\varrho^*$. Then for every
bounded weak super-solution $\varphi$ of
$$
H^{\cD, V} \varphi \leq f \;\; \mbox{in}\,\; \cD,\quad \varphi=0 \;\; \text{in}\;\; \cD^c,
$$
with $f\in L^{p^*}(\cD)$, $p>\frac{d}{2{\underline\mu}}$ and $p^*=\frac{\varrho\, p}{\varrho-1}$, there exists a constant
$C = C(p, k, d, \Psi, \cD, V)$ such that
\begin{equation}\label{ET1.6A}
\sup_{\cD} \varphi^+ \leq C\, \norm{f}_{p^*, \cD}\,.
\end{equation}
\end{theorem}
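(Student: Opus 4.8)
The plan is to reduce the statement to a single estimate for the functional
$$
I(x)\df\ex^x\!\left[\int_0^{\uptau_\cD}e^{-\int_0^s V(X_r)\,\D r}\,\abs{f(X_s)}\,\D s\right],\qquad x\in\cD,
$$
namely $\sup_\cD I\le C\norm{f}_{p^*,\cD}$, from which the theorem is immediate. First I would repeat the opening lines of the proof of Theorem~\ref{T1.6}, but keep the exponential weights since here $V$ is only bounded: the defining sub-solution inequality \eqref{D3.1A} together with $\varphi=0$ in $\cD^c$ gives, for every $t>0$,
$$
\varphi(x)\le\norm{\varphi^+}_\infty\,\ex^x\!\left[e^{-\int_0^t V(X_r)\D r}\Ind_{\{t<\uptau_\cD\}}\right]+\ex^x\!\left[\int_0^{t\wedge\uptau_\cD}e^{-\int_0^s V(X_r)\D r}f(X_s)\,\D s\right].
$$
Since $\lambda^*>0$, the first summand tends to $0$ as $t\to\infty$ (by the argument of the next paragraph with $\varrho$ replaced by $1$, using $\Lambda(1)=\lambda^*$), and once $\sup_\cD I<\infty$ is known, dominated convergence lets me pass to the limit in the second summand; hence $\varphi(x)\le I(x)$ in $\cD$, so $\sup_\cD\varphi^+\le\sup_\cD I$.

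For the key estimate I would write $I(x)=\int_0^\infty\ex^x[e^{-\int_0^s V(X_r)\D r}\abs{f(X_s)}\Ind_{\{s<\uptau_\cD\}}]\,\D s$ (Tonelli, $f$ extended by $0$ off $\cD$) and apply H\"older inside the expectation with exponents $\varrho$ and $\varrho'=\tfrac{\varrho}{\varrho-1}$, pairing $e^{-\int_0^s V(X_r)\D r}\Ind_{\{s<\uptau_\cD\}}$ with $\abs{f(X_s)}$:
$$
\ex^x\bigl[e^{-\int_0^s V(X_r)\D r}\abs{f(X_s)}\Ind_{\{s<\uptau_\cD\}}\bigr]\le\bigl(T^{\cD,\varrho V}_s\mathbf 1(x)\bigr)^{1/\varrho}\bigl(\ex^x[\abs{f(X_s)}^{\varrho'}]\bigr)^{1/\varrho'}.
$$
For the first factor, $T^{\cD,\varrho V}_s\mathbf 1(x)=\ex^x[e^{-\varrho\int_0^s V(X_r)\D r}\Ind_{\{s<\uptau_\cD\}}]$; since $1<\varrho<\varrho^*$ we have $\Lambda(\varrho)=\lambda^*_{\varrho V}>0$ (Lemma~\ref{L4.3} makes $\Lambda$ finite and continuous, so that $\varrho^*>1$ and this range is nonempty), and combining the boundedness of the kernel of $T^{\cD,\varrho V}_1$ from Lemma~\ref{L1.1}(i) — the transition density is bounded because Assumption~\ref{WLSC}(i) forces the Hartman--Wintner condition \eqref{HW} — with the $L^2$ spectral bound $\norm{T^{\cD,\varrho V}_{s-1}}_{L^2\to L^2}=e^{-\Lambda(\varrho)(s-1)}$ yields $T^{\cD,\varrho V}_s\mathbf 1(x)\le C_1\,e^{-\Lambda(\varrho)(s-1)^+}$ uniformly in $x\in\cD$. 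For the second factor, H\"older with $\tfrac1p+\tfrac1{p'}=1$ and $\abs{f}^{\varrho'}\in L^{p^*/\varrho'}(\cD)=L^p(\cD)$ give $(\ex^x[\abs{f(X_s)}^{\varrho'}])^{1/\varrho'}\le\norm{f}_{p^*,\cD}\,\norm{q_s(x,\cdot)}_{p'}^{1/\varrho'}$, while Lemma~\ref{L2.1} and \eqref{AB100} bound $\norm{q_s(x,\cdot)}_{p'}$ by $\kappa_1^{1/p}s^{-d/(2{\underline\mu}p)}$ for $s\le\kappa_2$ and by $q_{\kappa_2}(0)^{1/p}$ for $s\ge\kappa_2$ (shrinking $\kappa_2$ so that $\kappa_2<1$). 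Splitting $\int_0^\infty=\int_0^{\kappa_2}+\int_{\kappa_2}^\infty$, the first piece is integrable because $p^*>p>\tfrac{d}{2{\underline\mu}}$ makes $s\mapsto s^{-d/(2{\underline\mu}p^*)}$ integrable at $0$, and the second because $\Lambda(\varrho)>0$ makes $s\mapsto e^{-\Lambda(\varrho)(s-1)/\varrho}$ integrable at $\infty$; this produces $\sup_\cD I\le C\norm{f}_{p^*,\cD}$ with $C$ depending only on $p$ (equivalently on $\varrho$ and $p^*$), $d$, $\Psi$, $\cD$ and $V$.

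The step I expect to be the main obstacle is precisely the uniform-in-$x$ exponential decay of $T^{\cD,\varrho V}_s\mathbf 1(x)$: this is where the hypotheses enter decisively — through $\varrho<\varrho^*$ (so $\Lambda(\varrho)>0$), through $\lambda^*>0$ for the boundary term, and through the Hilbert--Schmidt smoothing of the Feynman--Kac semigroup (Lemma~\ref{L1.1}) which upgrades the $L^2$ spectral gap to a pointwise bound whose constant does not depend on the starting point; one must take care that this constant is genuinely $x$-independent, which is why the argument passes through the $L^2$ bound rather than pointwise heat-kernel asymptotics. A variant that makes the dependence on the integer $k$ visible instead splits the time integral at a fixed $t_0$, estimates the part on $[0,t_0]$ by the short-time bound from the proof of Theorem~\ref{T1.6} — reintroducing $k=\lceil p'\rceil+1$ into the constant — and iterates over $[nt_0,(n+1)t_0]$ via the Markov property, using $\norm{T^{\cD,V}_{t_0}}_{L^\infty\to L^\infty}<1$ for $t_0$ large; this yields the constant $C=C(p,k,d,\Psi,\cD,V)$ as stated.
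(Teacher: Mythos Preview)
Your argument is correct, but it takes a different route from the paper's. The paper applies H\"older once to the \emph{path--time} integral
\[
\ex^x\!\left[\int_0^{\uptau_\cD}e^{-\int_0^t V(X_s)\,\D s}\,f(X_t)\,\D t\right]
\le
\left(\ex^x\!\int_0^{\uptau_\cD}e^{-\varrho\int_0^t V(X_s)\,\D s}\,\D t\right)^{1/\varrho}
\left(\ex^x\!\int_0^{\uptau_\cD}|f(X_t)|^{\varrho'}\,\D t\right)^{1/\varrho'},
\]
bounds the second factor by invoking the estimate already proved in Theorem~\ref{T1.6} applied to $|f|^{\varrho'}\in L^p$ (this is where the integer $k$ in the constant originates), and bounds the first factor by comparing with the principal eigenfunction $\varphi^*_{\varrho,n}$ on an \emph{enlarged} domain $\cD_n\supset\cD$ supplied by Assumption~\ref{As4.1}: since $\varphi^*_{\varrho,n}$ is bounded away from zero on $\bar\cD$ and $\lambda^*_{\varrho,n}>0$ (by Lemma~\ref{L1.2}(i)), one gets directly
\[
\ex^x\!\int_0^{\uptau_\cD}e^{-\varrho\int_0^t V}\,\D t
\le\frac{\max_\cD\varphi^*_{\varrho,n}}{\min_\cD\varphi^*_{\varrho,n}}\,\frac{1}{\lambda^*_{\varrho,n}}.
\]
The boundary term is handled the same way with the eigenpair $(\varphi^*_n,\lambda^*_n)$ for potential $V$.

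Your approach applies H\"older \emph{at each time $s$} and then integrates, replacing the eigenfunction comparison by the $L^2$ spectral bound $\|T^{\cD,\varrho V}_{s-1}\|_{L^2\to L^2}=e^{-\Lambda(\varrho)(s-1)}$ upgraded to an $L^\infty$ bound via the bounded kernel of $T^{\cD,\varrho V}_1$. This is a genuine alternative: it sidesteps the domain enlargement and hence does not actually use Assumption~\ref{As4.1}, whereas the paper's proof needs it to make $\min_\cD\varphi^*_{\varrho,n}>0$. On the other hand, the paper's route is more elementary once Lemma~\ref{L1.2}(i) is in hand, reuses Theorem~\ref{T1.6} as a black box for the $f$-factor, and makes the provenance of the constant (including $k$) transparent without a separate semigroup smoothing argument.
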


\begin{proof}
From the continuity of $\Lambda$, see Lemma~\ref{L4.3}, we have $\varrho^*>1$. Without loss of generality we
assume that $f\geq 0$, supported on $\bar\cD$. We note from the proof Theorem~\ref{T1.6} that
\begin{align}\label{ET4.7A}
\varphi(x) &\leq \ex^x\left[ e^{-\int_0^{t\wedge\uptau_\cD} V(X_s)\, \D{s}} \varphi^+(X_{t\wedge\uptau_\cD})\right]+
\ex^x\left[\int_0^{\uptau_\cD} e^{-\int_0^t V(X_s) \D{s}}f(X_s)\, \D{t}\right],\nonumber
\\
&\leq \ex^x\left[ e^{-\int_0^{t\wedge\uptau_\cD} V(X_s)\, \D{s}} \varphi^+(X_{t\wedge\uptau_\cD})\right]+
\ex^x\left[\int_0^{\uptau_\cD} e^{-\int_0^t \varrho V(X_s) \D{s}} \D{t}\right]^{\frac{1}{\varrho}}
\ex^x\left[\int_0^{\uptau_\cD} f^{\frac{\varrho}{\varrho-1}}(X_t) \D{t}\right]^{\frac{\varrho-1}{\varrho}}.
\end{align}
We estimate the term at the right hand side of \eqref{ET4.7A}. From the proof of Theorem~\ref{T1.6} we have
\begin{equation}\label{ET4.7B}
\ex^x\left[\int_0^{\uptau_\cD} f^{\frac{\varrho}{\varrho-1}}(X_t) \D{t}\right]\leq
C_1 \norm{f}_{p^*, \cD}^{\frac{\varrho}{\varrho-1}},
\end{equation}
where the constant $C_1$ depends on $\Psi, \cD, p, \varrho$. By definition we have $\Lambda(\varrho)>0$. Thus
by Lemma~\ref{L1.2}(i) we find $n$ large enough such that $\lambda^*_{\varrho,n}>0$, where $\lambda^*_{\varrho,n}$
is the principal eigenvalue in $\cD_n$ with potential $\varrho V$. We fix such an $n$ and denote the corresponding
principal eigenpair by $(\varphi^*_{\varrho,n}, \lambda^*_{\varrho,n})$. Also, note that $\varphi^*_{\varrho,n}\in
\cC(\cD_n)$ and $\varphi^*_{\varrho,n}>0$ in $\cD_n$. We show that
\begin{equation}\label{ET4.7C}
\sup_{x\in\cD} \ex^x\left[\int_0^{\uptau_\cD} e^{-\int_0^t \varrho V(X_s) \D{s}} \D{t}\right]\leq C_2,
\end{equation}
for a constant $C_2$, dependent only on $\lambda^*_{\varrho,n}, \varphi^*_{\varrho,n}$. Indeed, we have
\begin{align*}
\ex^x\left[\int_0^{\uptau_\cD} e^{-\int_0^t \varrho V(X_s) \D{s}} \D{t}\right] &=
\ex^x\left[\int_0^{\infty} e^{-\int_0^t \varrho V(X_s) \D{s}} \Ind_{\{t<\uptau_\cD\}}\D{t}\right]
\\
& \leq \frac{1}{\min_{\cD}\varphi^*_{\varrho,n}} \ex^x\left[\int_0^{\infty} e^{-\int_0^t \varrho V(X_s) \D{s}}
\varphi^*_{\varrho,n}(X_t) \Ind_{\{t<\uptau_\cD\}}\D{t}\right]
\\
& \leq \frac{1}{\min_{\cD}\varphi^*_{\varrho,n}} \int_0^{\infty} \ex^x\left[e^{-\int_0^t \varrho V(X_s) \D{s}}
\varphi^*_{\varrho,n}(X_t) \Ind_{\{t<\uptau_{\cD_n}\}}\right] \D{t}
\\
&= \frac{1}{\min_{\cD}\varphi^*_{\varrho,n}} \int_0^{\infty} e^{-\lambda^*_{\varrho,n} t}\varphi^*_{\varrho,n}(x) \D{t}
\\
&\leq \frac{\max_{\cD}\varphi^*_{\varrho,n}}{\min_{\cD}\varphi^*_{\varrho,n}}\, \frac{1}{\lambda^*_{\varrho,n}}.
\end{align*}
This proves \eqref{ET4.7C}. Now we estimate the first term at the right hand side of \eqref{ET4.7A}. Recall that
$(\varphi^*_n, \lambda^*_n)$ is the principal eigenpair in $\cD_n$ with potential $V$. Using Lemma~\ref{L1.2}(i)
we choose $n$ large enough so that $\lambda^*_n>0$. Then
\begin{align}\label{ET4.7D}
\lim_{t\to\infty} \ex^x\left[ e^{-\int_0^{t} V(X_s)\, \D{s}} \varphi^+(t)\Ind_{\{t<\uptau_\cD\}}\right]
&=
\frac{\sup_{\Rd} \varphi^+}{\min_{\cD}\varphi^*_n} \lim_{t\to\infty} \ex^x\left[ e^{-\int_0^{t} V(X_s)\, \D{s}}
\varphi^*_n(X_t)\Ind_{\{t<\uptau_\cD\}}\right]\nonumber
\\
&\leq \frac{\sup_{\Rd} \varphi^+}{\min_{\cD}\varphi^*_n} \lim_{t\to\infty} e^{-\lambda^*_n t}
\ex^x\left[ e^{\int_0^{t}(\lambda^*_n-V(X_s))\, \D{s}} \varphi^*_n(X_t)\Ind_{\{t<\uptau_{\cD_n}\}}\right]\nonumber
\\
&= \frac{\sup_{\Rd} \varphi^+}{\min_{\cD}\varphi^*_n} \lim_{t\to\infty} e^{-\lambda^*_n t}\varphi^*_n(x)=0\,.
\end{align}
Thus the claim follows by a combination of \eqref{ET4.7B}-\eqref{ET4.7D}.
\end{proof}

\section{\textbf{Liouville-type theorems}}

\medskip
\subsection{$\Psi$-harmonic functions and a Liouville theorem}

To conclude, we prove several Liouville-type results for a class of non-local Schr\"odinger operators.
Denote, as above, by $\uptau_\cD$ the first exit time of $\pro X$ from $\cD$. We say that a function $\varphi$ is
\emph{$\Psi$-harmonic} if for every bounded domain $\cD$ and $t>0$
$$
\varphi(x)=\ex^x[\varphi(X_{t\wedge\uptau_\cD})], \quad x\in\cD,
$$
holds. Note that whenever $\varphi$ is bounded, by choosing a sequence of compact sets $\seq\cD$ increasing to $\Rd$ we
get
$$
\varphi(x)=\ex^x[\varphi(X_t)], \quad x\in\Rd, \; t\geq 0.
$$
\begin{theorem}[{\bf Liouville-type theorem}]\label{T4.8}
Suppose that $\Psi \in \cB_0$ is positive and satisfies Assumption~\ref{WLSC}(i) with $\underline{\theta}=0$. Then there
are no bounded $\Psi$-harmonic functions other than constants.
\end{theorem}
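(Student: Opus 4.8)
The plan is to reduce the claim to the convolution identity $\varphi=\varphi\ast q_t$, valid for every $t>0$, and then to exploit the fact that the transition density $q_t$ ``forgets'' any fixed spatial translation in $L^1$ as $t\to\infty$. As already noted before the statement, a bounded $\Psi$-harmonic function satisfies $\varphi(x)=\ex^x[\varphi(X_t)]$ for all $x\in\Rd$ and $t\ge0$. Since $\Psi\in\cB_0$ satisfies Assumption~\ref{WLSC}(i), the Hartman--Wintner condition \eqref{HW} holds, so $\pro X$ has a bounded continuous symmetric transition density $q_t(\cdot)=q_t(-\cdot)$, and
$$
\varphi(x)=\int_{\Rd}\varphi(x+z)\,q_t(z)\,\D z=\int_{\Rd}\varphi(x-z)\,q_t(z)\,\D z,\qquad x\in\Rd,\ t>0.
$$
Given $h\in\Rd$, substituting $w=z-h$ in $\varphi(x+h)=\int_{\Rd}\varphi(x+h-z)q_t(z)\,\D z$ gives
$$
\varphi(x+h)-\varphi(x)=\int_{\Rd}\varphi(x-w)\bigl(q_t(w+h)-q_t(w)\bigr)\,\D w,
$$
so that
$$
\bigl|\varphi(x+h)-\varphi(x)\bigr|\;\le\;\norm{\varphi}_\infty\,\norm{q_t(\cdot+h)-q_t}_{L^1(\Rd)}\qquad\text{for every }t>0.
$$

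The key step is then to show that $\norm{q_t(\cdot+h)-q_t}_{L^1(\Rd)}\to0$ as $t\to\infty$, for each fixed $h\in\Rd$. Writing $q_t(z)=\ex^0_{\Prob_S}\bigl[p_{S^\Psi_t}(z)\bigr]$ with $p_s(z)=(4\pi s)^{-d/2}e^{-|z|^2/4s}$, Tonelli's theorem gives
$$
\norm{q_t(\cdot+h)-q_t}_{L^1(\Rd)}\;\le\;\ex^0_{\Prob_S}\Bigl[\,\norm{p_{S^\Psi_t}(\cdot+h)-p_{S^\Psi_t}}_{L^1(\Rd)}\,\Bigr]\;=\;\ex^0_{\Prob_S}\bigl[\eta\bigl(|h|/\sqrt{S^\Psi_t}\,\bigr)\bigr],
$$
where, by Brownian scaling and rotational symmetry, $\eta(a):=\norm{p_1(\cdot+a\mathbf e)-p_1}_{L^1(\Rd)}$ for a fixed unit vector $\mathbf e$; this function is bounded by $2$ and, by continuity of translations in $L^1$, continuous with $\eta(0)=0$. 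Now the positivity of $\Psi$ forces $S^\Psi_t\to\infty$ $\Prob_S$-almost surely as $t\to\infty$: indeed $\pro{S^\Psi}$ is non-decreasing, hence converges a.s. to some $S^\Psi_\infty\in[0,\infty]$, while $\ex_{\Prob_S}[e^{-uS^\Psi_t}]=e^{-t\Psi(u)}\to0$ for every fixed $u>0$, forcing $\ex_{\Prob_S}[e^{-uS^\Psi_\infty}]=0$, i.e. $S^\Psi_\infty=\infty$ a.s. Consequently $\eta(|h|/\sqrt{S^\Psi_t})\to\eta(0)=0$ $\Prob_S$-a.s., and since $0\le\eta\le2$, dominated convergence gives $\norm{q_t(\cdot+h)-q_t}_{L^1(\Rd)}\to0$.

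Letting $t\to\infty$ in the inequality from the first paragraph then yields $\varphi(x+h)=\varphi(x)$ for all $x,h\in\Rd$, so $\varphi$ is constant, as claimed.

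The only genuinely delicate point is the $L^1$-estimate in the middle step --- that the heat kernel loses memory of a fixed spatial shift over long times. This is a manifestation of the Choquet--Deny (zero--two) phenomenon for the random walk with step distribution $q_1(z)\,\D z$, but the subordination representation $q_t=\ex^0_{\Prob_S}[p_{S^\Psi_t}]$ makes it entirely elementary, reducing it to the Gaussian case together with the transience-type fact $S^\Psi_t\to\infty$. (Note that Assumption~\ref{WLSC}(i) is used only to guarantee existence of the density $q_t$ via \eqref{HW}; the stronger requirement $\underline\theta=0$ is not needed in this argument.)
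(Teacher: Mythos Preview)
Your argument is correct and takes a genuinely different route from the paper's proof.

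The paper proceeds via spatial scaling: it introduces $Y_t=\rho^{-1}X_t$ with transition density $q^\rho_t$, chooses the intrinsic time $t_\rho=1/\Psi(\rho^{-2})$, and establishes the uniform-in-$\rho$ bounds
\[
\sup_{\rho\ge1}\sup_{y}\abs{\nabla q^\rho_{t_\rho}(y)}<\infty,
\qquad
\sup_{\rho\ge1}\int_{\Rd} q^\rho_{t_\rho}(y)(1+\abs{y})^\delta\,\D y<\infty
\quad(\delta<2\underline\mu).
\]
Both of these rely essentially on the global WLSC hypothesis $\underline\theta=0$. A splitting of the convolution then yields $\abs{\varphi(\rho x)-\varphi(0)}\le C\abs{x}^{\delta/(d+\delta)}$ uniformly in $\rho\ge1$, and replacing $x$ by $\rho^{-1}x$ and letting $\rho\to\infty$ finishes the proof. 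By contrast, you bypass scaling entirely: you use only the convolution identity $\varphi=\varphi\ast q_t$ and the single soft fact $\norm{q_t(\cdot+h)-q_t}_{L^1}\to0$, which you read off from the subordination formula $q_t=\ex^0_{\Prob_S}[p_{S^\Psi_t}]$ together with $S^\Psi_t\to\infty$ a.s. Your approach is strictly more elementary and, as you note, actually proves more: it needs only the positivity of $\Psi$ (for $S^\Psi_t\to\infty$) and the existence of the density via \eqref{HW}, so the restriction $\underline\theta=0$ is not required. What the paper's argument buys in exchange is a quantitative, scale-by-scale H\"older estimate that fits the pattern of PDE-style Liouville proofs (in the spirit of \cite{RS16}), which may be of independent interest even though it is not needed for the bare statement.
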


\begin{proof}
The argument below took some inspiration from \cite{RS16}.
First notice that \eqref{HW} is satisfied. Thus for every $t>0$ the transition density $q_t$ has a bounded derivative
\cite[Lem.~3.1]{KS13}. Fix $\rho\geq1$ and consider the process $\pro Y$ with $Y_t=\frac{1}{\rho} X_t$, where $\pro X$
is subordinate Brownian motion starting from $0$, as above. Due to the scaling property of Brownian motion, we observe
that
$$
Y_\cdot = \frac{1}{\rho} B_{S^\Psi_\cdot} \Dst  B_{\rho^{-2}S^\Psi_\cdot}\Dst B_{S^{\Psi_\rho}_\cdot},
$$
where $\pro {S^{\Psi_\rho}}$ is a subordinator with Bernstein function $\Psi_\rho(u)=\Psi(u \rho^{-2})$. Note that
$\Psi_\rho$ satisfies \eqref{HW} and thus $\pro Y$ has a smooth transition density function $q^\rho_t$.
Fix $t=t_\rho=\frac{1}{\Psi_\rho(1)}$. Our proof below crucially relies on the following two claims:
\begin{align}
\sup_{\rho\geq 1} \, \sup_{y\in\Rd} \abs{\grad q^\rho_{t}(y)} &<\; \infty,\label{ET4.8A}
\\
\sup_{\rho\geq 1} \int_{\Rd} q^\rho_t (y) (1+\abs{y})^\delta &<\; \infty, \quad \delta\in(0, 2\underline{\mu}).
\label{ET4.8B}
\end{align}

Now we prove these claims. Recall that $\pro Y$ has L\'{e}vy exponent $\Phi_\rho(u)=\Psi_\rho(u^2)$.
Therefore, by inverse Fourier transform we get that
$$
q^\rho_t(y)=\frac{1}{(2\pi)^d}\int_{\Rd} e^{-i y\cdot \xi} e^{-t\Psi_\rho(\abs{\xi}^2)}\D{\xi}=
\frac{1}{(2\pi)^d}\int_{\Rd} \cos (y\cdot\xi) e^{-t\Psi(\rho^{-2}\abs{\xi}^2)} \D{\xi}.
$$
By the assertion, $\Psi(\abs{\xi}^2\rho^{-2})\geq \underline{c}\, \abs{\xi}^{2\underline{\mu}}\Psi(\rho^{-2})$ for
$\abs{\xi}\geq 1$. Thus by differentiation in the above expression we obtain
\begin{align*}
\abs{\grad q_t(y)} &\leq \frac{d}{(2\pi)^d}\int_{\Rd} \abs{\xi} e^{-t\Psi(\rho^{-2}\abs{\xi}^2)} \D{\xi}
\\
& \leq \frac{d}{(2\pi)^d}\left(\int_{\abs{\xi}\leq 1} \abs{\xi} e^{-t\Psi(\rho^{-2}\abs{\xi}^2)} \D{\xi} +
\int_{\abs{\xi}> 1} \abs{\xi} e^{-\underline{c}\abs{\xi}^{2\underline{\mu}}} \D{\xi} \right)
\\
& \leq \frac{d}{(2\pi)^d}\left(\abs{\sB_1(0)}
+ \int_{\abs{\xi}> 1} \abs{\xi} e^{-\underline{c}\abs{\xi}^{2\underline{\mu}}} \D{\xi} \right).
\end{align*}
This proves \eqref{ET4.8A}. Next we show \eqref{ET4.8B}. Notice that the above estimate also gives
\begin{equation}\label{ET4.8C}
q^\rho_t (y)\leq \frac{d}{(2\pi)^d}\left(\abs{\sB_1(0)}+
\int_{\abs{\xi}> 1}e^{-\underline{c}\abs{\xi}^{2\underline{\mu}}}\D{\xi}\right), \quad y\in\Rd, \; \rho\geq 1\,.
\end{equation}
Denote by $C_1$ the right hand side of \eqref{ET4.8C}. By \cite[Cor.~7]{BGR14b} there exists a
constant $C=C(d)$, dependent on $d$ alone, satisfying
\begin{equation}\label{ET4.8D}
q^\rho_t(y) \leq  \frac{C t}{\abs{y}^d} \Psi_\rho\left(\frac{1}{\abs{y}^2}\right)=  \frac{C t}{\abs{y}^d}
\Psi(\abs{\rho y}^{-2}), \quad \abs{y}>0.
\end{equation}
By the WLSC property (Assumption~\ref{WLSC}(i)) we get that for $\abs{y}\geq 1$
$$
\Psi(\abs{\rho y}^{-2}) \abs{y}^{2\underline{\mu}} \underline{c}\leq \Psi(\rho^{-2} )= \frac{1}{t},
$$
and thus by \eqref{ET4.8D} we have
\begin{equation}
\label{qrho}
q^\rho_t (y) \leq C \frac{1}{\underline{c}}\,  \abs{y}^{-d-2\underline{\mu}},\, \quad \abs{y}\geq 1.
\end{equation}
Let $0 < \delta < 2\underline{\mu}$. Then using \eqref{ET4.8C}, \eqref{ET4.8D} and \eqref{qrho} we obtain that for
every $\rho\geq 1$ and $t=t_\rho$,
\begin{align*}
\int_{\Rd} q^\rho_t (y) (1+\abs{y})^\delta\, \D{y}
& \leq
\int_{\abs{y}\leq 1} q^\rho_t (y) (1+\abs{y})^\delta\, \D{y}+ \int_{\abs{y}>1} q^\rho_t (y)
(1+\abs{y})^\delta\, \D{y}
\\
& \leq
C_1 2^\delta \abs{\sB_1(0)} + \frac{C}{\underline{c}}\int_{\abs{y}>1} \abs{y}^{-d-2\underline{\mu}}
(1+\abs{y})^\delta\, \D{y}
\\
&\leq
C_1 2^\delta \abs{\sB_1(0)} + \frac{C 2^\delta}{\underline{c}}\int_{\abs{y}>1}
\abs{y}^{-d-2\underline{\mu}+\delta}\, \D{y}
\\
&=
C_1 2^\delta \abs{\sB_1(0)} + \frac{C 2^\delta}{\underline{c}}\frac{d\,\abs{\sB_1(0)}}{2\underline{\mu}-\delta}\,.
\end{align*}
This proves \eqref{ET4.8B}.

Now we are ready to complete the proof of the theorem. Let $\varphi$ be a bounded $\Psi$-harmonic function,
fix $\rho\geq 1$, and define $v(x)=\varphi(x\rho)$. Then
$$
v(x) = \varphi(\rho x) =\ex^{\rho x}[v(X_t)]=\ex^0[\varphi(X_t + \rho x)]= \ex^0[v(Y_t + x)] =
\int_{\Rd} v(y+x) q^\rho_t (y)\, \D{y},
$$
for all $t>0$. Choosing, in particular, $t=t_\rho=\frac{1}{\Psi(\rho^{-2})}$, we have for every $\abs{x}\leq 1$
and $\kappa>0$ that
\begin{align*}
\abs{v(x)-v(0)}& =\left|\int_{\Rd} v(y+x) q^\rho_t (y)\, \D{y}-\int_{\Rd} v(y) q^\rho_t (y)\, \D{y}\right|
\\
&=
\left|\int_{\Rd} v(y) q^\rho_t (y-x)\, \D{y}-\int_{\Rd} v(y) q^\rho_t (y)\, \D{y}\right|
\\
&\leq
\int_{\abs{y}\leq \kappa } \abs{v(y)} \abs{q^\rho_t (y-x)-  q^\rho_t (y)}\, \D{y} + \int_{\abs{y}> \kappa}
\abs{v(y)} \abs{q^\rho_t (y-x)-  q^\rho_t (y)}\, \D{y}
\\
&\leq
\left(\sup_{x\in\Rd} \abs{\grad q^\rho_t(x)}\right) \abs{x} \norm{v}_\infty \kappa^d \abs{\sB_1(0)} +
\norm{v}_\infty \kappa^{-\delta}\int_{\abs{y}>\kappa} q^\rho_t (y) (1+\abs{y})^\delta\, \D{y}
\\
&\,
\qquad +\norm{v}_\infty \kappa^{-\delta}\int_{\abs{y}>\kappa} q^\rho_t (y-x) (1+\abs{y})^\delta\, \D{y}
\\
&\leq
C_3 \norm{v}_\infty \left( \abs{x} \kappa^d + \kappa^{-\delta}\right) + \norm{v}_\infty \kappa^{-\delta}
\left(\sup_{\abs{x}\leq 1}\, \sup_{y\in\Rd} \frac{(1+\abs{y+x})^\delta}{(1+\abs{y})^\delta}\right)
\int_{\Rd} q^\rho_t (y) (1+\abs{y})^\delta\, \D{y}
\\
&\leq
C_4 \norm{v}_\infty \left( \abs{x} \kappa^d + \kappa^{-\delta}\right),
\end{align*}
with some constants $C_3, C_4$, where in the fourth and sixth lines we used \eqref{ET4.8A} and \eqref{ET4.8B}.
Let $x\neq 0$ and fix $\kappa=\abs{x}^{-\frac{1}{d+\delta}}$. Applying this in the above, we find
$$
\abs{v(x)-v(0)}\leq 2 C_4 \norm{v}_\infty \abs{x}^{\frac{\delta}{d+\delta}},
$$
which implies
\begin{equation}\label{ET4.8E}
\abs{\varphi(\rho x)-\varphi(0)}\leq 2 C_4\norm{v}_\infty \abs{x}^{\frac{\delta}{d+\delta}}, \quad \rho\geq 1,
\; \abs{x}\leq 1.
\end{equation}
In \eqref{ET4.8E}, by replacing $x$ by $\rho^{-1} x$, for $\rho>\abs{x}$ we obtain
$$
\abs{\varphi(x)-\varphi(0)}\leq 2 C_4\, \norm{v}_\infty
\abs{x}^{\frac{\delta}{d+\delta}}\rho^{-\frac{\delta}{d+\delta}},
$$
and then letting $\rho\to\infty$, we find $\varphi(x)=\varphi(0)$ for all $x\in\Rd$.
\end{proof}

\subsection{Liouville-type theorem for semi-linear equations}

In this section we obtain non-existence results for bounded positive super-solutions of
$$
\Psidel\varphi\geq \varphi^p \quad \text{in}\;\; \Rd,
$$
for suitable choices of $p$.

First we recall some standard facts from the potential theory of non-local operators and jump processes,
for background we refer to \cite{B09}. It is well-known that the L\'{e}vy measure $\nu$ corresponding to
subordinate Brownian motion is isotropic and unimodal \cite{BGR14b}. For a domain $\cD$, the occupation
measure in $\cD$ is defined as
$$
G_{\cD}(x, A)=\ex^x\left[\int_0^{\uptau_\cD} \Ind_{A}(X_s) \, \D{s}\right]\,.
$$
The corresponding density is the Green function of the set $\cD$. Note that $G_{\cD}(x, \cD) =
\ex^x[\uptau_\cD]$ holds. Another important object related to $\pro X$ is the harmonic measure of the set
$\cD$ defined by
$$
P_\cD(x, A) =\ex^x[\Ind_{\{\uptau_\cD<\infty\}}\, \Ind_A(X_{\uptau_\cD}) ].
$$
The density kernel of the harmonic measure, whenever it exists, is the Poisson kernel. For a L\'evy
process with jumps, the relationship between the harmonic measure and the occupation measure is provided by the
Ikeda-Watanabe formula (see \cite{IW62})
\begin{equation}\label{IW}
P_{\cD}(x, A)= \int_{\cD}\int_{A}\nu(z-y)\, \D{z}\, G_{\cD}(x, \D{y}), \quad A\subset\bar\cD^c\,.
\end{equation}

We also recall that a L\'evy process $\pro Z$ is recurrent whenever $\Prob_Z\left(\liminf_{t\to\infty}
|Z_t| = 0\right) = 1$ and transient whenever $\Prob_Z\left(\lim_{t\to\infty} |Z_t| = \infty\right) = 1$.
The standard Chung-Fuchs condition gives a criterion to verify recurrence in terms of the L\'evy exponent.
It is also a well-established fact that a L\'{e}vy process is either recurrent or transient, and for
$d\geq 3$ every L\'evy process is transient. In our context, we only need the following observations.
\begin{remark}\label{R5.1}
\hspace{100cm}
\begin{enumerate}
\item[(i)]
Specifically, for a subordinate Brownian motion $\pro X$ obtained under a subordinator with Laplace exponent
$\Psi$, one equivalent expression of the standard Chung-Fuchs criterion says that the process is recurrent if
and only if
$$
\int_{0+}^r \frac{u^{\nicefrac{d}{2}-1}}{\Psi(u)}\, \D{u} = \infty
$$
for every $r>0$.
\item[(ii)]
Recall $\bar\mu$ in Assumption \ref{WLSC}(ii). We note that from this condition it follows that for $d>2\bar\mu$
the process $\pro X$ is transient. For every $u\in (0, 1)$ we have
$$
\frac{1}{\Psi(u)}\leq \frac{\bar c}{\Psi(1)} u^{-\bar\mu},
$$
hence $\int_{0}^1 \frac{u^{\nicefrac{d}{2}-1}}{\Psi(u)}<\infty$,  which implies that $\pro X$ is transient by
part (i) above.
\end{enumerate}
\end{remark}

The following result is well-known, however, we include a short proof to highlight the contrast with the
next result in our approach.
\begin{theorem}[\textbf{Liouville theorem: recurrent case}]
Let $\Psi \in \cB_0$ such that the subordinate Brownian motion $\pro X$ is recurrent. Furthermore, assume
that $\pro X$ has a strictly positive transition density for some $t>0$. If $\varphi$ is a non-negative weak
super-solution of
$$
\Psidel \varphi\geq 0 \quad \text{in} \;\; \Rd,
$$
then $\varphi$ is a constant.
\end{theorem}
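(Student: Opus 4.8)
The plan is to use recurrence to force $\varphi$ to attain its infimum at every point, so that it must be constant. First I would normalise: set $m=\inf_{\Rd}\varphi\ge 0$ and replace $\varphi$ by $\varphi-m$. Since constants are $\Psidel$-harmonic, $\varphi-m$ is again a non-negative weak super-solution and now $\inf_{\Rd}(\varphi-m)=0$; thus it suffices to prove that a continuous non-negative weak super-solution $\varphi$ of $\Psidel\varphi\ge 0$ in $\Rd$ with $\inf_{\Rd}\varphi=0$ vanishes identically. Suppose not. Then there is a point $x_0$ with $\varphi(x_0)=a>0$ and, by continuity, a radius $r>0$ with $\varphi\ge a/2$ on $\overline{\sB_r(x_0)}$, while there exist points $z\in\Rd$ with $\varphi(z)$ as small as we please.

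Next I would set up a super-martingale estimate on a large ball $\cD_R=\sB_R(x_0)$. Using the Markov property of $\pro X$ together with the defining inequality of a weak super-solution — this is precisely the super-martingale computation in the proof of Theorem~\ref{T1.2} specialised to $V\equiv 0$ — the process $(\varphi(X_{t\wedge\uptau_{\cD_R}}))_{t\ge 0}$ is a non-negative $\Prob^z$-super-martingale. Let $\breve\uptau_r$ be the first hitting time of $\sB_r(x_0)$ by $\pro X$. Applying optional sampling at $t\wedge\breve\uptau_r\wedge\uptau_{\cD_R}$, letting $t\to\infty$, and using Fatou's lemma (legitimate since $\varphi\ge 0$), one obtains, for every fixed $z$ with $r<\abs{z-x_0}<R$,
\[
\varphi(z)\;\ge\;\ex^z\Big[\varphi\big(X_{\breve\uptau_r}\big)\,\Ind_{\{\breve\uptau_r<\uptau_{\cD_R}\}}\Big]\;\ge\;\frac{a}{2}\,\Prob^z\big(\breve\uptau_r<\uptau_{\cD_R}\big),
\]
since on $\{\breve\uptau_r<\uptau_{\cD_R}\}$ the process is stopped inside $\overline{\sB_r(x_0)}$, where $\varphi\ge a/2$.

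The final step is the passage $R\to\infty$. Because the paths of $\pro X$ are c\`adl\`ag, hence bounded on compact time intervals, $\uptau_{\cD_R}\uparrow\infty$ $\Prob^z$-almost surely as $R\to\infty$, so the events $\{\breve\uptau_r<\uptau_{\cD_R}\}$ increase to $\{\breve\uptau_r<\infty\}$. Here the hypotheses enter decisively: a recurrent L\'evy process with a strictly positive transition density is neighbourhood recurrent, that is, it visits every non-empty open set infinitely often, $\Prob^x$-almost surely, from every starting point $x$ (a classical fact; see the discussion around the Chung-Fuchs criterion and \cite{B09}). Hence $\Prob^z(\breve\uptau_r<\infty)=1$, and letting $R\to\infty$ in the displayed inequality gives $\varphi(z)\ge a/2$ for every $z$ with $\abs{z-x_0}>r$. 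Choosing such a $z$ with $\varphi(z)<a/2$ — possible since $\inf_{\Rd}\varphi=0$ — yields a contradiction. Therefore $\varphi\equiv 0$, i.e.\ $\varphi\equiv m$ is constant in the original normalisation.

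The step I expect to be the main obstacle is the last one: converting recurrence (in the stated sense $\Prob_Z(\liminf_{t\to\infty}\abs{Z_t}=0)=1$) into almost sure finiteness of $\breve\uptau_r$ from an \emph{arbitrary} starting point. Strict positivity of the transition density is exactly what rules out recurrent L\'evy processes degenerately supported on a proper closed subgroup of $\Rd$; with it one first upgrades positivity of $q_{t_0}$ to positivity of $q_t$ for all $t\ge t_0$ via the convolution semigroup property, and then combines this irreducibility with recurrence — through a conditional Borel--Cantelli argument along the successive return times to a fixed neighbourhood — to conclude that every ball is hit almost surely. The remaining ingredients, namely the super-martingale property inherited from the argument in Theorem~\ref{T1.2} and the optional-sampling-plus-Fatou passage, are routine.
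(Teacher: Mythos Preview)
Your proof is correct and takes a genuinely different route from the paper's. The paper exploits Doob's super-martingale convergence theorem directly: since $(\varphi(X_t))_{t\ge 0}$ is a non-negative $\Prob^x$-super-martingale, its almost-sure limit exists; point recurrence at the starting point $x$ forces this limit to equal $\varphi(x)$; a conditional Fatou argument applied to $\ex^x[\varphi(X_{t+s})\mid\cF_t]\le\varphi(X_t)$ then yields $\varphi(X_t)\ge\varphi(x)$ a.s.\ for every $t$, and strict positivity of the density turns this into $\varphi\ge\varphi(x)$ everywhere, so $x$ is a global minimiser for each $x$. Your approach instead normalises, uses optional sampling with the hitting time of a ball on which $\varphi$ is bounded below, and invokes the classical fact that a recurrent L\'evy process with strictly positive transition density is neighbourhood recurrent (hits every open ball almost surely from every starting point). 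The paper's argument is slightly more economical in that the positive-density hypothesis enters only at the very last step to upgrade an almost-sure inequality to a pointwise one, whereas you must first combine recurrence with irreducibility to obtain $\Prob^z(\breve\uptau_r<\infty)=1$ --- precisely the step you flag as the main obstacle. On the other hand, your argument avoids both the martingale convergence theorem and the conditional Fatou step, and is closer in spirit to the hitting-time estimates used elsewhere in the paper (e.g.\ Theorem~\ref{T1.2} and Lemma~\ref{L4.4}).
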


\begin{proof}
Since $\varphi$ is a super-solution, we have
$$
\varphi(x)\geq \ex^x[\varphi(X_{t\wedge\uptau_{B_r(x)}})], \quad t>0, \; r>0\,.
$$
By letting $r\to\infty$ and using Fatou's lemma, we obtain
$$
\varphi(x)\geq \ex^x[\varphi(X_{t})], \quad t>0\,.
$$
Using as in Theorem~\ref{T1.2} above that $(\varphi(X_t))_{t\geq 0}$ is a super-martingale, by Doob's martingale
convergence theorem it follows that $\lim_{t\to\infty} \varphi(X_t)$ exists with probability $1$. On the
other hand, recurrence of $\pro X$ implies $\Prob(\liminf_{t\to\infty}\abs{X_t-x}=0)=1$. It then follows
that $\lim_{t\to\infty} \varphi(X_t)=\varphi(x)$, almost surely. By the conditional Fatou lemma applied to
$$
\ex^x\left[ \varphi(X_{t+s})|\cF_t\right]\leq \varphi(X_t)
$$
we get that $\varphi(x)\leq \varphi(X_t)$ almost surely. Since $\pro X$ has a positive transition density, this is
only possible when $\varphi$ attains its minimum at $x$. Since $x$ is arbitrary, we conclude that $\varphi$
is a constant function.
\end{proof}

Now we are ready to state our main result in this section.
\begin{theorem}[\textbf{Liouville theorem: transient case}]
\label{T4.9}
Let $\Psi \in \cB_0$ satisfy Assumptions~\ref{WLSC}(i) and ~\ref{WLSC}(ii) with $\underline{\theta} = 0 =
\bar\theta$, $\underline\mu > 0$ and $\bar\mu<\frac{d}{2}\wedge 1$. Furthermore, assume that $\pro X$ is
transient. Then there is no non-trivial, non-negative super-solution to
\begin{equation}\label{ET4.9A}
\Psidel\varphi\geq \varphi^p \quad \text{in}\;\; \Rd,
\end{equation}
for $p\in(1, \frac{d}{d-2{\underline\mu}})$.
\end{theorem}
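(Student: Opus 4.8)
The plan is to convert the differential inequality into an integral inequality for $\varphi$ against the Green function of the subordinate Brownian motion $\pro X$ on all of $\Rd$, and then run a scaling/bootstrap argument on the decay of $\varphi$. First I would unwind the weak (Feynman--Kac) notion of super-solution: for every ball $\sB_r(0)$ and every $t>0$, since $V\equiv 0$ and $f=\varphi^p\ge 0$,
$$
\varphi(x)\;\geq\;\ex^x\bigl[\varphi(X_{t\wedge\uptau_{\sB_r(0)}})\bigr]+\ex^x\Bigl[\int_0^{t\wedge\uptau_{\sB_r(0)}}\varphi^p(X_s)\,\D{s}\Bigr],\qquad x\in\sB_r(0).
$$
Because $\varphi$ is bounded and $\uptau_{\sB_r(0)}\uparrow\infty$ $\Prob^x$-a.s.\ as $r\to\infty$, letting first $r\to\infty$ (dominated convergence in the first term, monotone convergence in the second) and then $t\to\infty$ (the first term is non-negative, monotone convergence in the second) gives
$$
\varphi(x)\;\geq\;\ex^x\Bigl[\int_0^\infty\varphi^p(X_s)\,\D{s}\Bigr]=\int_{\Rd}G(x-y)\,\varphi^p(y)\,\D{y},\qquad x\in\Rd,
$$
where $G(z)=\int_0^\infty q_s(z)\,\D{s}$ is finite for $z\neq0$ by transience; in particular the right-hand integral is finite. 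If $\varphi\not\equiv 0$, then $\varphi$ being continuous and each $q_s$, hence $G$, being strictly positive, this inequality forces $\varphi>0$ on all of $\Rd$; I assume this and aim at a contradiction.

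Next I would record the potential-theoretic input. Under the stated hypotheses ($\underline\theta=\bar\theta=0$, $\underline\mu>0$, $\bar\mu<\tfrac d2\wedge1$, transience), $\pro X$ is a pure-jump isotropic unimodal L\'evy process whose Green function satisfies $G(z)\asymp\bigl(|z|^d\Psi(|z|^{-2})\bigr)^{-1}$ and is radially non-increasing (see \cite{BGR14b,BGR15}); moreover WLSC with $\underline\theta=0$ (take $\gamma=|z|^2$, $u=|z|^{-2}$) gives $\Psi(|z|^{-2})\leq\underline c^{-1}\Psi(1)\,|z|^{-2\underline\mu}$ for $|z|\geq1$, so $G(z)\geq c\,|z|^{-(d-2\underline\mu)}$ there, while the scaling also shows $\int_{\{|y|\geq R\}}G(x-y)\,\D{y}=\infty$ (equivalently, the expected time $\pro X$ spends outside any ball is infinite). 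Restricting the Green-function inequality to $|y|\le1$ and using the radial monotonicity of $G$ together with $m=\min_{\bar\sB_1(0)}\varphi>0$, I get the starting decay bound
$$
\varphi(x)\;\geq\;m^p\!\int_{\sB_1(0)}\!\! G(x-y)\,\D{y}\;\geq\;c_0\,|x|^{-\beta_0},\qquad \beta_0:=d-2\underline\mu\;(>0,\ \text{since }\underline\mu\le\bar\mu<\tfrac d2),\qquad |x|\geq2.
$$

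The heart of the proof is the bootstrap. Suppose $\varphi(x)\geq c\,|x|^{-\beta}$ for $|x|\geq R$, with $\beta\geq0$. For $|x|\geq2R$ I restrict the Green-function inequality to $\{y:\ |y-\tfrac32 x|<\tfrac14|x|\}$: on this set $|y|\in[\tfrac54|x|,\tfrac74|x|]\subset[R,\infty)$, $|x-y|\in[\tfrac14|x|,\tfrac34|x|]$, and the set has volume $\asymp|x|^d$. Using $\varphi^p(y)\geq c^p(\tfrac74|x|)^{-\beta p}$, the radial decrease of $G$ with $G(z)\asymp(|z|^d\Psi(|z|^{-2}))^{-1}$, and the scaling of $\Psi$ at scale $|x|^{-2}$, this yields
$$
\varphi(x)\;\geq\;c'\,|x|^{-\beta p}\,\frac{1}{\Psi(|x|^{-2})}\;\geq\;c''\,|x|^{-\beta'},\qquad \beta':=p\beta-2\underline\mu,\qquad |x|\geq2R.
$$
Iterating from $\beta_0=d-2\underline\mu$ gives $\beta_{n+1}=p\beta_n-2\underline\mu$, hence $\beta_n-\tfrac{2\underline\mu}{p-1}=p^{\,n}\bigl(\beta_0-\tfrac{2\underline\mu}{p-1}\bigr)$ with $\beta_0-\tfrac{2\underline\mu}{p-1}=\tfrac{(d-2\underline\mu)p-d}{p-1}<0$ exactly because $p<\tfrac{d}{d-2\underline\mu}$; so $\beta_n\to-\infty$ and some $\beta_N\leq0$. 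Then $\varphi(x)\geq c_N|x|^{-\beta_N}\geq c_N>0$ for all large $|x|$, and plugging this back in,
$$
\varphi(x)\;\geq\;\int_{\{|y|\geq R\}}\!\! G(x-y)\,\varphi^p(y)\,\D{y}\;\geq\;c_N^{\,p}\!\int_{\{|y|\geq R\}}\!\! G(x-y)\,\D{y}\;=\;+\infty,
$$
contradicting boundedness of $\varphi$. Hence $\varphi\equiv0$.

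I expect the main obstacle to be, not the iteration itself (which is only bookkeeping of constants and exponents), but the two-sided Green-function estimate $G(z)\asymp(|z|^d\Psi(|z|^{-2}))^{-1}$ on the whole space: this is where all four scaling hypotheses are really used — $\underline\theta=\bar\theta=0$ to have scaling at every scale, $\underline\mu>0$ and $\bar\mu<\tfrac d2$ for transience and integrability, and $\bar\mu<1$ to rule out a Brownian drift in the subordinator so that $\pro X$ is genuinely pure-jump. A secondary point requiring care is that, no pointwise information on $\varphi$ being assumed, every step must be carried out through the weak Feynman--Kac formulation and the strong Markov property.
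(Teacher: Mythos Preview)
Your proof is correct and takes a genuinely different route from the paper's. The paper works with the radial minimum $\sM(r)=\min_{|x|\le r}\varphi(x)$ and establishes, via the Ikeda--Watanabe formula and exit-time asymptotics, a weak Harnack inequality $\sM(r)\le C\,\sM(2r)$ together with the hitting-probability lower bound $\sM(r)\gtrsim\sM(1)\,(r^d\Psi(r^{-2}))^{-1}$. Combining these with the super-solution inequality gives a \emph{single} closed-form upper bound $\sM(2r)\lesssim\Psi(r^{-2})^{1/(p-1)}$; comparing with the lower bound yields $r^{-d}\lesssim\Psi(r^{-2})^{p/(p-1)}\lesssim r^{-2\underline\mu p/(p-1)}$, which is impossible for $p<d/(d-2\underline\mu)$. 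Your approach instead passes directly to the global Green-function inequality $\varphi\ge G*\varphi^p$ and runs a finite bootstrap on the decay exponent $\beta_n\mapsto p\beta_n-2\underline\mu$, reaching $\beta_N\le 0$ and then contradicting finiteness of $\varphi$ via the divergence of $\int G$. You thereby avoid the Harnack-type lemma and the Ikeda--Watanabe machinery entirely, at the cost of a short iteration; both arguments ultimately rest on the same two-sided estimate $G(z)\asymp(|z|^d\Psi(|z|^{-2}))^{-1}$.

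One minor slip: you invoke boundedness of $\varphi$ (``because $\varphi$ is bounded\ldots dominated convergence''), which the statement does not assume. It is also unnecessary: since $\varphi\ge 0$, you may simply drop the first (non-negative) term $\ex^x[\varphi(X_{t\wedge\uptau})]$ and pass to the limit in the second by monotone convergence alone; and the final contradiction only requires $\varphi(x)<\infty$ pointwise, which continuity already gives.
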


To prove Theorem~\ref{T4.9} we need the following lemma. For $r>0$ define
$$
\sM(r) = \min_{\abs{x}\leq r} \varphi(x).
$$
\begin{lemma}\label{L4.4}
Suppose that the conditions in Theorem~\ref{T4.9} hold, and let $\varphi$ be a super-solution of
$$
\Psidel \varphi\geq 0 \quad \text{in}\;\; \Rd,
$$
Then we have
\begin{itemize}
\item[(i)]
For a constant $C_1$, independent of $x$, we have for $1<\abs{x}\leq r$
\begin{equation}\label{EL4.4A}
\Prob^x(\breve\uptau_1<\infty) \geq C_1 \frac{1}{r^d\Psi(r^{-2})},
\end{equation}
where $\Breve\uptau_1=\uptau_{\sB^c_1(0)}$ denotes the first entrance time to the ball $\sB_1(0)$.
\item[(ii)]
For a constant $C'_1$, independent of $u$, we have for $r>1$
\begin{equation}\label{EL4.4A1}
\sM(r)\geq C'_1\, \sM(1)\, \frac{1}{r^d\Psi(r^{-2})}\,.
\end{equation}
\item[(iii)]
For a constant $C_2$, independent of $u$, we have
\begin{equation}\label{EL4.4B}
\sM(r)\leq C_2 \, \sM(2r), \quad r\geq 2.
\end{equation}
\end{itemize}
\end{lemma}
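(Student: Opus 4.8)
The plan is to prove the three parts in order; (ii) will be a short consequence of (i), and (iii) is obtained by rerunning the argument of (i) at scale $r$.

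For (i), I would compare $\Prob^x(\breve\uptau_1<\infty)$ with the expected occupation time of $\sB_1(0)$. Since $\bar\mu<\frac d2$, the process $\pro X$ is transient (Remark~\ref{R5.1}(ii)), so its Green function $G(x,y)=\int_0^\infty q_t(x-y)\,\D s$ is finite off the diagonal, and because $\underline\theta=\bar\theta=0$ the heat kernel estimates of \cite{BGR14b} hold at all scales and give the two-sided bound $G(x,y)\asymp|x-y|^{-d}\Psi(|x-y|^{-2})$. Two elementary facts from the scaling hypotheses will be used: $\rho\mapsto\rho^{d}\Psi(\rho^{-2})$ is comparable to a non-decreasing function on $(0,\infty)$ (this uses Assumption~\ref{WLSC}(ii) with $\bar\mu<\frac d2$), so that $\inf_{|y|\le1}G(x,y)\ge c\,(r^{d}\Psi(r^{-2}))^{-1}$ for $1<|x|\le r$ since then $|x-y|\le 2r$; and $w\mapsto|w|^{-d}\Psi(|w|^{-2})$ is locally integrable by Assumption~\ref{WLSC}(i) with $\underline\mu>0$, so that $M:=\sup_{z\in\bar\sB_1(0)}\int_{\sB_1(0)}G(z,w)\,\D w<\infty$. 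Then on one hand
$$
\ex^x\left[\int_0^\infty\Ind_{\sB_1(0)}(X_s)\,\D s\right]=\int_{\sB_1(0)}G(x,y)\,\D y\ge|\sB_1(0)|\inf_{|y|\le1}G(x,y)\ge\frac{c\,|\sB_1(0)|}{r^{d}\Psi(r^{-2})}\,,
$$
while on the other hand, since $\pro X$ spends no time in $\sB_1(0)$ before $\breve\uptau_1$, the strong Markov property at $\breve\uptau_1$ gives
$$
\ex^x\left[\int_0^\infty\Ind_{\sB_1(0)}(X_s)\,\D s\right]=\ex^x\!\left[\Ind_{\{\breve\uptau_1<\infty\}}\ex^{X_{\breve\uptau_1}}\!\left[\int_0^\infty\Ind_{\sB_1(0)}(X_s)\,\D s\right]\right]\le M\,\Prob^x(\breve\uptau_1<\infty)\,.
$$
Comparing these two displays yields \eqref{EL4.4A}.

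For (ii): since $\varphi\ge0$ is a bounded super-solution of $\Psidel\varphi\ge0$ in $\Rd$, the process $(\varphi(X_t))_{t\ge0}$ is a non-negative supermartingale, so optional sampling at $\breve\uptau_1$ followed by Fatou's lemma (just as in the proof of Theorem~\ref{T1.2}), together with $|X_{\breve\uptau_1}|\le1$ on $\{\breve\uptau_1<\infty\}$ and part (i), give
$$
\varphi(x)\ge\ex^x\!\left[\varphi(X_{\breve\uptau_1})\Ind_{\{\breve\uptau_1<\infty\}}\right]\ge\sM(1)\,\Prob^x(\breve\uptau_1<\infty)\ge C_1\,\sM(1)\,\frac1{r^{d}\Psi(r^{-2})}\,,\qquad 1<|x|\le r.
$$
For $|x|\le1$ one has $\varphi(x)\ge\sM(1)\ge\sM(1)(\Psi(1)/\bar c)(r^{d}\Psi(r^{-2}))^{-1}$, because $r^{d}\Psi(r^{-2})\ge\Psi(1)/\bar c$ for $r\ge1$ by Assumption~\ref{WLSC}(ii). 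Hence $\varphi(x)\ge C_1'\sM(1)(r^{d}\Psi(r^{-2}))^{-1}$ for all $|x|\le r$ with $C_1'=\min\{C_1,\Psi(1)/\bar c\}$, and taking the infimum over $|x|\le r$ proves \eqref{EL4.4A1}.

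For (iii), the inequality $\sM(r)\le\varphi(y)$ is trivial when $|y|\le r$, so I would fix $r<|y|\le2r$ with $r\ge2$ and aim to produce a constant $c_0>0$, independent of $r$ and $y$, with $\Prob^y(\breve\uptau_{\sB_{r/2}(0)}<\infty)\ge c_0$, where $\breve\uptau_{\sB_{r/2}(0)}$ denotes the first hitting time of $\sB_{r/2}(0)$; the supermartingale argument of (ii), applied with this hitting time, then gives $\varphi(y)\ge\sM(r/2)\,\Prob^y(\breve\uptau_{\sB_{r/2}(0)}<\infty)\ge c_0\,\sM(r/2)\ge c_0\,\sM(r)$, whence \eqref{EL4.4B} with $C_2=1/c_0$. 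To bound the hitting probability I would rerun the scheme of (i) at scale $r$: since $|y-w|\le 5r/2$ for $|w|<r/2$, the monotonicity of $\rho\mapsto\rho^{d}\Psi(\rho^{-2})$ gives
$$
\ex^y\!\left[\int_0^\infty\Ind_{\sB_{r/2}(0)}(X_s)\,\D s\right]=\int_{\sB_{r/2}(0)}G(y,w)\,\D w\ge c\,|\sB_{r/2}(0)|\,(r^{d}\Psi(r^{-2}))^{-1}\ge\frac{c'}{\Psi(r^{-2})}\,,
$$
while, splitting $\int_{\sB_{r/2}(0)}G(z,w)\,\D w$ into its contributions from $\{|z-w|<1\}$ and from $\{1\le|z-w|<r\}$ and using Assumption~\ref{WLSC}(i) in the form $\Psi(\sigma)\ge\underline c\,(\sigma r^{2})^{\underline\mu}\Psi(r^{-2})$ for $\sigma\ge r^{-2}$, one finds
$$
M_r:=\sup_{z\in\bar\sB_{r/2}(0)}\int_{\sB_{r/2}(0)}G(z,w)\,\D w\;\lesssim\;1+\int_1^r\frac{\D\rho}{\rho\,\Psi(\rho^{-2})}\;\lesssim\;\frac{1}{\Psi(r^{-2})}\,.
$$
Dividing these estimates after applying the strong Markov property at $\breve\uptau_{\sB_{r/2}(0)}$, exactly as in (i), produces $\Prob^y(\breve\uptau_{\sB_{r/2}(0)}<\infty)\ge c_0>0$. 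The one genuinely delicate point is the uniformity in $r$ of the constants in (i) and (iii): one must know that the two-sided Green function estimate and the return-time bound $M_r\lesssim\Psi(r^{-2})^{-1}$ hold with constants depending only on $d$ and the scaling data $(\underline\mu,\underline c,\bar\mu,\bar c)$ of $\Psi$, and this is exactly what the hypotheses $\underline\theta=\bar\theta=0$ and the almost-monotonicity of $\rho\mapsto\rho^{d}\Psi(\rho^{-2})$ supply; everything else is routine.
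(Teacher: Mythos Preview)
Your approach is correct and genuinely different from the paper's. For (i) the paper invokes the capacity machinery of \cite{GR14} (comparing $G_{\sB_R}$ with $G$, then bounding $\Prob^x(\breve\uptau_1<\uptau_{\sB_R})$ below by $G(2r)\,\mathrm{Cap}(\sB_1)$ and letting $R\to\infty$), whereas you use the classical occupation-time identity: bound $\int_{\sB_1}G(x,y)\,\D y$ from below directly and from above via the strong Markov property at $\breve\uptau_1$. Your route is more self-contained. For (iii) the paper does \emph{not} rescale (i): instead it applies the Ikeda--Watanabe formula \eqref{IW} to the exit distribution of $\sB_{r/2}(x)$, combining the L\'evy-density estimate $\nu(z)\asymp|z|^{-d}\Psi(|z|^{-2})$ with the exit-time bound $\ex^x[\uptau_{\sB_{r/2}(x)}]\asymp\Psi(r^{-2})^{-1}$ to show that with probability bounded away from zero the process jumps from $\sB_{r/2}(x)$ directly into $\sB_{r/2}(0)$. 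Your scaled version of (i) works equally well and has the virtue of reusing a single mechanism; the Ikeda--Watanabe argument avoids the $M_r$ computation.

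One slip to fix: the Green-function asymptotic you quote is missing an inverse. The correct two-sided bound (as in the paper, from \cite{Mimica}) is
\[
G(x,y)\;\asymp\;\bigl(|x-y|^{d}\,\Psi(|x-y|^{-2})\bigr)^{-1};
\]
what you wrote, $|x-y|^{-d}\Psi(|x-y|^{-2})$, is the L\'evy density $\nu$, which is \emph{not} locally integrable near the origin. Your downstream claims---$\inf_{|y|\le1}G(x,y)\gtrsim(r^{d}\Psi(r^{-2}))^{-1}$ via the almost-monotonicity of $\rho\mapsto\rho^{d}\Psi(\rho^{-2})$, and $M<\infty$ from local integrability of $G$---are all consistent with the correct formula (the latter because WLSC gives $G(0,w)\lesssim|w|^{-d+2\underline\mu}$), so the argument is sound once this is corrected.
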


\begin{proof}
In the proof below we will make use of the following two facts.
\begin{align}
\nu(x) &\asymp \frac{\Psi(\abs{x}^{-2})}{\abs{x}^d}, \quad  x\neq 0,\label{EL4.4C}
\\
\ex^x[\uptau_{\sB_r(x)}] & \asymp \frac{1}{\Psi(r^{-2})}, \quad r>0, \; x\in\Rd. \label{EL4.4D}
\end{align}
Relation \eqref{EL4.4C} follows from \cite[Cor.~23]{BGR14b}, and \eqref{EL4.4D} follows from
\cite[Rem.~4.8]{SL}. Note that concavity and monotonicity of $\Psi$ give
$$
\Psi(u)\leq \Psi(\delta u)\leq \delta \Psi(u), \quad  u>0, \; \delta\geq 1.
$$
Therefore, for every $\kappa > 0$ we have
\begin{equation}\label{EL4.4E}
\kappa\wedge 1\leq \inf_{u\in(0, \infty)} \frac{\Psi(\kappa u)}{\Psi(u)}\leq \sup_{u\in(0, \infty)}
\frac{\Psi(\kappa u)}{\Psi(u)}\leq 1\vee \kappa.
\end{equation}
Due to Assumption~\ref{WLSC}(i), for $\kappa\geq 1$ we have
\begin{equation}\label{Ex1}
\sup_{u\in(0, \infty)} \frac{\Psi(u)}{\Psi(\kappa u)}\leq \frac{1}{\underline{c}}\kappa^{-\underline\mu}.
\end{equation}
The proof of part (i) is based on \cite{GR14}. From \cite[Cor.~1.8]{Mimica} it is known that
\begin{equation}\label{Ex2}
G(x, y)=G(\abs{x-y})\asymp \frac{1}{\abs{x-y}^d \Psi(\abs{x-y}^{-2})}, \quad x\neq y,
\end{equation}
where $G$ is the Green function of $\pro X$ in $\Rd$. Denote by $G_{\cD}$ the Green function of the killed
process in $\cD$. Then by \eqref{Ex1}, \eqref{Ex2}, and a similar reasoning as in \cite[Prop.~4]{GR14} we obtain
\begin{equation}\label{Ex3}
G_{\sB_r(0)}(x, y)\geq\; \frac{1}{2} G(\abs{x-y}), \quad \text{whenever}\quad L\abs{x-y}\leq (r-\abs{x})\vee(r-\abs{y}),
\end{equation}
for a constant $L = L(d, \underline\mu, \underline{c}) >1$. Using \eqref{Ex3}, similarly to the argument in
\cite[Prop.~7]{GR14} we get that for large enough $R>r$
$$
\Prob^x(\Breve\uptau_1<\uptau_{\sB_R(0)})\geq \kappa_1 G(2 r) \mathrm{Cap}(\sB_1(0)),
$$
where $\mathrm{Cap}(\sB_1(0))$ denotes the capacity of $\sB_1(0)$, and $\kappa_1$ is a positive constant
independent of $R$. Thus by \cite[Prop.~3]{GR14} and \eqref{Ex2} we have
$$
\Prob^x(\breve\uptau_1<\uptau_{\sB_R(0)}) \geq
\kappa_2 \frac{\abs{\sB_1(0)} \Psi(\abs{\sB_1(0)}^{-\frac{2}{d}})}{r^d\Psi([2r]^{-2})},
$$
with a constant $\kappa_2$. Therefore, using \eqref{EL4.4E} and letting $R\to\infty$ in the above, we get
$$
\Prob^x(\breve\uptau_1<\infty) \geq C_1 \frac{1}{r^d\Psi(r^{-2})},\quad x\in \sB_r(0),
$$
with a constant $C_1$. This gives \eqref{EL4.4A}.

To show (ii), we consider the stopping time $\Breve\uptau_1$, the first hitting time of the ball $\sB_1(0)$.
Since $(\varphi(X_t))_{t\geq 0}$ is a non-negative super-martingale (see Theorem~\ref{T1.2} above), it is
easily seen that
$$
\varphi(x)\geq \ex^x[\varphi(X_{t\wedge\breve\uptau_{1}})], \quad t>0\,.
$$
Letting $t\to\infty$ and applying Fatou's lemma, we conclude that for $\abs{x}>1$
\begin{equation}\label{EL4.4A2}
\varphi(x) \geq \ex^x[\varphi(X_{\breve\uptau_{1}})\Ind_{\{\breve\uptau_1<\infty\}}]
\geq \sM(1) \Prob^x(\breve\uptau_1<\infty).
\end{equation}
Combining \eqref{EL4.4A2} with \eqref{EL4.4A}, we obtain \eqref{EL4.4A1}.

Finally, we prove part (iii). We split the proof into two cases. Suppose that $r<\abs{x}\leq \frac{3r}{2}$.
Consider the ball $\sB_{r/2}(x)$ of radius $\frac{r}{2}$ around $x$. Since $\varphi$ is a super-solution, we
have
$$
\varphi(x)\geq \ex^x[\varphi(X_{t\wedge\uptau_{\sB_{r/2}(x)}})], \quad  t>0.
$$
Hence by letting $t\to\infty$ and applying Fatou's lemma we obtain
$$
\varphi(x)\geq \ex^x[\varphi(X_{\uptau_{\sB_{r/2}(x)}})].
$$
By using the Ikeda-Watanabe formula \eqref{IW} we obtain
\begin{align*}
\varphi(x) &\geq \ex^x[\varphi(X_{\uptau_{\sB_{r/2}(x)}}) \Ind_{\sB_{r/2}(0)}(X_{\uptau_{\sB_{r/2}(x)}})]
\\
&\geq
\sM\left(\frac{r}{2}\right)\, \ex^x[\Ind_{\sB_{\frac{r}{2}}(0)}(X_{\uptau_{\sB_{r/2}(x)}})] \, = \,
\sM\left(\frac{r}{2}\right)\, P_{\sB_{r/2}(x)}(x, \sB_{r/2}(0))
\\
&\geq \kappa_1 \sM(r)\, \int_{\sB_{r/2}(x)}\int_{\sB_{r/2}(0)}\frac{\Psi(\abs{z-y}^{-2})}{\abs{z-y}^d}\, \D{z}
\, G_{\sB_{r/2}(x)}(x, \D{y})
\\
&\geq \kappa_1 \sM(r)\, \frac{\Psi(\abs{3r}^{-2})}{\abs{3r}^d}\abs{\sB_{r/2}(0)}
\int_{\sB_{r/2}(x)}\, G_{\sB_{r/2}(x)}(x, \D{y})
\\
&\geq \kappa_1 \sM(r) \frac{\abs{\sB_{r/2}(0)}}{\abs{3r}^d}\Psi(\abs{3r}^{-2}) \ex^{x}[\uptau_{\sB_{r/2}(x)}]
\\
&\geq \kappa_2 \sM(r) \frac{\abs{\sB_{r/2}(0)}}{\abs{3r}^d}\Psi(\abs{3r}^{-2}) \frac{1}{\Psi(\abs{\frac{r}{2}}^{-2})}
\\
&\geq \kappa_3 \sM(r),
\end{align*}
where in the third line we used monotonicity of $\sM$ and \eqref{EL4.4C}, and \eqref{EL4.4E} in the last line. For
the second case, assume $\frac{3r}{2}<\abs{x}\leq 2r$ and consider a ball of radius $\frac{r}{2}$ around $x$, and
a ball of radius $r$ around $0$. Then the proof of \eqref{EL4.4B} can be completed by repeating the same argument
as for the first case.
\end{proof}

Now we complete the proof of Theorem~\ref{T4.9}.
\begin{proof}[Proof of Theorem~\ref{T4.9}]
Let $r>0$  be large enough, and consider a point $\abs{x}\leq r$. Since $\varphi$ is a non-negative super-solution
of \eqref{ET4.9A}, we have
\begin{equation}\label{ET4.9B}
\varphi(x)\geq \ex^x[\varphi(X_{t\wedge\uptau_{\sB_r(x)}})] +
\ex^x\left[\int_0^{t\wedge\uptau_{\sB_r(x)}} \varphi^p(X_s)\, \D{s}\right], \quad t>0.
\end{equation}
Since $\varphi$ in non-negative, by letting $r\to\infty$ in \eqref{ET4.9B} we obtain
$$
\varphi(x)\geq \ex^x[\varphi(X_{t})] + \ex^x\left[\int_0^{t} \varphi^p(X_s)\, \D{s}\right], \quad t>0.
$$
Since $\pro X$ has a positive transition density, the above implies either $\varphi= 0$ or $\varphi>0$ in $\Rd$.
We show that the second possibility cannot occur. Suppose that $\varphi>0$. Then we have $\sM(r)>0$ for
$r>0$. Letting $t\to\infty$ in \eqref{ET4.9B}, we obtain
$$
\varphi(x)\geq  \ex^x\left[\int_0^{\uptau_{\sB_r(x)}} \varphi^p(X_s)\, \D{s}\right].
$$
This then implies
$$
\varphi(x)\geq (\sM(2r))^p \ex^x[\uptau_{\uptau_{\sB_r(x)}}]\geq \kappa_1 (\sM(2r))^p \frac{1}{\Psi(r^{-2})},
\quad \abs{x}\leq r,
$$
with a constant $\kappa_1$, where the last estimate follows from \eqref{EL4.4D}. Thus we have
$$
\kappa_1 (\sM(2r))^p \frac{1}{\Psi(r^{-2})} \leq \sM(r) \leq C_2 \sM(2r),
$$
which, in turn, gives
\begin{equation}\label{ET4.9C}
\sM(2r) \leq \kappa_2 \Psi(r^{-2})^{\frac{1}{p-1}}.
\end{equation}
Applying \eqref{EL4.4A1} to \eqref{ET4.9C} we get
\begin{equation}\label{ET4.9D}
r^{-d}\lesssim \Psi(r^{-2})^{\frac{p}{p-1}}, \quad \text{for all large}\; r.
\end{equation}
Since $\underline{\theta}=0$, in virtue of Assumption~\ref{WLSC}(i) we have that
\begin{equation}\label{ET5.40}
\underline{c}\,\Psi(r^{-2}) r^{2\underline\mu}\leq \Psi(1).
\end{equation}
Thus by using \eqref{ET4.9D} we get
$$
r^{-d} \lesssim r^{-\frac{2p\underline\mu}{p-1}}.
$$
However, this is not possible due to the fact that $d<\frac{2p\underline\mu}{p-1}$, i.e., $p <
\frac{d}{d-2\underline\mu}$. Thus necessarily $\sM(1)=0$, which contradicts the fact that $\varphi>0$,
and hence $\varphi=0$ in $\Rd$.
\end{proof}
We note that for the specific case $\Psi(u) = u$ and $\mu = \bar\mu = 1$, when the problem reduces to super-solutions of
a PDE featuring the Laplacian, and choosing $d \geq 3$, Theorem \ref{T4.9} covers the range $p \in (1, \frac{d}{d-2})$,
which can be compared with \cite{GS}, where solutions are analyzed. Also, recently in \cite[Th.~1.3]{FQ11} a similar range
of $p$ has been obtained for a more restricted class of operators (fractional Laplacian type operators), in particular,
corresponding to $\underline\mu =\bar\mu$.

We conclude with a further application of the technique developed in Theorem~\ref{T4.9} to Lane-Emden systems featuring
general non-local operators. This involves a system of coupled positive entire super-solutions. For some related results
involving the fractional Laplacian we refer to \cite{DKK, QX16}.

\begin{theorem}\label{T5.4}
Let $\Psi_1, \Psi_2 \in \cB_0$ be different Bernstein functions satisfying Assumptions~\ref{WLSC}(i) and \ref{WLSC}(ii)
with parameters $\underline{\mu}_i, \underline{c}_i > 0$, $\underline{\theta}_i=0$ and $\bar{\mu}_i, \bar{c}_i > 0$,
$\bar{\theta}_i=0$, $i=1,2$, respectively. Furthermore, assume that corresponding processes $\pro{X^1}$ and $\pro{X^2}$
satisfy the assumption of Theorem~\ref{T4.9}. Then there exists no positive super-solution to the system of equations
\begin{equation}\label{ET5.4A}
\begin{cases}
\Psi_1(-\Delta)\varphi_1 &\geq \; \varphi_2^p \quad \mbox{in}\;\; \Rd,
\\
\Psi_2(-\Delta)\varphi_2 &\geq \; \varphi_1^q \quad \mbox{in}\;\; \Rd,
\end{cases}
\end{equation}
whenever
\begin{equation}\label{ER5.2A}
\frac{2q\underline{\mu}_2+ 2pq\underline{\mu}_1}{pq-1} \,\vee\, \frac{2p\underline{\mu}_1+ 2pq\underline{\mu}_2}{pq-1} \;
> \; d.
\end{equation}
\end{theorem}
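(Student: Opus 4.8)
The plan is to run the rescaling/bootstrap scheme of Theorem~\ref{T4.9} in coupled form for the two components, applying Lemma~\ref{L4.4} to each Bernstein function separately. Assume, to the contrary, that $(\varphi_1,\varphi_2)$ is a positive super-solution of \eqref{ET5.4A}, and for $i=1,2$ put $\sM_i(r)=\min_{\abs{x}\le r}\varphi_i(x)$; by positivity and continuity, $\sM_i(r)>0$ for every $r>0$. Write $\pro{X^i}$ for the subordinate Brownian motion generated by $-\Psi_i(-\Delta)$ and $\uptau^i_{\sB_r(x)}$ for its first exit time from $\sB_r(x)$. Since each $\varphi_i$ is in particular a non-negative weak super-solution of $\Psi_i(-\Delta)\varphi_i\ge 0$ and, by hypothesis, $\Psi_i$ satisfies the assumptions of Theorem~\ref{T4.9}, Lemma~\ref{L4.4} applies to $(\Psi_i,\varphi_i)$ and supplies, for all large $r$, the lower bound \eqref{EL4.4A1} in the form $\sM_i(r)\gtrsim \sM_i(1)\,\bigl(r^d\Psi_i(r^{-2})\bigr)^{-1}$ and the minimum-doubling inequality \eqref{EL4.4B} in the form $\sM_i(r)\le C\,\sM_i(2r)$.

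First I would extract two coupled lower bounds, one from each equation. Fixing $\abs{x}\le r$ so that $\sB_r(x)\subset\sB_{2r}(0)$, the super-solution property for the first equation, applied on the ball $\sB_r(x)$ and followed by monotone convergence as the time parameter tends to infinity, gives
$$
\varphi_1(x)\;\ge\;\ex^x\Bigl[\int_0^{\uptau^1_{\sB_r(x)}}\varphi_2^p(X^1_s)\,\D s\Bigr]
\;\ge\;\sM_2(2r)^p\,\ex^x\bigl[\uptau^1_{\sB_r(x)}\bigr]\;\gtrsim\;\frac{\sM_2(2r)^p}{\Psi_1(r^{-2})}\,,
$$
the last step by \eqref{EL4.4D} for $\Psi_1$; taking the minimum over $\abs{x}\le r$ gives $\sM_1(r)\gtrsim \sM_2(2r)^p/\Psi_1(r^{-2})$, and symmetrically $\sM_2(r)\gtrsim \sM_1(2r)^q/\Psi_2(r^{-2})$. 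Substituting the second bound (at radius $2r$) into the first, using $\Psi_i((2r)^{-2})\asymp\Psi_i(r^{-2})$ (concavity and monotonicity of $\Psi_i$, cf.\ \eqref{EL4.4E}), and then applying \eqref{EL4.4B} twice in the form $\sM_1(4r)\gtrsim\sM_1(r)$ to close the loop, I obtain $\sM_1(r)^{pq-1}\lesssim \Psi_1(r^{-2})\,\Psi_2(r^{-2})^p$; interchanging the roles of the two equations gives $\sM_2(r)^{pq-1}\lesssim \Psi_2(r^{-2})\,\Psi_1(r^{-2})^q$.

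Next I would feed these upper bounds back into \eqref{EL4.4A1}. Since $\sM_i(1)$ is a fixed positive constant, combining $\sM_1(r)\gtrsim\bigl(r^d\Psi_1(r^{-2})\bigr)^{-1}$ with $\sM_1(r)\lesssim\bigl(\Psi_1(r^{-2})\Psi_2(r^{-2})^p\bigr)^{1/(pq-1)}$ yields, for all large $r$, a power inequality $r^{-d}\lesssim \Psi_1(r^{-2})^{a}\,\Psi_2(r^{-2})^{b}$ with explicit positive exponents $a,b$; since $\underline\theta_i=0$, Assumption~\ref{WLSC}(i) gives $\Psi_i(r^{-2})\lesssim r^{-2\underline\mu_i}$ for $r\ge 1$, exactly as in \eqref{ET5.40}, and inserting this turns the right-hand side into $\lesssim r^{-\beta_1}$ for a $\beta_1$ which, once the exponents are collected, is one of the two quantities in \eqref{ER5.2A}. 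Letting $r\to\infty$ then forces $\beta_1\le d$, and the computation run symmetrically from $\varphi_2$ forces the other quantity in \eqref{ER5.2A} to be $\le d$ as well. Under hypothesis \eqref{ER5.2A} at least one of them is $>d$, so in either case we arrive at a contradiction, and no positive super-solution of \eqref{ET5.4A} can exist.

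The step I expect to be the main obstacle is the coupled bootstrap of the previous paragraph: one must keep careful track of the dyadic radii and of the mismatch between $\Psi_i(r^{-2})$ and $\Psi_i((2r)^{-2})$ so that the minimum-doubling inequality of Lemma~\ref{L4.4} genuinely closes the iteration after finitely many rounds rather than leaving an estimate that weakens each time, and one must also check that the hypotheses of Lemma~\ref{L4.4}, notably transience of $\pro{X^i}$ and $\bar\mu_i<\tfrac d2\wedge 1$ (cf.\ Remark~\ref{R5.1}), do hold for each $\Psi_i$. The scaling comparisons \eqref{EL4.4E} and the potential-theoretic estimates already established in the proof of Lemma~\ref{L4.4} are exactly what make the remaining bookkeeping routine.
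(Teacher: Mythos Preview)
Your proposal is correct and follows essentially the same route as the paper's own proof: both derive the coupled inequalities $\sM_1(r)\gtrsim \Psi_1(r^{-2})^{-1}(\sM_2(2r))^p$ and $\sM_2(r)\gtrsim \Psi_2(r^{-2})^{-1}(\sM_1(2r))^q$ from the super-solution property and \eqref{EL4.4D}, iterate them using the minimum-doubling estimate \eqref{EL4.4B}, and then confront the resulting upper bound on $\sM_i(r)$ with the lower bound \eqref{EL4.4A1} and the WLSC estimate \eqref{ET5.40} to force a contradiction with \eqref{ER5.2A}. The only cosmetic difference is that the paper assumes without loss of generality that the first alternative in \eqref{ER5.2A} holds and treats only that case, whereas you carry both branches; also be aware that the paper's displayed chain records the exponent on $\Psi_2$ as $q$ where your substitution (and the straightforward algebra) gives $p$, so when you actually write out the final exponent comparison you should double-check which of the two expressions in \eqref{ER5.2A} your branch produces.
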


\begin{proof}
First note that either both $\varphi_1, \varphi_2$ are identically zero or positive in $\Rd$. Proceeding by
contradiction, we assume that both super-solutions are positive. Also, without loss of generality we assume that
\begin{equation}\label{ET5.4B}
\frac{2q\underline{\mu}_2+ 2pq\underline{\mu}_1}{pq-1} > d.
\end{equation}
Define $\sM_i(r)=\min_{\abs{x}\leq r}\varphi_i(x)$ for $i=1,2$. From the proof of Theorem~\ref{T4.9} we note that
\begin{equation}\label{ET5.4C}
\sM_1(r)\gtrsim \frac{1}{\Psi_1(r^{-2})} (\sM_2(2r))^p, \quad \sM_2(r)\gtrsim \frac{1}{\Psi_2(r^{-2})} (\sM_1(2r))^q\,,
\end{equation}
for $r>2$. Hence,
\begin{align*}
\sM_1(r) \gtrsim \frac{1}{\Psi_1(r^{-2})} (\sM_2(2r))^p
&\gtrsim \frac{1}{\Psi_1(r^{-2})} (\sM_2(r))^p
\\
&\gtrsim \frac{1}{\Psi_1(r^{-2})} \left[\frac{1}{\Psi_2(r^{-2})}\right]^q (\sM_1(2r))^{pq}
\\
&\gtrsim \frac{1}{\Psi_1(r^{-2})} \left[\frac{1}{\Psi_2(r^{-2})}\right]^q (\sM_1(r))^{pq},
\end{align*}
where in the first and third lines we used Lemma~\ref{L4.4}(iii), and \eqref{ET5.4C} in the second line.
This implies
\begin{equation}\label{ET5.4D}
\sM_1(r)\lesssim \left[\Psi_1(r^{-2}) \bigl(\Psi_2(r^{-2})\bigr)^q\right]^{\frac{1}{pq-1}}, \quad r>2.
\end{equation}
Thus using Lemma~\ref{L4.4}(ii) and \eqref{ET5.40} we obtain
$$
r^{-d}\lesssim r^{-\frac{2pq\underline{\mu}_1}{pq-1}} r^{-\frac{2q\underline{\mu}_2}{pq-1}}.
$$
This contradicts \eqref{ET5.4B}, hence there is no positive pair of super-solutions of \eqref{ET5.4A}.
\end{proof}

\begin{remark}
Condition \eqref{ER5.2A} is similar to the one obtained in \cite[Th.~2]{DKK} for fractional Laplace operators.
It should be noted that \cite{DKK} deals with weak solutions of fractional Laplacians, whereas our result deals
with super-solutions for a much larger class of operators.
\end{remark}

\begin{remark}
It is also easily seen that the conclusion of Theorem~\ref{T5.4} continues to hold if we replace \eqref{ET5.4A}
by a more general class of equations
\begin{equation*}
\begin{cases}
\Psi_1(-\Delta)\varphi_1 &\geq \; f(\varphi_2) \quad \mbox{in}\;\; \Rd,
\\
\Psi_2(-\Delta)\varphi_2 &\geq \; g(\varphi_1) \quad \mbox{in}\;\; \Rd,
\end{cases}
\end{equation*}
with $f(u)\gtrsim u^p$ and $g(u)\gtrsim u^q$.
\end{remark}

\bigskip
\subsection*{Acknowledgments}
\noindent
We are pleased to thank Xavier Cabr\'e for useful comments.
This research of AB was supported in part by an INSPIRE faculty fellowship and a DST-SERB grant EMR/2016/004810.
\bibliographystyle{abbrv}
\bibliography{BL}

\end{document}